\theoremstyle{definition}
  \newtheorem{definition}[subsection]{Definition}
  \newtheorem{definition-proposition}[subsection]{Definition-Proposition}
  \newtheorem{example}[subsection]{Example}
  \newtheorem{remark}[subsection]{Remark}
\theoremstyle{theorem}
  \newtheorem{theorem}[subsection]{Theorem}
  \newtheorem{proposition}[subsection]{Proposition}
  \newtheorem*{proposition*}{Proposition}
  \newtheorem{lemma}[subsection]{Lemma}
  \newtheorem{corollary}[subsection]{Corollary}
  \newtheorem{conjecture}[subsection]{Conjecture}
  \newtheorem{assumption}[subsection]{Assumption}
\newcommand{\ncm}{\mathrm{ncm}}
\newcommand{\rhorig}{{$\rho$-rigid}\xspace}
\newcommand{\adele}{\mathbb{A}_\mathrm{f}}
\newcommand{\Abb}{\mathbb{A}}
\newcommand{\Cbb}{\mathbb{C}}
\newcommand{\Gbb}{\mathbb{G}}
\newcommand{\Nbb}{\mathbb{N}}
\newcommand{\Qbb}{\mathbb{Q}}
\newcommand{\Rbb}{\mathbb{R}}
\newcommand{\Sbb}{\mathbb{S}}
\newcommand{\Tbb}{\mathbb{T}}
\newcommand{\Zbb}{\mathbb{Z}}
\newcommand{\Zbhat}{\hat{\mathbb{Z}}}
\newcommand{\Cbf}{\mathbf{C}}
\newcommand{\Hbf}{\mathbf{H}}
\newcommand{\Gbf}{\mathbf{G}}
\newcommand{\ibf}{\mathbf{i}}
\newcommand{\Lbf}{\mathbf{L}}
\newcommand{\Mbf}{\mathbf{M}}
\newcommand{\Nbf}{\mathbf{N}}
\newcommand{\Pbf}{\mathbf{P}}
\newcommand{\Tbf}{\mathbf{T}}
\newcommand{\Ubf}{\mathbf{U}}
\newcommand{\Vbf}{\mathbf{V}}
\newcommand{\Wbf}{\mathbf{W}}
\newcommand{\Xbf}{\mathbf{X}}
\newcommand{\GLbf}{\mathbf{GL}}
\newcommand{\Sfrak}{\mathfrak{S}}
\newcommand{\Tfrak}{\mathfrak{T}}
\newcommand{\Xfrak}{\mathfrak{X}}
\newcommand{\Yfrak}{\mathfrak{Y}}
\newcommand{\mrm}{\mathrm{m}}
\newcommand{\Hscr}{\mathscr{H}}
\newcommand{\Pscr}{\mathscr{P}}
\newcommand{\Sscr}{\mathscr{S}}
\newcommand{\Fcal}{\mathcal{F}}
\newcommand{\Pcal}{\mathcal{P}}
\newcommand{\Ucal}{\mathcal{U}}
\newcommand{\Ycal}{\mathcal{Y}}
\newcommand{\ab}{\mathrm{ab}}
\newcommand{\der}{\mathrm{der}}
\newcommand{\ra}{\rightarrow}
\newcommand{\lra}{\longrightarrow}
\newcommand{\mono}{\hookrightarrow}
\newcommand{\isom}{\cong}
\newcommand{\Hom}{\mathrm{Hom}}
\newcommand{\Group}{\mathrm{Group}}
\newcommand{\GSp}{\mathrm{GSp}}
\newcommand{\Sp}{\mathrm{Sp}}
\newcommand{\Ker}{\mathrm{Ker}}
\newcommand{\Nm}{\mathrm{Nm}}
\newcommand{\Res}{\mathrm{Res}}
\newcommand{\Supp}{\mathrm{Supp}}
\newcommand{\spl}{\mathrm{spl}}
\newcommand{\bsh}{\backslash}
\newcommand{\inv}{{-1}}
\newcommand{\ot}{\overset}
\newcommand{\pr}{\mathrm{pr}}
\newcommand{\id}{\mathrm{id}}
\newcommand{\wrt}{{with\ respect\ to}\xspace}
\newcommand{\ifof}{{if\ and\ only\ if}\xspace}
\newcommand{\cosg}{{compact\ open\ subgroup}\xspace}
\newcommand{\cosgs}{{compact\ open\ subgroups}\xspace}
\newcommand{\Qac}{{\overline{\mathbb{Q}}}}
\newcommand{\Gal}{\mathrm{Gal}}
\newcommand{\Gr}{\mathrm{Gr}}
\newcommand{\Cok}{\mathrm{Cok}}
\newcommand{\pitilde}{\tilde{\pi}}
\newcommand{\rec}{\mathrm{rec}}
\newcommand{\limproj}{\varprojlim}
\newcommand{\limind}{\varinjlim}
\newcommand{\Aut}{\mathrm{Aut}}
\newcommand{\rig}{\mathrm{rig}}
\newcommand{\cm}{\mathrm{cm}}
\newcommand{\Oscr}{\mathscr{O}}
\newcommand{\Acal}{\mathcal{A}}
\newcommand{\GL}{\mathrm{GL}}
\newcommand{\tor}{\mathrm{tor}}
\title{On Special Subvarieties of Kuga Varieties II}
\author{Ke Chen}
\address{Institut f\"ur Mathematik, Johannes Gutenberg Universit\"at Mainz, 55099 Mainz, Deutchland }
\email{chenk@uni-mainz.de}
\subjclass{Primary 14G35(11G18), Secondary 14K05}
\keywords{Shimura varieties, Kuga varieties, Andr\'e-Oort conjecture, special subvarieties, Diophantine approximation, equidistribution}
\begin{document}

\begin{abstract}  In this paper we prove the equidistribution of $\Cbf$-special subvarieties in certain Kuga varieties, which implies a special case of the general Andr\'e-Oort conjecture formulated for mixed Shimura varieties proposed by R.Pink. The main idea is to reduce the equidistribution to a theorem of Szpiro-Ullmo-Zhang on small points of abelian varieties and a theorem on the equiditribution of $C$-special subvarieties of Kuga varieties of rigid type treated by the author in a previous paper.
\end{abstract}

\maketitle
\tableofcontents
\section*{Introduction}

As is pointed out in \cite{pink-combination}, the Andr\'e-Oort conjecture for mixed Shimura varieties provides a common generalization of the Manin-Mumford conjecture for abelian varieties and the Andr\'e-Oort conjecture for pure Shimura varieties. The present paper is aimed to prove a special case of the equidistribution conjecture, which refines the Andr\'e-Oort conjecture with measure-theoretic tools. Recall that (cf.\cite{chen-rigid} 1.9 (2)) a connected Kuga variety carries a canonical probability measure, and so it is with its special subvarieties. The equidistribution conjecture for Kuga varieties is formulated as:

\begin{conjecture}\label{equidistribution-conjecture} Let $M$ be a Kuga variety, and $(M_n)_n$ a sequence of special subvarieties in $M$, strict in the sense that for any $M'\subsetneq M$ special subvariety, $M_n\nsubseteq M$ for $n$ large enough. Fix $F\subset\Cbb$ a number field over which $M$ admits a canonical model, and for each $n$ put $\Oscr_n$ the $\Gal(\Qac/F)$-orbit of $M_n$ with $$\mu_n:=\dfrac{1}{\#\Oscr_n}\sum_{Z\in\Oscr_n}\mu_Z$$ with $\mu_Z$ the canonical probability measure on $Z(\Cbb)$. Then $(\mu_n)_n$ converges to the canonical probability measure on $M(\Cbb)$ for the weak-* topology.
\end{conjecture}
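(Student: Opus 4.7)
My plan is to decompose the question along the canonical projection of $M$ onto its pure base. Concretely, a Kuga variety $M$ carries a structure morphism $\pi:M\ra S$ whose target $S$ is a pure (connected) Shimura variety and whose geometric fibers are abelian varieties. Any special subvariety $M_n\subset M$ projects to a special subvariety $S_n:=\pi(M_n)\subset S$, and sits inside $\pi^{\inv}(S_n)$ as the translate, by a torsion section, of an abelian subscheme. After extracting a subsequence, I may assume that one of the following two situations holds: either (i) the sequence $(S_n)_n$ is itself strict in $S$, or (ii) all $S_n$ are contained in a fixed proper pure special subvariety $S'\subsetneq S$. Case (ii) reduces the problem to the Kuga subvariety $\pi^{\inv}(S')\subsetneq M$, where we argue by induction on $\dim M$; so the substantive case is (i).

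Under (i), the equidistribution of Clozel--Ullmo on the pure base already handles the pushforward $\pi_{*}\mu_n\ra\mu_{S(\Cbb)}$, and it remains to control the fiberwise behavior of $\mu_n$. For this I would further decompose $M_n$ into its \emph{rigid part} (the abelian subscheme) and its \emph{torsion part} (the torsion section translating it). If, along a subsequence, the torsion part is trivial or already absorbed into the rigid part, the equidistribution is precisely the main theorem of \cite{chen-rigid} for Kuga varieties of rigid type. In the remaining sub-case the rigid parts can be arranged to stabilize, and the essential variation lies in the torsion sections; over a chosen fiber these become torsion points of an abelian variety whose orders necessarily tend to infinity, because otherwise the $(M_n)_n$ would accumulate in a fixed special subvariety, contradicting strictness. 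The Szpiro--Ullmo--Zhang theorem on small points then delivers equidistribution of the Galois orbits of these torsion sections in each geometric fiber, and integrating this fiberwise equidistribution against the (already controlled) base measure yields the sought weak-$*$ convergence of $\mu_n$ on $M(\Cbb)$.

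The principal difficulty I anticipate is the compatibility of the Galois actions: I must show that the orbit $\Oscr_n$ of $M_n$ over $F$ is, up to bounded multiplicity, fibered over the Galois orbit of the pure part $S_n$, with fiber the Galois orbit of the torsion section. This comparison rests on Pink's reciprocity laws for canonical models of mixed Shimura varieties, and must be tracked carefully enough that the Clozel--Ullmo convergence on $S$ and the Szpiro--Ullmo--Zhang convergence on fibers are synchronized along the same subsequence. A secondary technicality is the correct expression of $\mu_n$ as an integral of fiber measures against the base measure; once this is in place the two convergences combine, by a standard Fubini-and-partition-of-unity argument, into the single statement on $M(\Cbb)$ claimed by the conjecture.
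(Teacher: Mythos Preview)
The statement you are attempting is stated in the paper as a \emph{conjecture}, not a theorem; the paper does not prove it. What the paper does prove is the restricted Theorem~\ref{main-theorem}: equidistribution for sequences of $\Cbf$-special subvarieties (those whose pure part has a \emph{fixed} connected center $\Cbf$), under the further hypothesis that the rigid factor of $M$ is strongly rigid. Your sketch targets the full conjecture, and already at the first step there is a genuine gap: the Clozel--Ullmo theorem you invoke for the pure base $S$ establishes equidistribution only for $\Cbf$-special (``strongly special'') subvarieties, not for arbitrary strict sequences of special subvarieties in a pure Shimura variety. The general pure equidistribution is open, so your case~(i) cannot be concluded from \cite{clozel-ullmo} alone; the same restriction applies to the main theorem of \cite{chen-rigid}, which likewise treats only $\Cbf$-special subvarieties.

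Even restricting to the $\Cbf$-special setting, your approach differs structurally from the paper's, and the differences matter. You work with the fibration $\pi:M\ra S$ and propose to combine base and fiber equidistribution by a Fubini-type disintegration. The paper instead passes (up to level) to a \emph{product} decomposition $M\cong M^{\cm}\times M^{\rig}\times S'$ (Corollary~\ref{triple-product-decomposition}) and proves that every $\Cbf$-special subvariety is itself a product of special subvarieties in the three factors (Lemma~\ref{subdatum-compatibility}). This converts the problem into three separate equidistribution questions on fixed ambient spaces, avoiding the difficulty that in your picture the fibers over $S_n$ vary with $n$, so that Szpiro--Ullmo--Zhang is being applied to a moving family of abelian varieties rather than a single one. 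The key remaining point, which you correctly flag as the ``compatibility of Galois actions'' but do not resolve, is handled in the paper via Ribet's theorem (Theorem~\ref{ribet-finiteness}): the finiteness of $A(F\Qbb^{\ab})_{\tor}$ for an abelian variety $A/F$ forces the Galois orbits on $M^{\cm}$ and on $M^{\rig}\times S'$ to be essentially independent, so the averaged measure on $M$ factors (up to bounded error) as a tensor product of the factor measures. Without the product decomposition and Ribet's input, your fibration argument lacks a mechanism to synchronize the two convergences.
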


The prototype of Kuga varieties is the pull-back of the universal family $\Ucal_g$ of abelian varieties over $\Acal_g$ a Siegel moduli variety (with suitable level structure) along a morphism of pure Shimura varieties $S\ra \Acal_g$. When we pull back the universal family to a special point (i.e. CM point) of $\Acal_g$, the resulting Kuga variety is nothing but the corresponding CM abelian variety. In this case the equidistribution conjecture is known, because special subvarieties in such CM abelian varieties are the same as torsion subvarieties, i.e. translation of abelian subvarieties by torsion points, and even the assumption of CM type is not necessary:

\begin{theorem}\label{ullmo-icm}

Let $F\subset\Cbb$ be a number field, and $A$ an abelian variety over $F$. Take $(A_n)_n$ a sequence of torsion subvarieties, strict in the sense that for any torsion subvariety $A'\subsetneq A$, we have $A_n\nsubseteq A'$ for $n$ large enough. Put $\Oscr_n$ the $\Gal(\Qac/F)$-orbit of $A_n$, and $\mu_n$ the average over $\Oscr_n$ of the canonical measures of $Z$ in $\Oscr_n$. Then $\mu_n$ converges to the canonical measure on $A(\Cbb)$ for the weak-* topology.
\end{theorem}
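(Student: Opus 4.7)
My plan is to derive this from the equidistribution theorem of Szpiro--Ullmo--Zhang on small points (or more precisely Zhang's extension to small subvarieties), bridged to the present setting by Raynaud's theorem (Manin--Mumford).

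First, I would observe that each $A_n$ is \emph{small} in the sense of Arakelov geometry. Fix any symmetric ample line bundle $L$ on $A$ with associated N\'eron--Tate height $\hat{h}_L$. Writing $A_n = B_n + \tau_n$ with $B_n$ an abelian subvariety and $\tau_n$ a torsion point of some order $M$, for each positive integer $N$ divisible by $M$ the finite subset $B_n[N]+\tau_n$ consists of torsion points of $\hat{h}_L$-height zero and becomes Zariski dense in $A_n$ as $N$ grows. Consequently the essential minimum $e_1(A_n):=\sup_U\inf_{x\in U}\hat{h}_L(x)$, with $U$ ranging over nonempty Zariski opens of $A_n$, vanishes.

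Next, I would upgrade the strictness hypothesis stated in the theorem (no fixed proper \emph{torsion} subvariety eventually contains $A_n$) to the strictness needed by Zhang (no fixed proper \emph{closed} subvariety eventually contains $A_n$). Suppose for contradiction that some proper closed subvariety $Y \subsetneq A$ contains $A_n$ for infinitely many $n$. Since each such $A_n$ is the Zariski closure of its torsion locus, its $\Qac$-torsion points lie in $Y$. Raynaud's theorem furnishes a finite list of torsion subvarieties $Y_1,\ldots,Y_r \subsetneq A$ whose union is the Zariski closure of $Y(\Qac)_{\tor}$; irreducibility of $A_n$ forces $A_n \subset Y_{i(n)}$ for some index $i(n)$, and pigeonhole produces a fixed $Y_i$ containing infinitely many $A_n$, contradicting strictness.

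With these two inputs in place, the conclusion follows from Zhang's equidistribution theorem for small subvarieties of abelian varieties: for any strict sequence of closed subvarieties of $A$ with essential minima tending to zero, the averaged canonical measures over Galois orbits converge weakly on $A(\Cbb)$ to the Haar probability measure. The main obstacle is packaged into this last step, whose proof --- via the arithmetic Hilbert--Samuel theorem and Zhang's inequality bounding the essential minimum of a subvariety by its normalized height --- rests on Arakelov-theoretic machinery beyond the scope of an elementary argument.
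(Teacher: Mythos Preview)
The paper does not supply its own proof of this theorem: it is quoted in the Introduction as a known result, with implicit attribution to Szpiro--Ullmo--Zhang and to Ullmo's survey (references \cite{szpiro-ullmo-zhang} and \cite{ullmo-icm}). Your sketch is therefore not competing against an argument in the paper but rather reconstructing the content of those citations, and it does so correctly.

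Your three-step outline is sound. The observation that torsion subvarieties have vanishing essential minimum is immediate, and your use of Raynaud's theorem to upgrade ``strict among torsion subvarieties'' to ``strict among all closed subvarieties'' is the right bridge; the pigeonhole step is clean because each $Y_i$ produced by Raynaud sits inside $Y\subsetneq A$ and is therefore proper. The final appeal is indeed to the equidistribution of small subvarieties rather than merely small points, so be sure you are invoking Zhang's extension (height of a subvariety, Zhang's inequality, and the resulting equidistribution of the canonical measures over Galois orbits) and not just the original Szpiro--Ullmo--Zhang statement for sequences of points; you acknowledge this distinction in your last paragraph, which is good.
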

Here the canonical measure for a complex abelian variety is just the Haar measure on the underlying compact tori, and the canonical measure for a torsion subvariety is obtained by translation from the Haar measure on the corresponding abelian subvariety.

On the other hand, in a previous paper \cite{chen-rigid} we have prove the weak-* compactness of canonical measures associated to \rhorig $\Cbf$-sepcial subvarieties of Kuga varieties. The notion of $\rho$-rigidity is to rule out the case of CM abelian variety as much as possible. In the present paper we combine these two results into:

\begin{theorem}[cf.\ref{main-theorem} Main theorem]\label{statement-main-theorem} Let $M$ be a connected Kuga variety defined by some Kuga datum $(\Pbf,Y)=\Vbf\rtimes(\Gbf,X)$ with its canonical model over some number field $F\subset\Cbb$, which is isogeneous to a product of the form $M^\cm\times M^\rig \times S'$ where

(i) $M^\cm$ is a CM abelian variety;

(ii) $M^\rig$ a conencted Kuga variety,  defined by some datum $(\Pbf^\rig,Y^\rig)=\Vbf^\rig\rtimes_\rho(\Gbf^\rig,X^\rig)$ such that the action of $\Gbf^\rig$ on $\Vbf^\rig$ is rigid and faithful, and the connected center of $\Gbf^\rig$ splits over $\Qbb$;

(iii) $S'$ a connected pure Shimura variety.

Write $\Cbf$ for the connected center of $\Gbf$, and assume that the connected center $\Cbf^\rig$ of $\Gbf^\rig$ splits over $\Qbb$.

Let $(M_n)_n$ be a sequence of $\Cbf$-special subvarieties of $M$, which is $\Cbf$-strict in the sense for any $\Cbf$-special subvariety $M'\subsetneq M$ we have $M_n\nsubseteq M'$ for $n$ sufficiently large. Then the averages of the canonical measures over the $\Gal(\Qac/F)$-orbit of the $M_n$'s converge to the canonical measure on $M(\Cbb)$ for the weak-* topology.
\end{theorem}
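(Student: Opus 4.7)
The plan is to reduce to the product $\tilde M:=M^\cm\times M^\rig\times S'$ via the given isogeny $\tilde M\to M$. Since isogenies of Kuga varieties are finite \'etale and transport canonical measures (up to normalization by the degree), it suffices to prove the analogous equidistribution statement on $\tilde M$ for the preimages $\tilde M_n$ of the $M_n$; weak-* convergence is preserved under pushforward by a finite \'etale map, so this reduction is essentially formal.

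\textbf{Decomposing the subvarieties on $\tilde M$.} The first nontrivial step is to show that each $\Cbf$-special subvariety $\tilde M_n$ of $\tilde M$ splits as a product $M_n^\cm\times M_n^\rig\times S_n'$, where $M_n^\cm$ is a torsion subvariety of $M^\cm$, $M_n^\rig$ is a $\Cbf^\rig$-special subvariety of $M^\rig$, and $S_n'$ is a special subvariety of $S'$. The hypotheses that $\Gbf^\rig$ acts rigidly and faithfully on $\Vbf^\rig$ and that $\Cbf^\rig$ splits over $\Qbb$ rule out the existence of ``diagonal'' sub-Kuga data gluing the three factors; in particular, the connected center $\Cbf$ of $\Gbf$ decomposes compatibly with the three-fold splitting, so every $\Cbf$-special subvariety must respect it. Strictness of $(M_n)_n$ in $M$ then translates into strictness of each factor-sequence in its respective factor, since otherwise $M_n$ would eventually be trapped in a proper $\Cbf$-special subvariety of the form $(\text{proper})\times M^\rig\times S'$, $M^\cm\times(\text{proper})\times S'$, or $M^\cm\times M^\rig\times(\text{proper})$.

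\textbf{Assembly and the main obstacle.} On each factor we invoke the appropriate input: Theorem \ref{ullmo-icm} of Szpiro-Ullmo-Zhang for $(M_n^\cm)$ in the CM abelian variety $M^\cm$; the equidistribution theorem of \cite{chen-rigid} for the \rhorig $\Cbf^\rig$-special subvarieties $(M_n^\rig)$ in $M^\rig$; and the known equidistribution of special subvarieties on pure Shimura varieties for $(S_n')$ in $S'$. Since the canonical measure of a product Kuga variety is the tensor product of the canonical measures of its factors, weak-* convergence assembles via Fubini when tested against tensor products of continuous functions; the Galois orbit $\Oscr_n$ of $\tilde M_n$ projects to Galois orbits on each factor, so the average $\mu_n$ is controlled by the three factor-averages and the three inputs combine directly. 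The principal technical difficulty is the decomposition step: one must rigorously rule out $\Cbf$-special subvarieties of $\tilde M$ whose projections onto two factors are nontrivially correlated, and match up the various centers so that ``$\Cbf$-special on $M$'' corresponds precisely to the product of the factor-wise notions. Once this structural statement is established, the remaining assembly via the three input equidistribution theorems is largely formal.
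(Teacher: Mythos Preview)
Your decomposition step is correct and matches the paper's Lemma~\ref{subdatum-compatibility}: every $\Cbf$-special subdatum splits as a product of a $\Tbf$-special, a $\Cbf^\rig$-special, and a $\Cbf'$-special factor, by a Goursat-type argument. (Incidentally, strong rigidity is \emph{not} needed here; the splitting follows already from $\Cbf=\Tbf\times\Cbf^\rig\times\Cbf'$ being the full product.) Strictness then indeed passes to each factor.

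The genuine gap is in your assembly step. You write that ``the Galois orbit $\Oscr_n$ of $\tilde M_n$ projects to Galois orbits on each factor, so the average $\mu_n$ is controlled by the three factor-averages and the three inputs combine directly.'' This is where the argument breaks. The orbit $\Oscr_n$ sits inside $\Oscr_n^\cm\times\Oscr_n^\rig\times\Oscr_n'$ but is the \emph{diagonal} $\Gal(\Qac/F)$-orbit, not the full product: a single $\sigma$ moves all three factors simultaneously. Consequently $\mu_n$ is \emph{not} $\mu_n^\cm\otimes\mu_n^\rig\otimes\mu_n'$ in general, and Fubini does not apply. Knowing that each factor-average converges gives you nothing about the diagonal average without some independence of the three Galois actions.

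This independence is precisely the heart of the paper's proof, and it is where strong rigidity is actually used. The paper first shows (Lemmas~\ref{pure-bound} and~\ref{strongly-rigid-bound}) that each $M_n^\ncm:=M_n^\rig\times S_n'$ is defined over $F_n\Qbb^\ab$ with $[F_n:F]\leq c$ uniformly; the split-center hypothesis on $\Cbf^\rig$ forces the reciprocity map to factor through $\Gal(\Qbb^\ab/\Qbb)$, which is what produces the $\Qbb^\ab$. One then applies Ribet's theorem (\ref{ribet-finiteness}): the torsion of $M^\cm$ rational over $F_n\Qbb^\ab$ is uniformly finite, so the subgroup $\Gal(\Qac/F_n\Qbb^\ab)$, which fixes $M_n^\ncm$, still moves $M_n^\cm$ through essentially its full Galois orbit. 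This decouples the CM factor from the non-CM factor and lets one run the averaging in two stages (first over $\Gal(\Qac/F_n\Qbb^\ab)$, then over the bounded quotient $\Gal(F_n\Qbb^\ab/F)$), reducing to the known equidistribution on each piece. Without this step your proof has no mechanism to separate the actions, and the ``largely formal'' assembly does not go through.
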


In the paper the restriction  on $M^\rig$ (including the condition on $\Cbf^\rig$) is referred to as strong rigidity. The motivation is an result of K.Ribet, which asserts that for an abelian variety $A$ defined over some number field $F$, the torsion subgroup of $A(F\Qbb^\ab)$ is always finite. From this we show that the Galois conjugates of special subvarieties in $M^\cm$ and in $M^\rig\times S'$ are independent of each other, and the approximation to the canonical measure on $M(\Cbb)$ is reduced to the approximation on each factor.

We also prove that the main theorem holds when the factor $M^\cm$ is trivial. However we are not yet able to treat the remaining case when $M^\cm$ appears with $M^\rig$ not strong, i.e. $\Cbf^\rig$ non-split over $\Qbb$.

The paper is organized as follows:

(1) In Section 1 we recall the basic definition of Kuga data, as was presented in \cite{chen-rigid} Section 1. We also collect materials about the canonical models of Kuga varieties, in particular the Galois action on the set of connected components of a Kuga variety.

(2) In Section 2 we show that up to shrinking the level a Kuga variety $M$ can be decomposed into a direct product of Kuga varieties $M=M^\cm\times M^\rig\times S'$ where $M^\cm$ is a CM abelian variety, $M^\rig$ is a rigid Kuga variety given by a rigid Kuga datum $(\Pbf^\rig,Y^\rig)=\Vbf^\rig\rtimes(\Gbf^\rig,X^\rig)$ with $\Gbf^\rig$ acting on $\Vbf^\rig$ faithfully, and $S'$ is a pure Shimura variety. The main theorem is stated, from which is deduced the main corollary on a special case of the Andr\'e-Oort conjecture for Kuga varieties.

(3) In Section 3 we show that the main theorem holds for rigid Kuga varieties, which is an immediate consequence to the main result of \cite{chen-rigid} through the functorial properties of conjugates of Shimura varieties proved in \cite{milne-shih} and \cite{milne-conjugate}.

(4) In Section 4 we show that in the main theorem the Galois orbit of a $\Cbf$-special subvariety  in $M$ is, up to uniformly bounded constants, of the same size as the product of the Galois orbits in the factors, hence reduce the main theorem to known cases on the factors. The independence is deduced from a result of Ribet on the finiteness of torsion points of an abelian variety in a cyclotomic tower.

(5) Finally in the appendix we sketch some facts about the construction and functorial properties of conjugates of Kuga varieties by elements in $\Aut(\Cbb/\Qbb)$, which are parallel to the pure case treated by Milne et.al.

The treatment in this paper requires little knowledge of the ergodic-theoretic arguments, such as lattice spaces and measures on them, used in \cite{chen-rigid}. 

\subsection*{Acknowledgement} This paper grows out of the thesis of the author. The author thanks his advisor Prof.Emmanuel ULLMO for guiding him into this subject with great care and patience, without which the paper wouldn't have existed. He also thanks Prof.M\"uller-Stach and Prof.Zuo for their kind help. The work is partially supported by the project SFB/TR45 ''Periods, moduli spaces, and arithmetic of algebraic varieties''.

\subsection*{Convention} Over a base field $k$, a linear $k$-group $\Hbf$ is understood to be a smooth affine algebraic $k$-group.  A $k$-factor of a reductive $k$-group is understood to be a minimal normal non-commutative $k$-subgroup (of dimension $>0$), i.e. a non-commutative $k$-simple normal $k$-subgroup.

$\Sbb$ stands for the Deligne torus $\Res_{\Cbb/\Rbb}\Gbb_\mrm$. $\ibf$ is a fixed square root of -1 in $\Cbb$.

Number fields are always understood to be embedded in $\Cbb$. Denote by $\Qac$ the algebraic closure of $\Qbb$ in $\Cbb$, and $\adele$  the set of finite adeles.

A linear $\Qbb$-group $\Hbf$ is compact if $\Hbf(\Rbb)$ is a compact Lie group. Upper and lower scripts ${}^\circ$, ${}^+$, ${}_+$ follow the convention of P.Delinge in \cite{deligne-pspm}.

For a real or complex manifold, its archimedean topology is the one locally deduced from the archimedean metric (i.e.  the usual absolute value) on $\Rbb^n$ or $\Cbf^m$.

\section{Preliminaries on Kuga varieties}

In this section we recall necessary facts about the canonical models of Kuga varieties, and give some construction that will be used in the main result. We refer to \cite{chen-rigid} Section 1 for generalities on Kuga varieties. It suffices to keep in mind the following reconstruction (cf.\cite{chen-rigid} 1.3, 1.7):

\begin{definition}\label{kuga-datum}Recall that for $\Pbf$ a linear $\Qbb$-group we put $\Xfrak(\Pbf):=\Hom_{\Gr/\Rbb}(\Sbb,\Pbf_\Rbb)$ on which $\Pbf(\Rbb)$ acts from the left by conjugation, with $\Sbb=\Res_{\Cbb/\Rbb}\Gbb_\mrm$ the Deligne torus.

(1) A Kuga datum is a pair $(\Pbf,Y)$ constructed as follows:

(1-a) a pure Shimura datum $(\Gbf,X)$ in the sense of Deligne \cite{deligne-pspm} 2.1.1;

(1-b) a finite-dimensional algebraic representation $\Gbf\ra\GLbf_\Qbb(\Vbf)$, such that for any $x\in X$, the composition $\rho\circ x:\Sbb\ra\GLbf_\Rbb(\Vbf_\Rbb)$ defines a rational Hodge structure of type $\{(-1,0),(0,-1)\}$, and that the connected center $\Cbf_\Gbf$ of $\Gbf$ acts on $\Vbf$ through a $\Qbb$-torus isogeneous to a product of a split $\Qbb$-torus and a compact $\Qbb$-torus.

(1-c) $\Pbf=\Vbf\rtimes_\rho\Gbf$, and $Y$ equals the $\Pbf(\Rbb)$-orbit of $X$ in $\Xfrak(\Pbf)$ (which is well-defined because of the inclusion chain $X\subset\Xfrak(\Gbf)\subset\Xfrak(\Pbf)$ by $\Gbf\subset\Pbf$); we also require $\Pbf$ to be minimal, in the sense that for any $\Qbb$-subgroup $\Hbf\subsetneq\Pbf$, there exists $y$ such that $y(\Sbb)\nsubseteq\Hbf_\Rbb$, and this is equivalent to the minimality of $\Gbf$ \wrt $X$.

Consequently, the push-forward $\pi_*:\Xfrak(\Pbf)\ra\Xfrak(\Gbf)$ induced by the projection $\pi:\Pbf\ra\Gbf$ induces a $\Vbf(\Rbb)$-torsor $\pi_*:Y\ra X$, with section $X\ra Y$ given by $\Gbf\mono\Pbf$. Note that $\pi_*:Y\ra X$ is a smooth surjective morphism of complex manifolds, equivariant \wrt $\pi:\Pbf(\Rbb)\ra\Gbf(\Rbb)$, and the fiber $\pi^\inv(x)$ is $\Vbf(\Rbb)$ with complex structure given by $\rho\circ x$.

We often write $(\Pbf,Y)=\Vbf\rtimes_\rho(\Gbf,X)$ to indicate that the Kuga datum is constructed out of the pure Shimura datum $(\Gbf,X)$ plus the representation $(\Vbf,\rho)$.

It should be pointed out that in \cite{chen-rigid} we have used pure Shimura data in the sense of Pink, where instead of $X\subset\Xfrak(\Gbf)$ we put a $\Gbf(\Rbb)$-equivariant map $h:X\ra\Xfrak(\Gbf)$ with finite fibers. By \cite{pink-thesis} 2.12, $h$ is injective on each connected component. Since we mainly work with connected Kuga varieties in this paper, one may restrict to pure Shimura data in the sense of Deligne.

(2) Morphisms of Kuga data are of the form $(f,f_*):(\Pbf,Y)\ra(\Pbf',Y')$, which are evidently defined as a homomorphism of $\Qbb$-group $f:\Pbf\ra\Pbf'$ plus $f_*:Y\ra Y'$ induced by $f_*:\Xfrak(\Pbf)\ra \Xfrak(\Pbf')$. The map $f_*:Y\ra Y'$ is a holomorphic map between complex manifolds, and is equivariant \wrt $f:\Pbf(\Rbb)\ra\Pbf'(\Rbb)$.

Subdata are those given by $(f,f_*)$ with $f$ an inclusion of $\Qbb$-subgroup and $f_*$ injective. The notion of product of two Kuga data $(\Pbf_i,Y_i)$ $(i=1,2)$ is $(\Pbf_1\times\Pbf_2,Y_1\times Y_2)$ defined in the evident way. And the notion of quotient of $(\Pbf,Y)$ by a normal $\Qbb$-subgroup $\Nbf$ is well-defined; when $\Nbf$ is unipotent, it is simply the usual quotient space $Y/\Nbf(\Rbb)$, cf.\cite{chen-rigid} 1.1(3).

(3) In particular, for $(\Pbf,Y)$ constructed from $(\Gbf,X)$ and $\rho$ as in (1), we have the canonical projection $\pi:(\Pbf,Y)\ra(\Gbf,X)$, which is the same as taking quotient by the unipotent radical $\Vbf$. We thus call $(\Gbf,X)$ the pure base of $(\Pbf,Y)$.  $\pi$ admits a pure section $i:(\Gbf,X)\mono(\Pbf,Y)$ induced by the inclusion $i:\Gbf\mono\Pbf$, which is a maximal pure subdatum of $(\Pbf,Y)$. All the maximal pure subdata of $(\Pbf,Y)$ are of the form $(v\Gbf v^\inv;v\rtimes X)$, with $v$ running through $\Vbf(\Qbb)$, due to the uniqueness of Levi $\Qbb$-subgroup up to conjugacy by  the unipotent radical.

An immediate corollary, cf.\cite{chen-rigid} 1.8, is that any Kuga subdatum $(\Pbf',Y')$ of the $(\Pbf,Y)$ (constructed from $(\Gbf,X)$ and $\rho$) is of the form $(\Pbf'=\Vbf'\rtimes v\Gbf'v^\inv,Y'=(\Vbf'(\Rbb)+v)\rtimes X')$, where $(\Gbf',X')$ is a pure subdatum of $(\Gbf,X)$, $\Vbf'$ is a subrepresentation of $\rho_{|_{\Gbf'}}:\Gbf'\ra\GLbf_\Qbb(\Vbf)$, and $v\in\Vbf(\Qbb)$ is unique up to $\Vbf'(\Qbb)$-translation. Here the formula of $Y'$ means that $Y'$ is the orbit of $X'$ in $Y$ under translation by the set $\Vbf'(\Rbb)+v$.

(4) We also talk about connected Kuga data, which are of the form $(\Pbf,Y;Y^+)$ with $(\Pbf,Y)$ a Kuga datum and $Y^+$ a connected component of $Y$. Notions such as morphisms of connected Kuga subdata, subdata, pure base, etc. are understood in the natural way.


(5) For convenience, we say a Kuga datum $(\Pbf,Y)$ is toric if $\Pbf$ is a $\Qbb$-torus, whence $Y$ is a single point. In the literature they are usually called ''special subdata'', but we reserve the adjective ''special'' for special subvarieties, which in general are given by non-toric subdata.
\end{definition}

\begin{definition}\label{reflex-field}[cf.\cite{deligne-pspm} 2.2.1 and \cite{pink-thesis} 11.1]
Let $(\Pbf,Y)$ be a Kuga datum.

(1) The reflex field $E(\Pbf,Y)$ is the field of definition of the $\Pbf(\Cbb)$-conjugacy class of $\mu_y:\Gbb_{\mrm,\Cbb}\ra\Pbf_\Cbb$ ($y\in Y$ an arbitrary point) in   $\Xbf_\Pbf^\vee(\Cbb)/\Pbf(\Cbb)$ with $\Xbf_\Pbf^\vee(\Cbb):=\Hom_{\Gr/\Cbb}(\Gbb_{\mrm\Cbb},\Pbf_\Cbb)$ the set of cocharacters over $\Cbb$. Here by field of definition, we mean the smallest subfield $E$ of $\Cbb$, such that $\Aut(\Cbb/E)$ equals the isotropy group of the class of $\mu_y$ for the action of $\Aut(\Cbb/\Qbb)$ on $\Xbf_\Pbf^\vee(\Cbb)//\Pbf(\Cbb)$.

The following facts are standard:

(a) Once there is a morphism $(\Pbf,Y)\ra(\Pbf',Y')$, we then have $E(\Pbf,Y)\supset E(\Pbf',Y')$; in particular, if $(\Gbf,X)$ is a pure section of $(\Pbf,Y)$, then $E(\Gbf,X)=E(\Pbf,Y)$. Moreover if $(\Pbf,Y)=(\Pbf_1,Y_1)\times(\Pbf_2,Y_2)$ is a product of Kuga data, then $E(\Pbf,Y)=E_1E_2$ is the composition field of $E_i=E(\Pbf_i,Y_i)$ (which is argued through $\Xbf_{\Pbf_1\times\Pbf_2}^\vee=\Xbf_{\Pbf_1}^\vee\times\Xbf_{\Pbf_2}^\vee$).

(b) If $(\Tbf,x)$ is a toric Shimura datum, then the reflex field $E=E(\Tbf,x)$ is just the number field of definition for the homomorphism $\mu_x:\Gbb_{\mrm\Cbb}\ra\Tbf_\Cbb$. In particular it is contained in the splitting field of $\Tbf$.

And for $\mu_x:\Gbb_{\mrm E}\ra\Tbf_E$, we put $r_x$ to be the following composition $$\Res_{E/\Qbb}\Gbb_{\mrm E}\ot{\mu_x}\lra\Res_{E/\Qbb}\Tbf_E\ot{\Nm_{E/\Qbb}}\lra\Tbf.$$

(c) Reflex fields are number fields embedded in $\Cbb$. In fact, if $(\Gbf,X)$ is a pure non-toric Shimura datum, with $x\in X$ a special point, sending $\Sbb$ into $\Hbf_\Rbb$ for some maximal $\Qbb$-torus $\Hbf\subset\Gbf$, then we have the equality $$\Xbf_\Gbf^\vee(\Cbb)/\Gbf(\Cbb)=\Xbf_\Hbf^\vee(\Cbb)/W(\Gbf_\Cbb,\Hbf_\Cbb)$$ where $W(\Gbf_\Cbb,\Hbf_\Cbb)$ is the Weyl group of $\Hbf_\Cbb$, and the $\Gbf(\Cbb)$-conjugacy class of $\mu_x$ on the left hand side is the same as the $W(\Gbf_\Cbb,\Hbf_\Cbb)$-conjugacy class of $\mu_x:\Gbb_\mrm\ra\Hbf_\Cbb$. Now that $\Hbf$ is defined over $\Qbb$, $\mu_x$  descends over some number field in $\Cbb$ (contained in the splitting field of $\Hbf$).

\end{definition}

We then define Kuga varieties and special subvarieties:

\begin{definition}\label{kuga-variety}

(1) Let $(\Pbf,Y)$ be a Kuga datum, and $K\subset\Pbf(\adele)$ a \cosg. The Kuga variety defined by $(\Pbf,Y)$ at level $K$ is a reduced algebraic variety $M_K(\Pbf,Y)$ over $E(\Pbf,Y)$ whose complex points are given as $$M_K(\Pbf,Y)(\Cbb)=\Pbf(\Qbb)\bsh[Y\times\Pbf(\adele)/K]$$ with $\Pbf(\Qbb)$ acting on the product $Y\times\Pbf(\adele)/K$ along the diagonal.

The fact that the quotient space above underlies an algebraic variety is proved in \cite{pink-thesis} Chapter 9, generalizing the Baily-Borel compactification in the pure case. It is smooth if the level $K$ is neat. The fact that this complex algebraic variety is defined over $\Qac$, and further admits a canonical model over the reflex field, is established through a series of works of P.Deligne, J.Milne, M.Borovoi, R.Pink, etc.

Note that if one fixes $Y^+$ a connected component of $Y$, then $\Pbf(\Qbb)_+$ equals the stabilizer of $Y^+$ in $\Pbf(\Qbb)$, and by taking $\{g\}$ a (finite) set of representatives for the double quotient $\Pbf(\Qbb)_+\bsh\Pbf(\adele)/K$, one may rewrite $$M_K(\Pbf,Y)(\Cbb)=\coprod_g\Gamma_K(g)\bsh Y^+$$ with $\Gamma_K(g):=\Pbf(\Qbb)_+\cap K$ a congruence subgroup stabilizing $Y^+$. The set of geometric connected component of $M_K(\Pbf,Y)$ is thus $\Pbf(\Qbb)_+\bsh\Pbf(\adele)/K$.

(2) When we are given a morphism of Kuga data $(f,f_*):(\Pbf,Y)\ra(\Pbf',Y')$ with $K\subset\Pbf(\adele)$ and $K'\subset\Pbf'(\adele)$ \cosgs such that $f(K)\subset K'$, we get a morphism of Kuga varieties $f:M_K(\Pbf,Y)\ra M_{K'}(\Pbf',Y')$, which is the evident map $\Pbf(\Qbb)\bsh[Y\times\Pbf(\adele)/K]\ra\Pbf'(\Qbb)\bsh[Y'\times \Pbf'(\adele)/K']$ when evaluated over $\Cbb$. It descends to a morphism of algebraic varieties defined over $E(\Pbf,Y)$ when the canonical models are taken into account.

In particular, when $(\Pbf,Y)$ is of the form $\Vbf\rtimes_\rho(\Gbf,X)$ as in \ref{kuga-datum}(1), and $K=K_\Vbf\rtimes K_\Gbf$ for $K_\Vbf\subset\Vbf(\adele)$ and $K_\Gbf\subset\Gbf(\adele)$ \cosgs, then $\Pbf(\Qbb)_+=\Vbf(\Qbb)\rtimes\Gbf(\Qbb)_+$, with $\Gbf(\Qbb)_+$ equal to the stabilizer in $\Gbf(\Qbb)$ of the connected component $X^+=\pi_*(Y^+)$, and $\Pbf(\Qbb)_+\bsh\Pbf(\adele)/K\isom\Gbf(\Qbb)_+\bsh\Gbf(\adele)/K_\Gbf$, as for the unipotent radical $\Vbf$ we have $\Vbf(\adele)=\Vbf(\Qbb)K_\Vbf$. In this case the canonical projection $\pi:M_K(\Pbf,Y)\ra M_{K_\Gbf}(\Gbf,X)$ is an abelian scheme, whose fibers are abelian varieties with $\Vbf(\Abb)/\Vbf(\Qbb)K_\Vbf$ as the underlying complex tori.


(3) When $K$ runs through \cosgs of $\Pbf(\adele)$, we get the total Kuga scheme as a pro-scheme $M(\Pbf,Y)=\limproj_KM_K(\Pbf,Y)$, with transition maps all defined over $E(\Pbf,Y)$. Morphisms of total Kuga schemes $f:M(\Pbf,Y)\ra M(\Pbf',Y')$ are defined in the evident way once a morphism $f:(\Pbf,Y)\ra(\Pbf',Y')$ is given; it is defined over $E(\Pbf,Y)$.

Take $g\in\Pbf(\adele)$, the evident translation $$t(g):M(\Pbf,Y)(\Cbb)\ra M(\Pbf,Y)(\Cbf)\ \ ([y,a]_K)\mapsto([y,ag]_K)$$ is a well-defined morphism of complex pro-scheme, and descends to $E(\Pbf,Y)$, called the Hecke translation by $g$.

(4) A special subvariety of $M_K(\Pbf,Y)$ is a geometrically irreducible component of the closed subvariety $\pr_K(t(g)(f(M(\Pbf_1,Y_1))))$, the latter given by the composition $$M(\Pbf_1,Y_1)\ot{f}\lra M(\Pbf,Y)\ot{t(g)}\lra M(\Pbf,Y)\ot{\pr_K}\lra M_K(\Pbf,Y)$$ where $f$ is the morphism given by the inclusion of subdatum $f:(\Pbf_1,Y_1)\mono(\Pbf,Y)$ and $\pr_K$ is the canonical projection from the projective limit. In particular, the special subvariety is a closed subvariety defined over a number field containing $E(\Pbf_1,Y_1)$.

Equivalently, the special subvariety above is given as the unique closed subvariety whose set of complex points are given by the formula  $\wp_K(Y_1^+\times gK)$, where $Y_1^+$ is some connected component of $Y_1$, and $\wp_K: Y\times\Pbf(\adele)/K\ra M_K(\Pbf,Y)(\Cbb)$ is the quotient map modulo $\Pbf(\Qbb)$, which is called uniformization of $M_K(\Pbf,Y)$. See \cite{moonen-model}  6.2 for similar description in the pure case.

\end{definition}

And we will mainly work in the setting of connected Kuga varieties(cf.\cite{chen-rigid} 1.9 (2)):
\begin{definition}\label{connected-kuga-variety}

(1) Connected Kuga data are of the form $(\Pbf,Y;Y^+)$ with $Y^+$ a connected component of $Y$. A connected Kuga variety is a geometrically irreducible algebraic variety $M$ given by $M(\Cbb)=\Gamma\bsh Y^+$ associated to connected Kuga datum $(\Pbf,Y;Y^+)$ and congruence subgroup $\Gamma=\Pbf(\Qbb)_+\cap K$ for some \cosg $K\subset\Pbf(\adele)$. In particular it is admits a canonical model over a finite abelian extension of $E(\Pbf,Y)$, as will be explained in \ref{reciprocity}.

Morphisms between connected Kuga data and connected Kuga varieties are defined in the evident way. For example, a conencted subdatum is of the form $(\Pbf',Y';Y'^+)\subset(\Pbf,Y;Y^+)$ with $Y'^+$ a connected component of $Y'$ that is contained in $Y^+$.

In particular, from a morphism of connected Kuga data $f:(\Pbf,Y;Y^+)\ra(\Pbf',Y';Y'^+)$ and congruence subgroups $\Gamma\subset\Pbf(\Qbb)_+$ and $\Gamma'\subset\Pbf'(\Qbb)_+$ such that $f(\Gamma)\subset\Gamma'$, we get an evident map of quotient spaces $f:M(\Cbb)=\Gamma\bsh Y^+\ra M'(\Cbb)=\Gamma'\bsh Y'^+$. It descends to a morphism of algebraic varieties over some common number field $F$ containing the reflex fields $E(\Pbf,Y)$ and $E(\Pbf',Y')$.

In the sequel we simply write $M=\Gamma\bsh Y^+$ to indicate the Kuga variety defined over some number field.

(2) For $M=\Gamma\bsh Y^+$ defined by $(\Pbf,Y;Y^+)$  as above, we have the uniformization map $$\wp_\Gamma:Y^+\ra\Gamma\bsh Y^+\ \ y\mapsto\Gamma y.$$ A special subvariety is of the form $\wp_\Gamma(Y'^+)$ given by some connected subdatum $(\Pbf',Y';Y'^+)\subset(\Pbf,Y;Y^+)$. It equals the image of the morphism $\Gamma'\bsh Y'^+\ra \Gamma\bsh Y^+$ with $\Gamma'$ a congruence subgroup contained in $\Gamma\cap\Pbf'(\Qbb)_+$, and it is a geometrically irreducible subvariety of $M$ defined over some number field.

We remark that special subvarieties described in \ref{kuga-variety} (4) can be realized in the connected setting: say $M'$ is a special subvariety of $M_K(\Pbf,Y)$ given as $\wp_K(Y'^+\times gK)$ for $g\in\Pbf(\adele)$, some subdatum $(\Pbf',Y')$, and a connected component $Y'^+$ of $Y'$, then it is the same as $\wp_\Gamma(Y'^+)$ in $\Gamma\bsh Y^+$, where $Y^+$ is a connected component of $Y$ containing $Y'^+$, and $\Gamma=\Pbf(\Qbb)\cap gKg^\inv$.


\end{definition}

Our later arguments also involve the description of the Galois action on the set of geometrically connected components of the canonical models.
\begin{definition-proposition}\label{canonical-model}\cite{deligne-pspm} 2.2.2 - 2.2.5

(0) Some abelian groups associated to reductive $\Qbb$-groups:

For $\Hbf$ a connected reductive $\Qbb$-group, we put $\lambda:\Hbf'\ra\Hbf^\der$ to be the simply-connected covering of $\Hbf^\der$, $\Hbf(\Qbb)^-_+$ the closure of $\Hbf(\Qbb)_+$ in $\Hbf(\adele)$, and put $\pitilde(\Hbf)$ to be the quotient $\Hbf(\Abb)/(\Hbf(\Rbb)_+\Hbf(\Qbb)^-_+\lambda(\Hbf'(\adele)))$, which is a profinite abelian group. Here it carries the natural action of translation by $\Hbf(\adele)$ given by the evident homomorphism $\iota:\Hbf(\adele)\ra\pitilde(\Hbf)$, and for $K\subset\Hbf(\adele)$ \cosg, we write $\pitilde(\Hbf)/K$ for the finite abelian group $\pitilde(\Hbf)/\iota(K)$.

In particular, for a fixed number field $F$, the $\Qbb$-torus $\Hbf=\Gbb_\mrm^F:=\Res_{F/\Qbb}\Gbb_{\mrm F}$ is used in global class field theory: the reciprocity map for the global class field theory is an isomorphism $$\rec^F:\Gal(F^\ab/F)\isom\pitilde(\Gbb^F_\mrm).$$

(1) The case of a toric Shimura datum:

Let $(\Tbf,x)$ be a toric Shimura datum, with $E=E(\Tbf,x)$, and $K\subset\Tbf(\adele)$ \cosg. The canonical model of the zero-dimensional Shimura variety $M_K(\Tbf,x)$ over $E$ is given by the action of $\Gal(\Qac/E)$ on the set $\pi_0(M_K(\Tbf,x)_\Qac)=\Tbf(\adele)/\Tbf(\Qbb)K$ by translation through the homomorphism called $\rec_\Tbf$: $$\Gal(\Qac/E)\ra\pitilde(\Gbb_\mrm^F)\ot{ r_x }\lra\pitilde(\Tbf)\lra\pitilde(\Tbf)/K$$
where $r_x$ is the homomorphism defined in \ref{reflex-field}(b).

(2) The pure case:

Let $M_K(\Gbf,X)$ be a pure Shimura variety at finite level, with $E=E(\Gbf,X)$ the reflex field. Then the set of geometrically connected components $\pi_0(M_K(\Gbf,X)_\Qac)$ is canonically identified with $\pitilde(\Gbf)/K$, and the action of $\Gal(\Qac/E)$ on it is given by the translation via the homomorphism $$\Gal(\Qac/E)\lra\Gal(E^\ab/E)\ot{\rec^E}\lra\pitilde(\Gbb^E_\mrm)\ot{\mu_X}\lra\pitilde(\Res_{E/\Qbb}\Gbf_E)\ot{\Nm_{E/\Qbb}}\lra\pitilde(\Gbf)\lra\pitilde(\Gbf)/K.$$Readers are referred to \cite{deligne-pspm} 2.4 for the construction of $\mu_X$ and $\Nm_{E/\Qbb}$ in the non-commutative case. It suffices to know that, if we put $\nu:\Gbf\ra\Tbf:=\Gbf/\Gbf^\der$ the abelianization, then $\nu:\pitilde(\Gbf)\isom\pitilde(\Tbf)$ and $\mu:\pitilde(\Gbf)/K\isom\pitilde(\Tbf)/\nu(K)$ canonically, by \cite{deligne-bourbaki} 2.4 and 2.5.

If $M_K(\Gbf,X)\ra M_{K'}(\Gbf',X')$ is a morphism as in \ref{kuga-variety}(2), the resulting map $$\pi_0(M_K(\Gbf,X)_\Qac)\ra\pi_0(M_{K'}(\Gbf',X')_\Qac)$$ is equivariant \wrt the inclusion of Galois groups $\Gal(\Qac/E(\Gbf,X))\ra\Gal(\Qac/E(\Gbf',X'))$.
 
(3) The Kuga case:

As the Andr\'e-Oort conjecture is insensitive \wrt level raising (cf.\ref{level-insensitivity} below), for Kuga vareities $M_K(\Pbf,Y)$ with $(\Pbf,Y)=\Vbf\rtimes_\rho(\Gbf,X)$ we always take the level $K$ to be of the form $K_\Vbf\rtimes K_\Gbf$, where $K_\Vbf\subset\Vbf(\adele)$ and $K_\Gbf\subset\Gbf(\adele)$ are \cosgs, and $K_\Gbf$ stabilzies $K_\Vbf$ \wrt $\rho$. Then the canonical projection $\pi:M_K(\Pbf,Y)\ra M_{K_\Gbf}(\Gbf,X)$ is an abelian scheme, and the fibers are geometrically connected. Thus $\pi_0(M_K(\Pbf,Y)_\Qac)=\pi_0(M_{K_\Gbf}(\Gbf,X))$, with the action of $\Gal(\Qac/E)$ as before, where $E=E(\Pbf,Y)=E(\Gbf,X)$. Namely the mixed case is reduced to a pure section.

The general case when $K$ is not a semi-direct product of the form above can be treated similarly. However when we worked with special subvarieties of $M_K(\Pbf,Y)$ that are given by subdata of the form $(\Pbf',Y')=\Vbf'\rtimes(v\Gbf'v^\inv, v\rtimes X')$, the set $\pi_0(M_{K\cap\Pbf'(\adele)}(\Pbf',Y'))$ becomes more complicated. It would be more convenient to assume $K=K_\Vbf\rtimes K_\Gbf$ so as to simplify some computations on the Galois conjugates of special subvarieties given by $(\Pbf',Y')$.

\end{definition-proposition}

\begin{lemma}[level insensitivity]\label{level-insensitivity}

Let $f:M=M_K(\Pbf,Y)\ra M'=M_{K'}(\Pbf,Y')$ be a mmorphism of Kuga varieties given by $f:(\Pbf,Y)\ra(\Pbf',Y')$ is a morphism of Kuga data in the sense of Pink, with $f :Y\ra Y'$ a finite covering and $f(K)\subset K'$. Then the Andr\'e-Oort conjecture holds for $M$ \ifof it holds for $M'$. The same is true when $M$ and $M'$ are replaced by connected Kuga varieties in the sense of Pink.

Let $M=\Gamma\bsh Y^+$ be a connected Kuga variety, and $\Gamma'\subset\Gamma$ a second congruence subgroup. Then the Andr\'e-Oort conjecture holds for $M'=\Gamma'\bsh Y^+$ \ifof it holds for $M$.

\end{lemma}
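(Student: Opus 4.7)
The plan is to reduce both directions of the equivalence to two standard facts: the morphism $f$ (respectively the canonical projection $M'=\Gamma'\bsh Y^+\to M=\Gamma\bsh Y^+$) is finite surjective, and it identifies special subvarieties on the two sides up to taking direct images and irreducible components of preimages.

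First I would verify that $f$ is finite and surjective. Since $f\colon Y\to Y'$ is a finite covering and $f(K)\subset K'$, the induced map on $\Cbb$-points has finite fibers and closed image; combined with GAGA and the existence of canonical models, $f$ is a finite morphism of algebraic varieties. For the map $M'=\Gamma'\bsh Y^+\to M=\Gamma\bsh Y^+$ the same conclusion follows from the fact that any two congruence subgroups of $\Pbf(\Qbb)_+$ are commensurable, so $\Gamma'\subset\Gamma$ forces $[\Gamma:\Gamma']<\infty$.

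Next I would establish the correspondence of special subvarieties. Using the description in \ref{kuga-variety}(4) and \ref{connected-kuga-variety}(2), every special subvariety of $M$ is of the form $\wp_K(Y_1^+\times gK)$ for some Kuga subdatum $(\Pbf_1,Y_1)\subset(\Pbf,Y)$ and $g\in\Pbf(\adele)$; its image under $f$ equals $\wp_{K'}(f_*(Y_1^+)\times f(g)K')$, which is the special subvariety of $M'$ attached to the subdatum $f(\Pbf_1,Y_1)\subset(\Pbf',Y')$ and coset $f(g)K'$. Conversely, every irreducible component of the preimage of a special subvariety of $M'$ is again special in $M$, being cut out by a Kuga subdatum obtained from a preimage of the defining subdatum together with a suitable connected component of $Y$ (or an appropriate Hecke translate). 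In particular $f$ carries special points to special points, and preimages of special points are finite sets of special points.

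With these ingredients the conjecture transfers in both directions. Assume the conjecture for $M'$, and let $Z\subset M$ be irreducible with Zariski dense special points; then $f(Z)$ is irreducible in $M'$ with Zariski dense special points, hence special by hypothesis, and $Z$ is an irreducible component of $f^{-1}(f(Z))$, therefore special. Conversely, assume the conjecture for $M$, and let $Z'\subset M'$ be irreducible with Zariski dense special points; since $f$ is finite surjective, $f^{-1}(Z')$ has pure dimension $\dim Z'$ and its special points are Zariski dense, so a pigeonhole argument over the finitely many irreducible components yields one component $Z$ with dense special points, which is special by hypothesis, forcing $Z'=f(Z)$ to be special as well. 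The connected case and the case of two congruence subgroups $\Gamma'\subset\Gamma$ sharing a common $Y^+$ are handled by formally the same argument. The main obstacle will be the precise matching of special subvarieties above, in particular the verification that irreducible components of $f^{-1}(M_1')$ arise from Kuga subdata of $(\Pbf,Y)$ together with explicit Hecke translates; this requires care because $f$ need not be flat when the underlying map of $\Qbb$-groups has a nontrivial finite kernel, but once one unwinds the uniformization and the parametrization of subdata in \ref{kuga-datum}(3) it becomes a routine bookkeeping exercise.
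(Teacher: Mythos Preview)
Your proposal is correct and follows essentially the same approach as the paper: both arguments rest on the fact that under a finite covering $f:Y\ra Y'$ the operations $f$ and $f^{-1}$ carry (finite unions of) special subvarieties to (finite unions of) special subvarieties, and then transfer the conjecture by a dimension/pigeonhole argument. The paper phrases the transfer in terms of maximal special subvarieties of a closed $Z$ while you use the equivalent ``Zariski-dense special points'' formulation, but the substance is the same.
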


\begin{proof}
For $M$ a Kuga variety, write $\Fcal(M)$ the set of finite union of special subvarieties in $M$ (in the sense of Pink). Now that $f:Y\ra Y'$ is a finite covering, equivariant \wrt $f:\Pbf(\Rbb)\ra\Pbf'(\Rbb)$, the image and the pull-back along $f$ respect the finite union of special subvarieties, i.e. the arrows $f:\Fcal(M)\leftrightarrow\Fcal(M'):f^\inv$ are well-defined.

Assume the Andr\'e-Oort conjecture holds for $M'$, and $Z\subset M$ a closed subvariety, with $\Sigma(Z)$ the set of maximal special subvarieties in $Z$. Then for $S'$ a maximal special subvarieties in $f(Z)$, and $S$ a geometrically irreducible component of $f^\inv(S')$ contained in $Z$, we have $S\in\Sigma(Z)$ by dimension arguments.

Similarly, if the Andr\'e-Oort conjecture holds for $M$, then it holds for $M'$ using the property of $f$ and dimension arguments.\end{proof}

For $\Gbf$ a connected reductive $\Qbb$-group, with $\Cbf$ the connected center of $\Gbf$, and $\nu:\Gbf\ra\Tbf=\Gbf/\Gbf^\der$ the abelianization, the induced map $\nu:\Cbf\ra\Tbf$ is an isogeny. Take $K_\Cbf\subset\Cbf(\adele)$ \cosg, and $K_\Tbf=\nu(K_\Cbf)$, then $\nu:\pitilde(\Cbf)/K_\Cbf\ra\pitilde(\Tbf)/K_\Tbf$ is a homomorphism of finite abelian groups; its kernel and cokernel are of cardinality bounded by a constant which only depends on $\Gbf$. See e.g. \cite{ullmo-yafaev} 2.8 for related proofs.
\bigskip

We recall the notions of $\Cbf$-special subvarieties and rigid Kuga varieties (cf.\cite{chen-rigid} 2.1):

\begin{definition}\label{c-special-rigid}
Let $(\Pbf,Y)=\Vbf\rtimes_\rho(\Gbf,X)$ be a Kuga datum, and fix $\Cbf$ a connected $\Qbb$-torus in $\Gbf$. Write $\pi:(\Pbf,Y)\ra(\Gbf,X)$ for the canonical quotient (modulo $\Vbf$)

(1) A Kuga subdatum $(\Pbf',Y')$ of $(\Pbf,Y)$ is said to be $\Cbf$-special, if $\Cbf$ equals the connected center of the image $\pi(\Pbf')$ (with $(\pi(\Pbf'),\pi_*(Y'))$ a pure subdatum of $(\Gbf,X)$). The case of connected Kuga data is similar.

 Let $M=\Gamma\bsh Y^+$ be a connected Kuga variety. A special subvariety $M'$ of $M$ is said to be $\Cbf$-special if it is given as $M'=\wp_\Gamma(Y'^+)$ for some connected $\Cbf$-special subdatum $(\Pbf',Y';Y'+)\subset(\Pbf,Y;Y^+)$.

And by the reduction in \cite{chen-rigid} 2.8, the study of the Andr\'e-Oort conjecture for $\Cbf$-special subvarieties in a given connected Kuga variety $M$ is reduced to the case where $M$ is a connected Kuga variety given by $(\Pbf,Y;Y^+)=\Vbf\rtimes_\rho(\Gbf,X;X^+)$ with $\Cbf$ the connected center of $\Gbf$.

(2) $(\Pbf,Y)$ is said to be rigid (\wrt $\rho$), if $\Vbf^{\Gbf^\der}=0$, namely the action of $\Gbf^\der$ on $\Vbf$ through $\rho$ does not admit any trivial subrepresentation of dimension $>0$.

A subdatum $(\Pbf',Y')=\Vbf'\rtimes(v\Gbf'v^\inv, v\rtimes X')$ of $(\Pbf,Y)$ is rigid if $\Vbf\rtimes_\rho(\Gbf',X')$ is rigid, i.e. $\Vbf^{\Gbf'^\der}=0$. Note that this condition is stronger than $\Vbf'^{\Gbf'^\der}=0$. And if $(\Pbf,Y)$ admits a rigid subdatum, then $(\Pbf,Y)$ is rigid itself.

Similarly, we have the notions of rigid connected Kuga (sub)data and of rigid special subvarieties: for $M=\Gamma\bsh Y^+$ a connected Kuga variety given by $(\Pbf,Y;Y^+)$, a special subvariety of $M$ is rigid if it is given by some rigid connected subdatum.

\end{definition}

We end this section with the following examples of Kuga varieties:

\begin{example}
  \label{example}(cf. \cite{pink-combination} 2.7, 2.8. 2.10)
(1) Let $(\Vbf,\psi)$ be a symplectic $\Qbb$-vector space of dimension $2g$, $\GSp(\Vbf)$ the $\Qbb$-group of symplectic similitude of $(\Vbf,\psi)$, and $\Hscr(\Vbf)$ the set of homomorphisms of $\Rbb$-groups $h:\Sbb\ra\GSp(\Vbf)_\Rbb$ that induces a complex structure on $\Vbf$ via $\GSp(\Vbf)\subset\GLbf(\Vbf)$ which is $\psi$-polarizable, i.e. $(x,y)\mapsto\psi(x,h(\ibf)y)$ is symmetric and definite. Then $\Hscr(\Vbf)$ is isomorphic to the Siegel double upper half space, and $(\GSp(\Vbf),\Hscr(\Vbf))$ is a Shimura datum. The reflex field is $\Qbb$.

$(\Vbf,\psi)$ admits an integral basis $(e_{\pm i})_{1\leq i\leq g}$, i.e. a $\Qbb$-basis $(e_{\pm i})$ such that $\psi(e_i,e_j)=0$ for $i+j\neq 0$, and $\psi(e_i,e_{-i})=1$ for $1\leq i\leq g$. Then $\Lbf=\oplus\Zbb e_i$ is a $\Zbb$-lattice for $\Vbf$, and by taking the \cosg $K(n)=\Ker(\GSp(\Zbhat\otimes_\Zbb\Lbf)\ra\GSp(\Lbf/n\Lbf))$ of $\GSp(\Vbf)(\adele)$ we get $M_{K(n)}(\GSp(\Vbf),\Hscr(\Vbf))$ the moduli $\Qbb$-scheme of principally polarized abelian scheme with level-n structure.

Quotient by $\Sp(\Vbf)$ defines a morphism of pure Shimura data $$(\GSp(\Vbf),\Hscr(\Vbf))\ra(\Gbb_\mrm,\cdot)$$ and a morphism of Shimura varieties $$M_{K(n)}(\GSp(\Vbf),\Hscr(\Vbf))\ra M_{K_{\Gbb_\mrm}(n)}(\Gbb_\mrm,\cdot)$$ with geometrically connected fibers, where the target is $(\Zbb/n)^\times$, on which $\Gal(\Qac/\Qbb)$ acts through the cyclotomic Galois group $(\Zbb/n)^\times\isom\Gal(\Qbb(\mu_n)/\Qbb)$. When $K(n)$ shrinks to the trivial group, we get $\Zbhat^\times\isom\Gal(\Qbb^\ab/\Qbb)$ as the set of geometrically connected components of $M(\GSp(\Vbf),\Hscr(\Vbf))$.

 The standard representation $\rho:\GSp(\Vbf)\ra\GLbf_\Qbb(\Vbf)$ gives rise to the Kuga datum $(\Pcal(\Vbf),\Ycal(\Vbf)):=\Vbf\rtimes_\rho(\GSp(\Vbf),\Hscr(\Vbf))$. And by taking the \cosg $K_\Vbf=\Zbhat\otimes_\Zbb\Lbf$ of $\Vbf(\adele)$ and $K=K_\Vbf\rtimes K(n)$  the canonical projection $$\pi:M_K(\Pcal(\Vbf),\Ycal(\Vbf))\ra M_{K(n)}(\GSp(\Vbf),\Hscr(\Vbf))$$ defines an abelian scheme, which is the universal family of abelian vareities corresponding to the moduli $\Qbb$-scheme $M_{K(n)}(\GSp(\Vbf),\Hscr(\Vbf))$.

(2) Keep the notations as in (1), and consider a toric subdatum $(\Tbf,x)\subset(\GSp(\Vbf),\Hscr(\Vbf))$. $x:\Sbb\ra\GSp(\Vbf)_\Rbb$ is then a homomorphism with image in $\Tbf_\Rbb$, and $\Tbf$ is the smallest $\Qbb$-subgroup having this property. The Kuga datum $(\Pbf,Y):=\Vbf\rtimes_\rho(\Tbf,x)$ gives rise to connected Kuga varieties of the form $\Gamma_\Vbf\bsh\Vbf(\Rbb)$ which is an abelian variety with Mumford-Tate group equal to $\Tbf$: say we take $K_\Tbf=\Tbf(\adele)\cap K(n)$, and $\Gamma_\Vbf=\Vbf(\Qbb)\cap K_\Vbf$, then $\Gamma_\Vbf\bsh\Vbf(\Rbb)$ is realized as a connected component of $M_{K_\Vbf\rtimes K_\Tbf}(\Pbf,Y)$, which is the pull-back of the abelian scheme $\pi:M_{K_\Vbf\rtimes K(n)}(\Pcal(\Vbf),\Ycal(\Vbf))\ra M_{K(n)}(\GSp(\Vbf),\Hscr(\Vbf))$ along the subscheme $M_{K_\Tbf}(\Tbf,x)\ra M_{K(n)}(\GSp(\Vbf),\Hscr(\Vbf))$ which is a zero-dimensional subscheme defined over $E(\Tbf,x)$.

\end{example}

\section{Decomposition of Kuga varieties}

In this section we decompose a connected Kuga variety, up to changing the level, into a product of three Kuga varieties $A\times M^\rig\times S$, where $A$ is a CM abelian variety obtained as in \ref{example} (2), $S$ a connected pure Shimura variety, and $M^\rig$ a connected Kuga variety given by some rigid Kuga datum $(\Pbf,Y;Y^+)=\Vbf\rtimes_\rho(\Gbf,X;X^+)$. The decomposition is compatible with the structure of canonical models.

\begin{lemma}\label{split-tori} Let $f:(\Gbf,X)\ra(\Gbf',X')$ be a morphism of pure Shimura data, such that $f(\Gbf)$ is normal in $\Gbf'$, and that the neutral components of $\Ker(f)$ and of $\Cok(f)$ are split $\Qbb$-tori. Then $E(\Gbf,X)=E(\Gbf',X')$.

\end{lemma}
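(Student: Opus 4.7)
The containment $E(\Gbf,X)\supset E(\Gbf',X')$ is immediate from the functoriality of Definition \ref{reflex-field}(a) applied to the given morphism $f$; the substance is the reverse inclusion. I factor $f$ through its image as $\Gbf\twoheadrightarrow f(\Gbf)\hookrightarrow\Gbf'$ and establish $E(\Gbf,X)=E(f(\Gbf),X)$ and $E(f(\Gbf),X)=E(\Gbf',X')$ separately. On the surjective side, the kernel $N:=\Ker(f)$ has $N^\circ$ a split $\Qbb$-torus which, being a connected normal subgroup of the connected reductive $\Gbf$, is automatically contained in $Z(\Gbf)^\circ$. On the inclusion side, the conjugation action $\Gbf'\to\mathrm{Aut}(f(\Gbf))$ descends to $Q:=\Gbf'/f(\Gbf)\to\mathrm{Out}(f(\Gbf))$; since $\mathrm{Out}$ of a connected reductive group is a discrete (étale) group scheme, the connected $Q^\circ$ is killed, so $Q^\circ$ acts by inner automorphisms, allowing me to lift it to a split central $\Qbb$-subtorus $S^\circ\subset Z_{\Gbf'}(f(\Gbf))$ (up to a finite central kernel).

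The core claim is then that a split central $\Qbb$-torus contributes trivially to the reflex field. Pick a maximal $\Qbb$-torus $\Hbf\subset\Gbf$ containing $N^\circ$ such that $\mu_x$ factors through $\Hbf_\Cbb$ (possible after moving $x$ to a special point of $X$, cf.\ \ref{reflex-field}(c)), and a maximal $\Qbb$-torus $\Hbf'\subset\Gbf'$ containing $f(\Hbf)\cdot S^\circ$. Centrality of $N^\circ$ and $S^\circ$ in their respective ambient groups forces $f$ to be an isogeny on derived subgroups, yielding a canonical, $\Gal(\Qac/\Qbb)$-equivariant identification of the Weyl groups $W=W(\Gbf_\Cbb,\Hbf_\Cbb)$ and $W'=W(\Gbf'_\Cbb,\Hbf'_\Cbb)$. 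By \ref{reflex-field}(c), the reflex fields are the fixed fields of the Galois orbits of the $W$-orbit of $\mu_x$ inside the cocharacter lattice $X_*(\Hbf)$ and of $\mu_{f(x)}$ inside $X_*(\Hbf')$.

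The conclusion is a linear-algebra exercise. Tensoring with $\Qbb$, the short exact sequences
$$0\to X_*(N^\circ)\otimes\Qbb\to X_*(\Hbf)\otimes\Qbb\to X_*(f(\Hbf))\otimes\Qbb\to 0,$$
$$0\to X_*(f(\Hbf))\otimes\Qbb\to X_*(\Hbf')\otimes\Qbb\to X_*(S^\circ)\otimes\Qbb\to 0$$
carry an action of $\Gal(\Qac/\Qbb)\ltimes W$ factoring through a finite group, so split canonically as modules over this finite group by semisimplicity in characteristic zero. The summands $X_*(N^\circ)\otimes\Qbb$ and $X_*(S^\circ)\otimes\Qbb$ carry the trivial action (Galois-trivial by splitness, Weyl-trivial by centrality), contributing $\Qbb$ to any field of definition. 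Hence the Galois stabilizers of the $W$-orbits of $\mu_x$, of its image in $X_*(f(\Hbf))\otimes\Qbb$, and of $\mu_{f(x)}$ in $X_*(\Hbf')\otimes\Qbb$ all coincide, giving $E(\Gbf,X)=E(f(\Gbf),X)=E(\Gbf',X')$.

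The main obstacle I anticipate is the bookkeeping around the finite components $\pi_0(N)$ and especially $\pi_0(Q)$: the central-split-torus lift $S^\circ$ only covers $Q^\circ$, while $\pi_0(Q)$ may induce nontrivial outer automorphisms on $f(\Gbf)$ that could a priori merge several $f(\Gbf)(\Qac)$-conjugacy classes of cocharacters into one $\Gbf'(\Qac)$-conjugacy class, thereby inflating the Galois stabilizer asymmetrically between the two sides. Verifying that the pure-Shimura-datum structure of $\mu_{f(x)}$, together with the normality of $f(\Gbf)$ in $\Gbf'$, prevents this pathology — or that any such enlargement happens in a Galois-equivariant way that does not change the fixed field — is the technical point hidden behind the clean splitting picture described above.
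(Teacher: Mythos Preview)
Your approach is essentially the paper's: choose compatible maximal $\Qbb$-tori in $\Gbf$ and $\Gbf'$, identify the Weyl groups via the isogeny on derived groups, and observe that the induced map on cocharacter lattices has Galois- and Weyl-trivial kernel and cokernel, so the stabilizers of $[\mu_x]$ and $[\mu_{f(x)}]$ in $\Aut(\Cbb/\Qbb)$ coincide. The paper is marginally more direct in that it does not factor through $f(\Gbf)$ or invoke $\mathrm{Out}(f(\Gbf))$: it simply extends $f(\Tbf)$ to a maximal $\Qbb$-torus $\Tbf'\subset\Gbf'$ and reads off that $\Tbf'/f(\Tbf)$ is split and central from $\Gbf'^{\der}=f(\Gbf)^{\der}$.

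The obstacle you anticipate does not exist: $\pi_0(Q)$ is trivial. For a pure Shimura datum in Deligne's sense the group $\Gbf'$ is a \emph{connected} reductive $\Qbb$-group; $\Gbf$ is likewise connected, hence so is its image $f(\Gbf)$, and therefore the quotient $Q=\Gbf'/f(\Gbf)$ is connected. Thus $Q=Q^\circ$ is itself the split $\Qbb$-torus promised by the hypothesis, and there is no disconnected piece available to induce outer automorphisms of $f(\Gbf)$ or to merge distinct $f(\Gbf)(\Cbb)$-conjugacy classes of cocharacters into a single $\Gbf'(\Cbb)$-class. (By contrast $N=\Ker(f)$ may well be disconnected, but as you note only $N^\circ$ contributes to $X_*\otimes\Qbb$, and $N^\circ$ is central and split.) With this observation your argument is complete and matches the paper's.
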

\begin{proof}

Take $x\in X$, such that $x(\Sbb)\subset\Tbf_\Rbb$ for some maximal $\Qbb$-torus $\Tbf\subset\Gbf$. Then $x'=f(x)$ sends $\Sbb$ into $f(\Tbf)_\Rbb$. Extend $f(\Tbf)$ to a maximal $\Qbb$-torus $\Tbf'$. Then the restriction $f:\Tbf\ra\Tbf'$ is of $\Qbb$-split kernel and cokernel. And by our assumption, $\Ker(f:\Tbf\ra\Tbf')$ is central in $\Gbf$, and $\Cok(f:\Tbf\ra\Tbf')$ is central in $\Gbf'$.

Consequently $f$ extends to normalizers $\Nbf_\Gbf(\Tbf)\ra\Nbf_{\Gbf'}(\Tbf')$, with $W(\Gbf_\Cbb,\Tbf_\Cbb)\isom W(\Gbf'_\Cbb,\Tbf'_\Cbb)$ induced. Moreover, the induced map $\Xbf^\vee(f):f:\Xbf^\vee_\Tbf(\Cbb)\ra\Xbf_{\Tbf'}^\vee(\Cbb)$ is equivariant \wrt the actions of $\Aut(\Cbb/\Qbb)$ and $W(\Gbf_\Cbb,\Tbf_\Cbb)\ra W(\Gbf'_\Cbb,\Tbf'_\Cbb)$, and the actions of these two groups are trivial on the kernel and the cokernel of $\Xbf^\vee(f)$.

Write $E=E(\Gbf,X)$ and $E'=E(\Gbf',X')$, with $\mu=\mu_x:$ and $\mu'=\mu_{x'}=f_*\mu_x$. By \ref{reflex-field}, $\Aut(\Cbb/E)$ resp. $\Aut(\Cbb/E')$ is the stabilizer of $[\mu]$ resp. of $[\mu']$ in $\Xbf^\vee_\Tbf(\Cbb)/W(\Gbf\Cbb,\Tbf_\Cbb)$ resp. $\Xbf^\vee_{\Tbf'}(\Cbb)/W(\Gbf'_\Cbb,\Tbf'_\Cbb)$. Now that  the diferrence between $\Xbf^\vee_\Tbf(\Cbb)$ and $\Xbf^\vee_{\Tbf'}(\Cbb)$ are given by split $\Qbb$-tori, and the Weyl groups are identified, we get the equality $\Aut(\Cbb/E)=\Aut(\Cbb/E')$.\end{proof}

\begin{lemma}\label{pure-kuga-decomposition} Let $(\Pbf,Y)$ be a Kuga datum of the form $\Vbf\rtimes_\rho(\Gbf,X)$ as in \ref{kuga-datum}(1). Write $\Vbf=\oplus\Vbf_i$ for the decomposition of $\Vbf$ into simple $\Qbb$-representations of $\Gbf$. Then

(1) For each $i$, $\Gbf$ stabilizes a symplectic form $\psi_i$ on $\Vbf$ up to scalar, and $\rho$ gives rise to a morphism of pure Shimura data $\rho:(\Gbf,X)\ra\prod_i(\GSp(\Vbf_i),\Hscr(\Vbf_i))$. We write $$\rho:(\Pbf,Y)\ra\prod_i(\Pcal(\Vbf_i),\Ycal(\Vbf_i))=\prod_i\Vbf_i\rtimes(\GSp(\Vbf_i),\Hscr(\Vbf_i))$$ for the extension by $\Vbf$, which fits into a cartesian diagram  $$\xymatrix{ {(\Pbf,Y)} \ar[r]_\rho \ar[d]_\pi & {\prod_i(\Pcal(\Vbf_i),\Ycal(\Vbf_i))} \ar[d]_\pi\\ {(\Gbf,X)} \ar[r]_\rho & {\prod_i(\GSp(\Vbf_i),\Hscr(\Vbf_i))}}$$

(2) There exists an epimorphism of pure Shimura data $\rho':(\Gbf,X)\ra(\Gbf',X')$ such that the resulting morphism $(\Gbf,X)\ot{(\rho',\rho)}\lra(\Gbf',X')\times\prod_i(\GSp(\Vbf_i),\Hscr(\Vbf_i))$ is   of finite kernel. We also write $(\rho',\rho)$ for the morphism $(\Pbf,Y)\ra(\Gbf',X')\times\prod_i(\Pcal(\Vbf_i),\Ycal(\Vbf_i))$, whose kernel is finite.

In particular, $E(\Pbf,Y)=E(\Gbf,X)$ is the composition of $E(\Gbf',X')$ with $E(\rho(\Gbf),\rho_*(X))$.

\end{lemma}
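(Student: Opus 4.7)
The plan splits naturally along the two assertions.

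For part (1) the plan is to produce the symplectic forms $\psi_i$ via Hodge-theoretic polarization and Schur's lemma. Since the Hodge structure $(\Vbf,\rho\circ x)$ is of type $\{(-1,0),(0,-1)\}$ and the Kuga datum axiom \ref{kuga-datum}(1-b) on the center of $\Gbf$ forces polarizability, each $\Gbf$-simple component $\Vbf_i$ admits a $\Gbf$-invariant-up-to-scalar symplectic form $\psi_i$: the space of $\Gbf$-invariant bilinear forms on a simple $\Vbf_i$ is at most one-dimensional by Schur, and a nontrivial such form is furnished by decomposing the polarization of $\Vbf$ into its isotypic pieces (regrouping dual pairs where needed so as to land on each $\Vbf_i$). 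The composition $\Gbf\to\prod_i\GSp(\Vbf_i,\psi_i)$ then sends each $x\in X$ into $\prod_i\Hscr(\Vbf_i)$, giving the morphism of pure Shimura data. To see the square is cartesian, I would compute the fiber product both on groups and on parameter spaces: on groups, $\Gbf\times_{\prod_i\GSp(\Vbf_i)}\prod_i(\Vbf_i\rtimes\GSp(\Vbf_i))$ identifies with $(\oplus_i\Vbf_i)\rtimes\Gbf=\Vbf\rtimes\Gbf=\Pbf$; on spaces, both vertical maps are $\Vbf(\Rbb)$-torsors compatibly with $\rho$, so the pullback of the right-hand torsor along $X\to\prod_i\Hscr(\Vbf_i)$ is a $\Vbf(\Rbb)$-torsor over $X$ with fiber complex structures dictated by $\rho\circ x$, which is exactly $Y$ by \ref{kuga-datum}(1).

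For part (2) the central step is to construct a normal $\Qbb$-subgroup $\Hbf\subset\Gbf$ that is ``$\rho$-effective'' and complementary to $\Ker(\rho)$ up to finite intersection, so that $\Gbf':=\Gbf/\Hbf$ with $\rho':\Gbf\to\Gbf'$ does the job. Using the almost-direct product $\Gbf=\Cbf\cdot\Gbf^\der$ with $\Cbf$ the connected center, I would partition the $\Qbb$-simple factors of $\Gbf^\der$ into those acting trivially on $\Vbf$ via $\rho$ and those acting nontrivially, and on the central side apply Maschke's theorem to the Galois action on $X_*(\Cbf)\otimes\Qbb$ (which factors through a finite quotient) to split $\Cbf$, up to isogeny, into the neutral component of $\Cbf\cap\Ker(\rho)$ and a complementary $\Qbb$-subtorus $\Cbf_1$. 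Take $\Hbf$ to be the almost-direct product of $\Cbf_1$ with the $\rho$-nontrivial factors of $\Gbf^\der$; standard results on quotients of Shimura data make $(\Gbf',X':=\rho'_*X)$ into a pure Shimura datum, and $\rho'$ is an epimorphism onto it. By construction $\Ker(\rho')\cap\Ker(\rho)$ is finite, and the dimension identity $\dim\Gbf'+\dim\rho(\Gbf)=\dim\Gbf$ together with connectedness forces $(\rho',\rho):\Gbf\to\Gbf'\times\rho(\Gbf)$ to be surjective, hence an isogeny.

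Granting this, the reflex-field statements follow routinely. Lemma \ref{split-tori} applied to the isogeny $(\rho',\rho):(\Gbf,X)\to(\Gbf'\times\rho(\Gbf),X'\times\rho_*(X))$---whose kernel and cokernel are finite, so the split-torus hypotheses are vacuous---gives $E(\Gbf,X)=E(\Gbf'\times\rho(\Gbf),X'\times\rho_*(X))$, and the product formula in \ref{reflex-field}(a) identifies the latter with $E(\Gbf',X')\cdot E(\rho(\Gbf),\rho_*(X))$; the equality $E(\Pbf,Y)=E(\Gbf,X)$ is a further instance of \ref{reflex-field}(a), since $(\Gbf,X)$ is a pure section of $(\Pbf,Y)$. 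The main obstacle I anticipate is the rational decomposition of the center $\Cbf$ up to isogeny into its $\rho$-trivial and $\rho$-effective parts: one needs Maschke-type semisimplicity for finite Galois representations on cocharacter lattices over $\Qbb$, and must keep careful track of the various finite intersections so that $(\rho',\rho)$ is genuinely an isogeny onto a product rather than merely a map with finite kernel. Once that decomposition is secured, part (1) is a direct verification and the reflex-field assertions drop out of the lemmas already at hand.
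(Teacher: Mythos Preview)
Your proposal is correct and follows essentially the same route as the paper. For part (1) the paper simply cites \cite{milne-moduli} 2.1, 6.3 and 10.3, so your polarization/Schur sketch is in fact more detailed than what the author writes. For part (2) the paper also builds a normal complement to the neutral component $\Hbf$ of $\Ker\rho$ by separately treating the semisimple part (take the complementary product $\Nbf'$ of $\Qbb$-simple factors of $\Gbf^\der$) and the central part (choose a $\Qbb$-subtorus $\Cbf'$ with $\Cbf'\Cbf_\Hbf=\Cbf$ and $\Cbf'\cap\Cbf_\Hbf$ finite), then sets $\Nbf=\Cbf'\Nbf'$ and $\Gbf'=\Gbf/\Nbf$; this is exactly your $\Hbf$ and your $\Gbf'$, with the names swapped. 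The paper does not spell out the Maschke argument for the torus splitting nor the surjectivity/dimension count you give, and it leaves the reflex-field ``In particular'' unproved; your deduction of it via Lemma \ref{split-tori} and the product formula in \ref{reflex-field}(a) is the intended one.
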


\begin{proof}
(1) See e.g. \cite{milne-moduli} 2.1, 6.3 and 10.3. 


(2) Write $\Hbf$ for the neutral component of $\Ker(\rho:\Gbf\ra\prod_i\GSp(\Vbf_i))$.  It suffices to find a normal $\Qbb$-subgroup $\Nbf$ of $\Gbf$ such that map $\Nbf\times\Hbf$  induced by inclusions into $\Gbf$ is surjective with finite kernel.

At the level of derived groups, we have $\Hbf^\der$ as a normal $\Qbb$-subgroup of $\Gbf^\der$, both being connected and semi-simple. By taking the decomposition of $\Gbf^\der$ into the almost direct product of simple $\Qbb$-factors, we find a connected semi-simple normal $\Qbb$-subgroup $\Nbf'$ such that $\Nbf'\Hbf^\der=\Gbf^\der$ and that $\Nbf'\cap\Hbf^\der$ is finite.

It remains to find a $\Qbb$-torus serving as the connected center of $\Nbf$. Write $\Cbf$ resp. $\Cbf_\Hbf$ for the connected center of $\Gbf$ resp. of $\Hbf$. Then there exists a $\Qbb$-subtorus $\Cbf'$ of $\Cbf$ such that $\Cbf'\cap\Cbf_\Hbf$ is finite and $\Cbf=\Cbf'\Cbf_\Hbf$. It is clear that $\Cbf'$ centralizes $\Nbf'$, and it suffices to take $\Nbf=\Cbf'\Nbf'$.\end{proof}

\begin{lemma}\label{faithful-rigid-decomposition}

Let $(\Pbf,Y)=\Vbf\rtimes_\rho(\Gbf,X)$ be a Kuga datum, such that $\rho:\Gbf\ra\GLbf_\Qbb(\Vbf)$ is faithful. Decompose $\Vbf=\oplus_{i\in I\coprod J}\Vbf_i$ into simple $\Qbb$-subrepresentations of $\Gbf$, such that $\Gbf^\der$ acts on $\Vbf_i$ trivially for $i\in I$, and $\Vbf_j^{\Gbf^\der}=0$ for $j\in J$. Write $\Ubf=\oplus_I\Vbf_i$ and $\Wbf=\oplus_J\Vbf_j$. Then by choosing symplectic forms $\psi_i$ on $\Vbf_i$, we get morphisms of pure Shimura data $$\alpha:(\Gbf,X)\ra(\Gbf_I,X_I):=\prod_I(\GSp(\Vbf_i),\Hscr(\Vbf_i))$$ $$\beta:(\Gbf,X)\ra(\Gbf_J,X_J):=\prod_J(\GSp(\Vbf_j),\Hscr(\Vbf_j)).$$ Write $(\Gbf_\alpha,X_\alpha)$ resp. $(\Gbf_\beta,X_\beta)$ for the image of $\alpha$ resp. $\beta$. Then

(1) $\Gbf^\der\subset\Ker(\alpha:\Gbf\ra\Gbf_I)$, and the image of  $\alpha$ is a toric subdatum of $(\Gbf_I,X_I)$, while $\Wbf^{\Gbf^\der_\beta}=0$.

(2)   $(\alpha,\beta):(\Gbf,X)\ra(\Gbf_\alpha,X_\alpha)\times(\Gbf_\beta,X_\beta)$ satisfies the conditions in \ref{split-tori}, and the same is true for $$(\Ubf\oplus\Wbf)\rtimes_{(\alpha,\beta)}(\Gbf,X)\ra(\Ubf\rtimes(\Gbf_\alpha,X_\alpha))\times(\Wbf\rtimes(\Gbf_\beta,X_\beta)).$$ In particular, $E(\Gbf,X)=E(\Gbf_\alpha,X_\alpha)E(\Gbf_\beta,X_\beta)$.
\end{lemma}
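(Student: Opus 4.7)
The plan is to dispatch part (1) directly from the decomposition $\Vbf=\Ubf\oplus\Wbf$ and then, for part (2), to check the three hypotheses of \ref{split-tori}: normality of the image, split kernel, split cokernel. For (1), since $\Gbf^\der$ acts trivially on each $\Vbf_i$ ($i\in I$) by hypothesis, it acts trivially on $\Ubf$, so $\alpha$ factors through the abelianization $\Tbf_\Gbf=\Gbf/\Gbf^\der$. Hence $\Gbf_\alpha$ is a quotient of the torus $\Tbf_\Gbf$, so is itself a torus, and $(\Gbf_\alpha,X_\alpha)$ is toric because $X_\alpha=\alpha(X)$ is a $\Gbf_\alpha(\Rbb)$-conjugacy class in an abelian group, hence reduces to a single point. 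The assertion $\Wbf^{\Gbf_\beta^\der}=0$ follows because the surjectivity of $\beta$ implies $\beta(\Gbf^\der)=\Gbf_\beta^\der$, so the $\Gbf^\der$-action on $\Wbf$ factors through $\Gbf_\beta^\der$, whence $\Wbf^{\Gbf_\beta^\der}=\Wbf^{\Gbf^\der}=\bigoplus_J\Vbf_j^{\Gbf^\der}=0$ by hypothesis on $J$.

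For part (2) I verify first the easier items. The kernel of $(\alpha,\beta)$ is trivial since $\Ker(\alpha)\cap\Ker(\beta)$ acts trivially on $\Ubf\oplus\Wbf=\Vbf$ and $\rho$ is faithful. Normality of the image is a direct conjugation calculation: given $(a,b)\in\Gbf_\alpha\times\Gbf_\beta$ and a lift $b=\beta(h)$, conjugating $(\alpha(g),\beta(g))$ by $(a,b)$ returns $(\alpha(hgh^\inv),\beta(hgh^\inv))$, using that $\alpha$ is abelian-valued. A key intermediate observation is that $\beta$ restricts to an isomorphism $\Gbf^\der\isom\Gbf_\beta^\der$: surjectivity is standard, and injectivity follows because any element of $\Gbf^\der\cap\Ker(\beta)$ acts trivially on $\Wbf$ (being in $\Ker(\beta)$) and on $\Ubf$ (being in $\Gbf^\der$), hence on $\Vbf$, so vanishes by faithfulness of $\rho$.

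The main obstacle is showing the cokernel is a split $\Qbb$-torus. From the previous step, $(\alpha,\beta)(\Gbf^\der)=\{1\}\times\Gbf_\beta^\der=(\Gbf_\alpha\times\Gbf_\beta)^\der$ (the last equality because $\Gbf_\alpha$ is a torus), so $\Cok((\alpha,\beta))$ equals the cokernel of the induced map $\bar f:\Tbf_\Gbf\to\Gbf_\alpha\times\Tbf_{\Gbf_\beta}$ on abelianizations. The composition $\pi_{\Gbf_\alpha}\circ\bar f=\alpha^\ab$ is surjective onto $\Gbf_\alpha$, so a snake-lemma argument exhibits $\Cok(\bar f)$ as a quotient of $\Tbf_{\Gbf_\beta}$. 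I then identify $\Tbf_{\Gbf_\beta}$ as a subtorus of the split $\Qbb$-torus $\Gbb_\mrm^J=\prod_J\GSp(\Vbf_j)^\ab$ by checking that $\Gbf_\beta^\der=\Gbf_\beta\cap\prod_J\Sp(\Vbf_j)$: the right-hand side is a semisimple normal subgroup of $\Gbf_\beta$ (its image in the abelian $\Tbf_{\Gbf_\beta}$ being trivial by semisimplicity), hence contained in $\Gbf_\beta^\der$. Since subtori and quotients of split $\Qbb$-tori are split, $\Cok(\bar f)$ is split, and \ref{split-tori} applies. The equality $E(\Gbf,X)=E(\Gbf_\alpha,X_\alpha)E(\Gbf_\beta,X_\beta)$ then follows by \ref{reflex-field}(a). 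For the Kuga version, the $\Ubf\oplus\Wbf$-component of the morphism is the identity, so its kernel and cokernel coincide with those of $(\alpha,\beta)$, while normality of the image in the target reduces to the same conjugation computation performed inside the semidirect products.
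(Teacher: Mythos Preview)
Your argument is correct in outline and in several places more explicit than the paper's own proof (you verify normality of the image, which the paper leaves tacit). The one step that needs more care is the assertion that $N:=\Gbf_\beta\cap\prod_J\Sp(\Vbf_j)$ is semisimple: a normal subgroup of a reductive group is only reductive a priori, and you must rule out a central torus in $N^\circ$. This follows because such a torus, being normal and connected in the connected reductive group $\Gbf_\beta$, would be central in $\Gbf_\beta$; since each $\Vbf_j$ is $\Gbf_\beta$-irreducible, Schur's lemma forces the center of $\Gbf_\beta$ into the scalars $\prod_J\Gbb_\mrm$, whose intersection with $\prod_J\Sp(\Vbf_j)$ is the finite group $\prod_J\mu_2$. (Strictly one obtains $N^\circ=\Gbf_\beta^\der$, so $\Tbf_{\Gbf_\beta}$ is isogenous to, rather than literally a subtorus of, $\Gbb_\mrm^J$; this still suffices for splitness.)

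The paper reaches the same key point---that the connected center of $\Gbf_\beta$ is $\Qbb$-split---by a different route: it simply invokes a ``rigidity characterization'' from the companion paper \cite{chen-rigid}, which asserts directly that a reductive group acting faithfully and rigidly on a rational Hodge structure of type $\{(-1,0),(0,-1)\}$ (subject to the Kuga-datum constraint on the center) has split connected center. Your approach trades that external citation for the concrete embedding into $\prod_J\GSp(\Vbf_j)$ plus the Schur argument above, making the proof self-contained at the cost of a little extra work.
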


\begin{proof}

(1) This is clear by the definition of $(\Ubf,\alpha)$ and $(\Wbf,\beta)$.

(2) It is clear that $(\alpha,\beta)$ is injective as $\rho:\Gbf\ra\GLbf_\Qbb(\Vbf)$ is already injective. Now that $\Gbf_\alpha$ is a $\Qbb$-torus, the image of $\Gbf^\der$ falls into $\Gbf_\beta$, which differs from $\Gbf_\beta$ by a central $\Qbb$-torus. It follows that the cokernel of $(\alpha,\beta)$ is a $\Qbb$-torus.

Now that the action of $\Gbf_\beta$ on $\Wbf$ is faithful and rigid, by \ref{rigidity-characterization} we see that the connected center of $\Gbf_\beta$ is a split $\Qbb$-tori. As a consequence, the compact part of the connected center of $\Gbf$ is killed by $\beta$, and is thus sent into $\Gbf_\alpha$ injectively by $\alpha$. Since $\alpha(\Gbf)=\Gbf_\alpha$, the cokernel of $\alpha$ is also split. Hence $(\alpha,\beta)$ satisfies \ref{split-tori}.

The case of Kuga data is similar: it suffices to see that the morphism $$(\Ubf\oplus\Wbf)\rtimes_{(\alpha,\beta)}(\Gbf,X)\ra(\Ubf\rtimes(\Gbf_\alpha,X_\alpha))\times(\Wbf\rtimes(\Gbf_\beta,X_\beta))$$ is injective and is isomorphic on the unipotent radical $\Ubf\otimes\Wbf$, and thus the cokernel is the same as the pure case.\end{proof}

From the discussion above we deduce the following

\begin{corollary}\label{triple-product-decomposition} 

Let $(\Pbf,Y)=\Vbf\rtimes_\rho(\Gbf,X)$ be a Kuga datum. Then there exists a morphism of Kuga data $f:(\Pbf,Y)\ra(\Pbf^\cm,Y^\cm)\times(\Pbf^\rig,Y^\rig)\times(\Gbf',X')$, where $(\Pbf^\cm,Y^\cm)=\Ubf\rtimes(\Tbf,x)$ is a Kuga datum with $(\Tbf,x)$ toric, $(\Pbf^\rig,Y^\rig)=\Wbf\rtimes(\Gbf^\rig,X^\rig)$ a rigid Kuga datum, and $(\Gbf',X')$ a pure Shimura datum, with $\Tbf\ra\GLbf_\Qbb(\Ubf)$ and $\Gbf^\rig\ra\GLbf_\Qbb(\Wbf)$ both faithful, such that:

(1) $f:\Gbf\ra\Tbf\times\Gbf^\rig\times \Gbf'$ and  $f:\Pbf\ra\Pbf^\cm\times\Pbf^\rig\times\Gbf'$ satisfy the conditions in \ref{split-tori} with finite kernel, and $E(\Gbf,X)$ equals the composition of $E(\Tbf,x)$, $E(\Gbf^\rig,X^\rig)$ and $E(\Gbf',X')$. 

(2) Take congruence subgroups $\Gamma^\cm=\Gamma_\Ubf\rtimes\Gamma_\Tbf\subset\Pbf^\cm(\Qbb)^+$, $\Gamma^\rig=\Gamma_\Wbf\rtimes\Gamma_{\Gbf^\rig}\subset\Pbf^\rig(\Qbb)^+$,  $\Gamma'\subset\Gbf'(\Qbb)^+$, and $\Gamma=f^\inv(\Gamma^\cm\times\Gamma^\rig\times\Gamma')$, then the induced map of connected Kuga varieties $$f:\Gamma\bsh Y^+\ra \Gamma^\cm\bsh Y^\cm\times \Gamma^\rig\bsh Y^{\rig,+}\times\Gamma'\bsh X'^+$$ is an isomorphism, with $\Gamma^\cm\bsh Y^\cm$ an CM abelian variety, $\Gamma^\rig\bsh Y^{\rig,+}$ a connected rigid Kuga variety, and $\Gamma'\bsh X'^+$ a connected pure Shimura variety. The isomorphism respects the structures of canonical models of some number field $F$ containing the reflex fields of $(\Pbf,Y)$.

\end{corollary}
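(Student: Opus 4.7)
My strategy is to cascade the two decomposition lemmas above to build $f$ at the level of Kuga data, then to refine to an isomorphism of connected Kuga varieties by absorbing the finite kernel into a well-chosen level. First I apply Lemma \ref{pure-kuga-decomposition}(2) to the morphism $\rho:(\Gbf,X)\to\prod_i(\GSp(\Vbf_i),\Hscr(\Vbf_i))$ to obtain an epimorphism of pure Shimura data $\rho':(\Gbf,X)\to(\Gbf',X')$ such that the combined map $(\rho',\rho):(\Gbf,X)\to(\Gbf',X')\times(\rho(\Gbf),\rho_*(X))$ has finite kernel. Setting $(\Gbf_0,X_0):=(\rho(\Gbf),\rho_*(X))$, the induced representation $\rho_0:\Gbf_0\to\GLbf_\Qbb(\Vbf)$ is then faithful, so Lemma \ref{faithful-rigid-decomposition} applies to $\Vbf\rtimes_{\rho_0}(\Gbf_0,X_0)$: decomposing $\Vbf=\Ubf\oplus\Wbf$ according to trivial or non-trivial $\Gbf_0^\der$-action on simple $\Qbb$-components yields morphisms $\alpha:(\Gbf_0,X_0)\to(\Tbf,x)$ (onto a toric datum with $\Tbf$ acting faithfully on $\Ubf$) and $\beta:(\Gbf_0,X_0)\to(\Gbf^\rig,X^\rig)$ (acting faithfully and rigidly on $\Wbf$), with $(\alpha,\beta)$ satisfying Lemma \ref{split-tori}.

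Next I form $f:=(\rho',\alpha\circ\rho,\beta\circ\rho):(\Gbf,X)\to(\Gbf',X')\times(\Tbf,x)\times(\Gbf^\rig,X^\rig)$. The finite kernel of $(\rho',\rho)$ together with injectivity of $(\alpha,\beta)$ gives finite kernel of $f$; the cokernel is a split $\Qbb$-torus, using that both Lemma \ref{pure-kuga-decomposition} and Lemma \ref{faithful-rigid-decomposition} produce split-torus cokernels and that the non-compact part of the center of $\Gbf$ only survives in these factors. Hence $f$ satisfies Lemma \ref{split-tori}. Extending by the identity on the unipotent radical via the $\Gbf_0$-equivariant splitting $\Vbf=\Ubf\oplus\Wbf$, I obtain $f:(\Pbf,Y)\to(\Gbf',X')\times(\Pbf^\cm,Y^\cm)\times(\Pbf^\rig,Y^\rig)$ with $(\Pbf^\cm,Y^\cm)=\Ubf\rtimes_\alpha(\Tbf,x)$ and $(\Pbf^\rig,Y^\rig)=\Wbf\rtimes_\beta(\Gbf^\rig,X^\rig)$. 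Since $(\Tbf,x)$ is toric and $\Tbf\to\GLbf_\Qbb(\Ubf)$ is faithful, Example \ref{example}(2) identifies $(\Pbf^\cm,Y^\cm)$ with the datum of a CM abelian variety; $(\Pbf^\rig,Y^\rig)$ is rigid by construction of $\beta$; and $(\Gbf',X')$ is pure. The reflex-field statement in (1) follows by iterating Lemma \ref{split-tori} and invoking the product formula $E(\Pbf_1\times\Pbf_2,Y_1\times Y_2)=E(\Pbf_1,Y_1)E(\Pbf_2,Y_2)$ from Definition \ref{reflex-field}(a).

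For (2), the group morphism $f$ is not an isogeny: its cokernel may be a positive-dimensional split $\Qbb$-torus. The crucial observation is that this cokernel is central in the target, and central tori fix every point of the Hodge conjugacy class, so the induced map on Hermitian symmetric spaces $X^+\to X^{\rig,+}\times X'^+$ (where the toric factor $(\Tbf,x)$ contributes the single point $x$) is a diffeomorphism; combined with the algebraic isomorphism $\Vbf\isom\Ubf\oplus\Wbf$ on unipotent radicals, this gives a diffeomorphism $Y^+\isom Y^\cm\times Y^{\rig,+}\times X'^+$. Taking $\Gamma:=f^\inv(\Gamma^\cm\times\Gamma^\rig\times\Gamma')$, the induced map of quotients is then bijective on complex points: injectivity follows at once from the bijection at the $Y^+$ level, and surjectivity from the finite-kernel/central-cokernel structure of $f$, possibly after shrinking $\Gamma^\cm\times\Gamma^\rig\times\Gamma'$ to lie in $f(\Pbf(\Qbb)_+)$. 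Compatibility with canonical models over a common number field $F$ containing the three reflex fields is automatic from functoriality (Definition-Proposition \ref{canonical-model}). The main technical obstacle I foresee is justifying the diffeomorphism claim on the symmetric domains: one must check carefully that the split-torus cokernel of $f$ at the group level contributes no new directions to the $\Pbf(\Rbb)_+$-orbit $Y^+$, equivalently that such central split tori lie in the isotropy of every Hodge cocharacter in $Y$, which I expect to follow from the standard description of $Y^+$ as $\Pbf(\Rbb)_+/K_\infty$ by a stabilizer containing the connected center.
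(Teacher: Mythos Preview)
Your approach is correct and matches the paper's: part (1) is exactly the concatenation of Lemmas \ref{pure-kuga-decomposition} and \ref{faithful-rigid-decomposition}, and part (2) rests on the same key observation that the cokernel of $f$ is a central split $\Qbb$-torus acting trivially on the Hodge orbit. The paper phrases (2) at the congruence-subgroup level---one finds a central split $\Qbb$-torus $\Hbf$ with $f(\Gbf)\Hbf$ equal to the full product, so $f(\Gamma_\Gbf)$ and $\Gamma_\Tbf\times\Gamma_{\Gbf^\rig}\times\Gamma'$ differ only by central elements stabilizing $\Gamma_\Vbf$---and this is precisely what secures \emph{injectivity} of the quotient map (the $Y^+$-level bijection alone does not, since $f(\Gamma)$ may be strictly smaller than the product), so no shrinking of levels is needed.
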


Hence up to changing the level, a connected Kuga variety admits a decomposition into a triple product of the form above. We call $(\Pbf^\cm,Y^\cm)$ resp. $(\Pbf^\rig,Y^\rig)$ resp. $(\Gbf',X')$ the CM factor resp. the rigid factor resp. the pure factor of $(\Pbf,Y)$.

\begin{proof}

(1) The affirmation for Kuga data is just a combination of \ref{pure-kuga-decomposition} and \ref{faithful-rigid-decomposition}.

(2) Since $f:\Pbf\ra\Pbf^\cm\times\Pbf^\rig\times\Gbf'$ is of finite kernel, $\Gamma=f^\inv(\Gamma^\cm\times\Gamma^\rig\times\Gamma')$ is a congruence subgroup, and we have $\Gamma=\Gamma_\Vbf\rtimes\Gamma_\Gbf$, where $\Gamma_\Vbf=\Gamma_\Ubf\oplus\Gamma_\Wbf$, and $\Gamma_\Gbf=f^\inv(\Gamma_\Tbf\times\Gamma_{\Gbf^\rig}\times\Gamma')$. By \ref{split-tori}, one can find a central split $\Qbb$-torus $\Hbf$ in $\Tbf\times\Gbf^\rig\times\Gbf'$ such that $f(\Gbf)\Hbf=\Tbf\times\Gbf^\rig\times\Gbf'$. Therefore $f(\Gamma_\Gbf)$ only differs from $\Gamma_\Tbf\times\Gamma_{\Gbf^\rig}\times\Gamma'$ by a central part stabilizing $\Gamma_\Ubf\oplus\Gamma_\Wbf=\Gamma_\Vbf$, hence the required isomorphism.

As for the compatibility with canonical models, it suffices to take $F$ as the composition of the fields of definition of $\Gamma^\cm\bsh Y^\cm$, of $\Gamma^\rig\bsh Y^{\rig,+}$ and of $\Gamma'\bsh X'^+$, which are finite abelian extensions of the respective reflex fields.
\end{proof}

\begin{lemma}\label{subdatum-compatibility}Let $(\Pbf,Y)=\Vbf\rtimes(\Gbf,X)=(\Pbf^\cm,Y^\cm)\times(\Pbf^\rig,Y^\rig)\times(\Gbf',X')$ be a decomposition of the type in \ref{kuga-variety-decomposition}, with $(\Pbf^\cm,Y^\cm)=\Ubf\rtimes(\Tbf,x)$, $(\Pbf^\rig,Y^\rig)=\Wbf\rtimes(\Gbf^\rig,X^\rig)$, $\Cbf^\rig$ the connected center of $\Gbf^\rig$, and $\Cbf'$ the connected center of $\Gbf'$. Write $\Cbf=\Tbf\times\Cbf^\rig\times \Cbf'$ for the connected center of $\Gbf$. Then a $\Cbf$-special subdatum of $(\Pbf,Y)$ is of the form $(\Pbf'^\cm,Y'^\cm)\times(\Pbf'^\rig,Y'^\rig)\times(\Gbf'',X'')$, with $(\Pbf'^\cm,Y'^\cm)\subset(\Pbf^\cm,Y^\cm)$ a $\Tbf$-special subdatum, $(\Pbf'^\rig,Y'^\rig)\subset(\Pbf^\rig,Y^\rig)$ a $\Cbf^\rig$-subdatum, and $(\Gbf'',X'')\subset(\Gbf,X)$ a $\Cbf'$-special subdatum.

\end{lemma}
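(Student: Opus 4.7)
The plan is to exploit the central position of $\Cbf = \Tbf \times \Cbf^\rig \times \Cbf'$ within the product group $\Gbf = \Tbf \times \Gbf^\rig \times \Gbf'$ to split any $\Cbf$-special subdatum factor-wise. I proceed in three stages: first rewriting the subdatum in the standard form of \ref{kuga-datum}(3); then decomposing its pure part as a product along the three factors; and finally descending the decomposition to the unipotent radical and the translation $v$.

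By \ref{kuga-datum}(3), any Kuga subdatum $(\Pbf_1, Y_1)$ of $(\Pbf, Y)$ has the shape $\Pbf_1 = \Vbf_1 \rtimes v\Gbf_1 v^\inv$ with $(\Gbf_1, X_1) \subset (\Gbf, X)$ a pure subdatum, $\Vbf_1 \subset \Vbf$ a $\Gbf_1$-subrepresentation, and $v \in \Vbf(\Qbb)$; since $\pi: \Pbf \to \Gbf$ kills $\Vbf$ and restricts to the identity on $\Gbf$, one has $\pi(\Pbf_1) = \Gbf_1$, so the $\Cbf$-special hypothesis becomes $Z(\Gbf_1)^\circ = \Cbf$. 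The toric CM direction then splits off at once: $\Tbf$ is a direct factor of $\Gbf$ contained in $Z(\Gbf_1)^\circ \subset \Gbf_1$ and meeting the complementary direct factor $\{1\} \times \Gbf^\rig \times \Gbf'$ trivially, which yields $\Gbf_1 = \Tbf \times \Gbf_1^{\mathrm{nc}}$ for $\Gbf_1^{\mathrm{nc}} := \Gbf_1 \cap (\Gbf^\rig \times \Gbf')$, with $Z(\Gbf_1^{\mathrm{nc}})^\circ = \Cbf^\rig \times \Cbf'$. To further decompose $\Gbf_1^{\mathrm{nc}}$ as a product $\Gbf_1^\rig \times \Gbf_1' \subset \Gbf^\rig \times \Gbf'$, I analyse the simple-factor decomposition of its derived group: by the construction in \ref{triple-product-decomposition} the simple normal $\Qbb$-factors of $\Gbf^{\rig, \der}$ (acting faithfully on $\Wbf$) and of $(\Gbf')^\der$ (lying in $\ker \rho$) form two disjoint families inside $\Gbf^\der$, and the claim is that each simple factor of $(\Gbf_1^{\mathrm{nc}})^\der$ projects nontrivially to only one of these two families.

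Once the product decomposition $\Gbf_1 = \Tbf \times \Gbf_1^\rig \times \Gbf_1'$ is established, the splitting $\Vbf = \Ubf \oplus \Wbf$ (with $\Ubf$ acted on only by $\Tbf$ and $\Wbf$ only by $\Gbf^\rig$) forces any $\Gbf_1$-subrepresentation $\Vbf_1$ to split as $\Vbf_1 = \Ubf_1 \oplus \Wbf_1$, and writing $v = u + w \in \Ubf(\Qbb) \oplus \Wbf(\Qbb)$ yields the factor-wise decomposition $(\Pbf_1, Y_1) = (\Ubf_1 \rtimes u\Tbf u^\inv) \times (\Wbf_1 \rtimes w\Gbf_1^\rig w^\inv) \times (\Gbf_1', X_1')$, with the $\Tbf$-, $\Cbf^\rig$-, $\Cbf'$-special conditions on the three factors falling out of $Z(\Gbf_1)^\circ = \Tbf \times \Cbf^\rig \times \Cbf'$. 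The crux of the argument is the product decomposition of $\Gbf_1^{\mathrm{nc}}$: centrality of $\Cbf^\rig \times \Cbf'$ alone does not rule out a simple factor of $(\Gbf_1^{\mathrm{nc}})^\der$ embedding diagonally across a pair of isomorphic simple components---one from $\Gbf^{\rig, \der}$ and one from $(\Gbf')^\der$---and closing this gap requires invoking the specific form of the decomposition in \ref{triple-product-decomposition} together with the compatibility of the pure subdatum $(\Gbf_1, X_1)$ with the product Hodge structure on $X = \{x\} \times X^\rig \times X'$, so that a diagonal simple factor would yield incompatible Hodge-theoretic conditions on the two projections.
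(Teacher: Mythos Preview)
Your reduction to the pure subdatum $(\Gbf_1,X_1)$ and the splitting off of the toric factor $\Tbf$ are fine and parallel the paper. The divergence is at what you correctly identify as the crux: decomposing $\Gbf_1^{\mathrm{nc}}\subset\Gbf^\rig\times\Gbf'$ as a product. You attack this through the simple-factor decomposition of $(\Gbf_1^{\mathrm{nc}})^\der$ and flag the diagonal obstruction, but your proposed remedy---``incompatible Hodge-theoretic conditions on the two projections''---is not spelled out, and it is not clear it can be made to work without further input. A simple $\Qbb$-factor embedded diagonally in $\Gbf^{\rig,\der}\times(\Gbf')^\der$ certainly centralises all of $\Cbf^\rig\times\Cbf'$, and nothing you have written about the product Hodge structure on $X^\rig\times X'$ visibly forbids a Shimura subdatum supported on such a diagonal. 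As it stands this is a genuine gap.

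The paper's route is shorter and exploits the $\Cbf$-special hypothesis more directly. Rather than passing to the derived group, it applies Goursat's lemma to the whole reductive group: writing $\Hbf_1=\pr^\rig(\Gbf_1^{\mathrm{nc}})$ and $\Hbf_2=\pr'(\Gbf_1^{\mathrm{nc}})$, if $\Gbf_1^{\mathrm{nc}}\neq\Hbf_1\times\Hbf_2$ then Goursat presents $(\Gbf_1^{\mathrm{nc}},X_1^{\mathrm{nc}})$ as the graph of an epimorphism of Shimura data $(\Hbf_1,X_1)\to(\Hbf_2,X_2)$, whence the connected centre of $\Gbf_1^{\mathrm{nc}}$ is itself the graph of the induced map $\Cbf^\rig\to\Cbf'$---a \emph{proper} $\Qbb$-subtorus of $\Cbf^\rig\times\Cbf'$. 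This contradicts $Z(\Gbf_1^{\mathrm{nc}})^\circ=\Cbf^\rig\times\Cbf'$ immediately. The moral is that the hypothesis is about the centre, and Goursat detects failure-to-be-a-product precisely as a constraint shrinking the centre; no excursion through the semisimple part is needed, and your diagonal worry never arises.
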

\begin{proof}
It suffices to check that $\Cbf$-special subdata of $(\Tbf,x)\times(\Gbf^\rig,X^\rig)\times(\Gbf',X')$ are of the product form required in the claim.

Write $\pr^\cm$ resp. $\pr^\rig$ resp. $\pr'$ the projection from $(\Gbf,X)$ onto the respective pure factors. Then for a $\Cbf$-special subdatum $(\Hbf,X_\Hbf)$ of $(\Gbf,X)$, its image under $\pr^\cm$ resp. $\pr^\rig$ resp. $\pr'$ equals $(\Tbf,x)$ resp. is $\Cbf^\rig$-special in $(\Gbf^\rig,X^\rig)$ resp. is $\Cbf'$-special in $(\Gbf',X')$. Write $(\Hbf_1,X_1)$ for $\pr^\rig(\Hbf,X_\Hbf)$ and $(\Hbf_2,X_2)$ for $\pr'(\Hbf,X_\Hbf)$. It remains to show that the projection of $(\Hbf,X_\Hbf)$ in $(\Gbf^\rig,X^\rig)\times(\Gbf',X')$ is of the form $(\Hbf_1,X_1)\times(\Hbf_2,X_2)$. 

Assume for the moment that  $(\Pbf,Y)$ is rigid, which excludes the factor $(\Tbf,x)$. If $(\Hbf,X_\Hbf)$ is not of the product form, then using arguments in Goursat's lemma  (cf.\cite{lang-algebra} Chap.I, ex.5), one sees that $(\Hbf,X_\Hbf)\subset(\Gbf,X)$ is given as the graph of some epimorphism $(\Hbf_1,X_1)\ra(\Hbf_2,X_2)\subset(\Gbf',X')$. It then follows that the connected center of $\Hbf$ is also the graph of $\Cbf^\rig\ra\Cbf'$, which is a proper $\Qbb$-subtorus of $\Cbf=\Cbf^\rig\times\Cbf'$, contradicting the assumption that $(\Hbf,X_\Hbf)$ is $\Cbf$-special.

The case when $(\Tbf,x)$ appears is similar. Thus $(\Hbf,X_\Hbf)$ is of the product form as is required.\end{proof}

\section{Galois conjugates of Kuga varieties}

In this section we review some functorial properties of canonical models, and state the main theorem and the main corollary. A more detailed sketch is found in the appendix.

\begin{definition-proposition}\label{conjugate}[cf.\ref{functoriality}]
Let $(\Pbf,Y)$ be a Kuga datum, $E=E(\Pbf,Y)$ its reflex field, and $\tau\in\Aut(\Cbb/\Qbb)$ an arbitrary element. Then there exists a Kuga datum $(\Pbf^\tau,Y^\tau)$ with reflex field $\tau(E)$, an isomorphism of topological groups $\psi^\tau:\Pbf(\adele)\ra\Pbf^\tau(\adele)$, and an isomorphism of proschemes over $\tau(E)$ $$\phi^\tau:\tau(M(\Pbf,Y))\ra M(\Pbf^\tau,Y^\tau)$$ equivariant \wrt $\psi^\tau:\Pbf(\adele)\ra\Pbf^\tau(\adele)$ (by Hecke translations from the right). The isomorphism is functorial in the following sense:

(1) When $\tau\in\Aut(\Cbb/E)$, $(\Pbf^\tau,Y^\tau)$ is isomorphic to $(\Pbf,Y)$ as Kuga data, and $\phi^\tau$ is an isomorphism of proschemes over $E$.

(2) Take $K\subset\Pbf(\adele)$ \cosg, and $K^\tau=\psi^\tau(K)\subset\Pbf^\tau(\adele)$. Then $\phi^\tau$ induces an isomorphism at finite level $$\phi^\tau:\tau(M_K(\Pbf))\ra M_{K^\tau}(\Pbf^\tau,Y^\tau)$$ over $\tau(E)$.

(3) If $f:(\Pbf,Y)\ra(\Pbf',Y')$ is a morphism of Kuga data, then $\tau$ transfers it into a morphism $f^\tau:(\Pbf^\tau,Y^\tau)\ra(\Pbf'^\tau,Y'^\tau)$, together with a commutative diagram of proschemes over $\tau(E(\Pbf,Y))$ $$\xymatrix{ \tau(M(\Pbf,Y)) \ar[d]^{\tau(f)} \ar[r]^{\phi^\tau} & M(\Pbf^\tau,Y^\tau) \ar[d]^{f^\tau} \\ \tau(M(\Pbf',Y')) \ar[r]^{\phi^\tau} & M(\Pbf'^\tau,Y'^\tau)}$$ which is equivariant with to the Hecke translations in the commutative diagram below

$$\xymatrix{ \Pbf(\adele) \ar[d]^f \ar[r]^{\psi^\tau} & \Pbf^\tau(\adele) \ar[d]^{f^\tau} \\ \Pbf'(\adele) \ar[r]^{\psi^\tau} & \Pbf'^\tau(\adele)}$$ with isomorphic horizontal arrows. In particular it also induces commuttive diagrams at finite levels in the sense of (2) above.

(4) When $(\Pbf,Y)=(\Tbf,x)$ is toric, $(\Tbf^\tau,x^\tau)$ is isomorphic to $(\Tbf,x_\tau)$, where $x_\tau:\Sbb\ra\Tbf_\Rbb$ is the unique homomorphism of $\Rbb$-tori such that $\mu_{x_\tau}=\mu_x\circ\tau$.

Moreover the construction of $(\Pbf^\tau,Y^\tau)$, $\phi^\tau$, $\psi^\tau$, etc. are uniquely determined by the toric case (through functoriality).

It should be point out that the constructions in \cite{milne-shih-conjugate} actually starts with $\tau\in\Aut(\Cbb/\Qbb)$ plus a toric subdatum $(\Tbf,x)\subset(\Pbf,Y)$, and it is then proved that the conjugates with $\tau$ fixed and $(\Tbf,x)$ varied are isomorphic canonically, and for simplicity we omit the complete description.

\end{definition-proposition}

Relevant constructions and proofs for the general case of mixed Shimura varieties are sketched in the appendix.

\begin{corollary}\label{finite-conjugate} Let $(\Gbf,X)$ be a pure Shimura datum and $(\Gbf',X')$ a pure subdatum with $\Cbf$ the connected center of $\Gbf'$. Fix an isomorphism of $(\Gbf,X)$ and $(\Gbf^\tau,X^\tau)$, and identify $(\Gbf'^\tau,X'^\tau)$ as a second subdatum of $(\Gbf,X)$. Write $\Cbf^\tau$ for the connected center of $\Gbf'^\tau$.


Let $S=\Gamma\bsh X^+$ be a connected pure Shimura datum associated to $(\Gbf,X;X^+)$, with its canonical model defined over a number field $F$ containing $E$, then there exists a number field $F$ containing $E$ over which $S$ admits a canonical model, such that for any $\tau\in\Aut(\Cbb/F)$ and any $\Cbf$-special subvariety $S'$ of $S$, the conjugate $\tau(S')$ is still a $\Cbf$-special subvariety of $S$.
\end{corollary}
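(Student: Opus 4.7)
The plan is to use Definition-Proposition \ref{conjugate} to translate Galois conjugation of $\Cbf$-special subvarieties into conjugation of the underlying subdata, and then to control the induced action on the $\Qbb$-subtorus $\Cbf\subset\Gbf$ by taking $F$ to contain a splitting field of $\Cbf$.

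Every $\Cbf$-special subvariety $S'\subset S$ is of the form $\wp_\Gamma(X''^+)$ for some connected subdatum $(\Gbf'',X'';X''^+)\subset(\Gbf,X;X^+)$ whose $\Gbf''$ has connected center equal to $\Cbf$. By \ref{conjugate}(1)--(3), for $\tau\in\Aut(\Cbb/E)$ the Galois conjugate $\tau(S')$ is the special subvariety defined by the subdatum $(\Gbf''^\tau,X''^\tau)$, viewed as a subdatum of $(\Gbf,X)$ via the fixed identification $(\Gbf,X)\cong(\Gbf^\tau,X^\tau)$. Its connected center is the image of $\Cbf$ under this identification, which is a $\Qbb$-subtorus of $\Gbf$ that I denote $\widetilde{\Cbf}^\tau$ so as not to collide with the notation reserved for $\Gbf'$ in the statement.

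By \ref{conjugate}(4), the entire conjugation construction is uniquely determined by its action on toric subdata: fixing any $x_0\in X$ with $x_0(\Sbb)\subset\Cbf_\Rbb$ yields a toric subdatum $(\Cbf,x_0)\hookrightarrow(\Gbf,X)$, and its conjugate $(\Cbf,x_0)^\tau$ is isomorphic to $(\Cbf,x_{0,\tau})$ where $\mu_{x_{0,\tau}}$ is the image of $\mu_{x_0}$ under the $\tau$-action on $\Xbf_\Cbf^\vee(\Cbb)$. Choose $F$ to be a number field containing $E(\Gbf,X)$, the finite abelian extension of $E$ over which $S$ admits its canonical model, and a splitting field $K$ of $\Cbf$ as a $\Qbb$-torus. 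For $\tau\in\Aut(\Cbb/F)$, the action on $\Xbf_\Cbf^\vee(\Cbb)$ factors through $\Gal(K/\Qbb)$ and is therefore trivial, so $\mu_{x_{0,\tau}}=\mu_{x_0}$ and $(\Cbf,x_0)^\tau\cong(\Cbf,x_0)$ in a way compatible with the identification of the ambient data. Hence the identification of $(\Gbf,X)$ with $(\Gbf^\tau,X^\tau)$ can be arranged to restrict to the identity on the subtorus $\Cbf\subset\Gbf$. Applying the functoriality \ref{conjugate}(3) along the chain $(\Cbf,x_0)\hookrightarrow(\Gbf'',X'')\hookrightarrow(\Gbf,X)$, the identification on $\Gbf''$ sends the connected center $\Cbf$ to itself, so $\widetilde{\Cbf}^\tau=\Cbf$ as subtori of $\Gbf$, and $\tau(S')$ is again $\Cbf$-special. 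Since $F$ depends only on $\Cbf$, $\Gbf$, $X$ and the field of definition of $S$, it works uniformly for all $\Cbf$-special subvarieties.

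The main obstacle I anticipate is the precise tracking of the identifications: the isomorphism furnished by \ref{conjugate}(1) is not fully canonical, and one must invoke the toric normalization of \ref{conjugate}(4) to force it to restrict to the identity on $\Cbf$. Once the functorial construction sketched in the appendix (parallel to Milne--Shih in the pure case) is in hand, this compatibility is essentially formal, but pinning it down cleanly requires choosing a toric reference point lying inside $\Cbf$ and being careful about the ambiguity in the isomorphism by automorphisms of $(\Gbf,X)$ that normalize $\Cbf$.
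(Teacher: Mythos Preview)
The paper does not supply a proof of this corollary; it is left as a consequence of Definition--Proposition~\ref{conjugate} and the appendix, and the only hint at the intended argument appears in the proof of Corollary~\ref{rigid-equidistribution}, where the relevant field $F$ is taken to contain the reflex field of the abelianization $(\Gbf^{\ab},x^{\ab})$, with $\Gbf^{\ab}$ isogenous to $\Cbf$. Your strategy of reducing to the toric case via the functoriality in \ref{conjugate}(3)--(4) and then enlarging $F$ to kill the Galois action on the cocharacter lattice of $\Cbf$ is exactly the right idea and is consistent with that hint.

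There is, however, a concrete gap in your execution. You write ``fixing any $x_0\in X$ with $x_0(\Sbb)\subset\Cbf_\Rbb$ yields a toric subdatum $(\Cbf,x_0)\hookrightarrow(\Gbf,X)$''. Such an $x_0$ does not exist unless the $\Cbf$-special subdatum $(\Gbf'',X'')$ is itself toric: if $x_0\in X''$ had $x_0(\Sbb)\subset\Cbf_\Rbb$, then since $\Cbf$ is central in $\Gbf''$ the whole orbit $X''=\Gbf''(\Rbb)\cdot x_0$ would collapse to the single point $x_0$, forcing $\Gbf''=\Cbf$ by minimality. So in general $(\Cbf,x_0)$ is not available as the toric reference datum.

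The fix is to replace $(\Cbf,x_0)$ by a genuine toric subdatum $(\Tbf,x_0)\subset(\Gbf'',X'')$ with $\Tbf$ a maximal $\Qbb$-torus of $\Gbf''$; such $\Tbf$ automatically contains $\Cbf$ because $\Cbf$ is central. Equivalently, and more uniformly in $(\Gbf'',X'')$, pass to the abelianization $\nu:\Gbf''\to\Gbf''^{\ab}=\Gbf''/\Gbf''^{\der}$, which is isogenous to $\Cbf$; the image $(\Gbf''^{\ab},\nu_*X'')$ is toric and its reflex field is contained in the splitting field of $\Cbf$. Taking $F$ to contain that splitting field (together with the field of definition of $S$) then guarantees, via \ref{conjugate}(4) applied to this toric quotient rather than to a nonexistent toric subdatum, that the twist fixes $\Cbf$ inside $\Gbf$ and hence that $\tau(S')$ remains $\Cbf$-special. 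Once you make this adjustment, the rest of your argument goes through, and your honest remark about tracking the non-canonical identifications remains pertinent.
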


Recall that in \cite{chen-rigid} we have proved:

\begin{theorem}[cf.\cite{chen-rigid} 2.6]\label{rigid-compactness}Let $M$ be a connected Kuga variety defined by some Kuga datum $(\Pbf,Y)=\Vbf\rtimes_\rho(\Gbf,X)$, and $\Cbf$ a fixed $\Qbb$-torus of $\Gbf$. Write $\Pscr'(M)$ for the set of canonical measures on $M(\Cbb)$ associated to rigid $\Cbf$-special subvarieties of $M$. Then $\Pscr'(M)$ is compact for the weak-* topology, and admits ''support convergence'' in the sense that if $(\mu_n)_n$ is a sequence in $\Pscr'(M)$ that converges to some $\mu$, then $\Supp\mu_n\subset\Supp\mu$ for $n$ large enough, and thus $\bigcup_{n\gg 0}\Supp\mu_n$ is dense in $\Supp\mu$ for the archimedean topology.

\end{theorem}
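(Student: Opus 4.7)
The plan is to lift the canonical measures on rigid $\Cbf$-special subvarieties to homogeneous probability measures on the lattice space $\Gamma\bsh\Pbf(\Rbb)^+$, and then invoke measure rigidity and equidistribution in the style of Ratner--Mozes--Shah, adapted to the mixed (non-reductive) setting $\Pbf=\Vbf\rtimes_\rho\Gbf$.

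The first step is to identify $M(\Cbb)=\Gamma\bsh Y^+$ with a quotient of $\Gamma\bsh\Pbf(\Rbb)^+$ by the compact stabilizer at a chosen basepoint of $X^+$. By \ref{c-special-rigid}, a rigid $\Cbf$-special subvariety $M'\subset M$ corresponds to a connected Kuga subdatum $(\Pbf',Y';Y'^+)\subset(\Pbf,Y;Y^+)$ satisfying $\Vbf^{\Gbf'^\der}=0$ and such that $\Cbf$ equals the connected center of $\pi(\Pbf')$. Its canonical measure $\mu_{M'}$ lifts to the $\Pbf'(\Rbb)^+$-invariant probability measure supported on the closed orbit $\Gamma\Pbf'(\Rbb)^+$ inside $\Gamma\bsh\Pbf(\Rbb)^+$. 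Thus $\Pscr'(M)$ embeds into the space of homogeneous probability measures on this lattice space.

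Next I would take a sequence $(\mu_n)_n$ in $\Pscr'(M)$ attached to subdata $(\Pbf_n,Y_n;Y_n^+)$ and establish tightness. Since each $\Gbf_n^\der$ acts on $\Vbf$ with no trivial subrepresentation, the unipotent directions in $\Vbf_n$ are entangled with a non-trivial semisimple action, and Dani--Margulis non-divergence applies to exclude escape of mass to infinity. A subsequential weak-$*$ limit $\mu$ exists, and by Mozes--Shah (extended to the present non-reductive setting), $\mu$ is itself the canonical homogeneous measure on a closed orbit of a $\Qbb$-subgroup $\Pbf_\infty\subset\Pbf$ arising from a connected Kuga subdatum $(\Pbf_\infty,Y_\infty;Y_\infty^+)$, and moreover $\Supp\mu_n\subset\Supp\mu$ for all $n$ sufficiently large. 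This gives the asserted support convergence, which in turn yields the archimedean density of $\bigcup_{n\gg 0}\Supp\mu_n$ in $\Supp\mu$.

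It remains to verify that $(\Pbf_\infty,Y_\infty)$ is again rigid and $\Cbf$-special, so that $\mu\in\Pscr'(M)$. For $\Cbf$-speciality one uses that the condition ``connected center of $\pi(\Pbf_n)$ equals $\Cbf$'' cuts out only finitely many $\Qbb$-conjugacy classes of admissible $\Pbf_n$, and this is stable under the algebraic limit provided by Mozes--Shah. For rigidity, the condition $\Vbf^{\Gbf_n^\der}=0$ is equivalent to a closed condition on the pair $(\Gbf_n^\der,\Vbf)$, which is preserved in the limit. The main obstacle here is precisely the non-reductive character of $\Pbf$: one must extend the Ratner--Mozes--Shah machinery to accommodate the unipotent radical $\Vbf$, and it is the rigidity hypothesis $\Vbf^{\Gbf'^\der}=0$ that couples the $\Vbf$-direction to the semi-simple $\Gbf^\der$-direction and prevents the limit subdatum from degenerating into a non-rigid (CM-type) one.
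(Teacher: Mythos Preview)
This theorem is not proved in the present paper; it is quoted verbatim from \cite{chen-rigid}~2.6, and the current paper only \emph{applies} it (see the sentence immediately following the statement and the proof of Corollary~\ref{rigid-equidistribution}). So there is no ``paper's own proof'' to compare against here.

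That said, your sketch is a faithful outline of the strategy carried out in \cite{chen-rigid}: pass to the lattice space $\Gamma\bsh\Pbf(\Rbb)^+$, identify canonical measures of special subvarieties with homogeneous probability measures on closed orbits, apply Dani--Margulis non-divergence for tightness, and invoke a Mozes--Shah type theorem to conclude that any weak-$*$ limit is again homogeneous with the support inclusion. The introduction of the present paper explicitly says the ergodic-theoretic work on lattice spaces was done in \cite{chen-rigid}, so you have correctly reconstructed the shape of that argument.

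Where your sketch is thin is exactly where the real work in \cite{chen-rigid} lies. First, the extension of Mozes--Shah to the non-reductive group $\Pbf=\Vbf\rtimes\Gbf$ is not automatic and is the main technical point; your paragraph simply asserts it. Second, your justification that the limit datum is again $\Cbf$-special and rigid is not convincing as written: the claim that $\Cbf$-speciality ``cuts out finitely many $\Qbb$-conjugacy classes'' is not the mechanism used, and rigidity of the limit does not follow from $\Vbf^{\Gbf_n^\der}=0$ being a ``closed condition'' in any naive sense. What one actually uses is the support inclusion $\Supp\mu_n\subset\Supp\mu$ to get (up to conjugacy) $\Pbf_n\subset\Pbf_\infty$ for $n\gg0$, hence $\Gbf_n^\der\subset\Gbf_\infty^\der$ and thus $\Vbf^{\Gbf_\infty^\der}\subset\Vbf^{\Gbf_n^\der}=0$; and a separate argument, exploiting rigidity, to pin down the connected center of $\pi(\Pbf_\infty)$ as exactly $\Cbf$ rather than a smaller torus. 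These are the steps you would need to fill in if you were actually reproving \cite{chen-rigid}~2.6.
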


Applying the results above to rigid Kuga varieties we get:

\begin{corollary}\label{rigid-equidistribution} Let $M$ be a connected Kuga variety defined by a connected Kuga datum  $(\Pbf,Y;Y^+)=\Vbf\rtimes_\rho(\Gbf,X;X^+)$ which is $\rho$-rigid, with $F\subset \Cbb$ a field of definition over which $M$ admits a canonical model. Write $\Cbf$ for the connected center of $\Gbf$, and let $(M_n)_n$ be a sequnence of rigid $\Cbf$-special subvarieties in $M$, $\Cbf$-strict in the sense of \ref{statement-main-theorem}. Then the $\Gal(\Qac/F)$-orbits of the $M_n$'s are equidistributed in $M$ \wrt the canonical measure on $M(\Cbb)$, in the sense of \ref{statement-main-theorem}.
\end{corollary}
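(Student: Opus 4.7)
The plan is to use Theorem~\ref{rigid-compactness} together with a passage from single-conjugate sequences to averages. First I would enlarge the field of definition $F$ via the Kuga analogue of Corollary~\ref{finite-conjugate} so that every $\tau\in\Gal(\Qac/F)$ sends rigid $\Cbf$-special subvarieties of $M$ to rigid $\Cbf$-special subvarieties of $M$; preservation of rigidity follows from the functoriality in Definition-Proposition~\ref{conjugate}(3), as the condition $\Vbf^{\Gbf^\der}=0$ on a defining subdatum is intrinsic and hence invariant under the isomorphism $(\Pbf,Y)\isom(\Pbf^\tau,Y^\tau)$. After this reduction, each orbit $\Oscr_n$ consists of rigid $\Cbf$-special subvarieties, and each $\mu_Z$ for $Z\in\Oscr_n$ lies in $\Pscr'(M)$.

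The core reduction is to show $\mu_{Z_n}\to\mu_M$ in the weak-$*$ topology for \emph{every} choice of representatives $Z_n\in\Oscr_n$. The first substep is to verify that such a choice sequence is automatically $\Cbf$-strict: writing $Z_n=\tau_n(M_n)$, if $Z_n\subset M'$ for infinitely many $n$ and some proper $\Cbf$-special $M'\subsetneq M$, then $M_n\subset\tau_n^{\inv}(M')$. Since the $\Gal(\Qac/F)$-orbit of $M'$ is a finite set $\{M'_1,\dots,M'_r\}$ of proper $\Cbf$-special subvarieties, the pigeonhole principle yields a fixed $M'_i$ containing $M_n$ for infinitely many $n$, contradicting the $\Cbf$-strictness of $(M_n)_n$.

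Granting strictness of $(Z_n)_n$, I apply Theorem~\ref{rigid-compactness}: any subsequence of $(\mu_{Z_n})_n$ has, by weak-$*$ compactness of $\Pscr'(M)$, a further subsequence $(\mu_{Z_{n_k}})_k$ converging to some $\nu\in\Pscr'(M)$ whose support $N$ is a rigid $\Cbf$-special subvariety. The support-convergence assertion gives $Z_{n_k}\subset N$ for $k$ large; strictness of $(Z_n)_n$ then forces $N=M$, so $\nu=\mu_M$. Since every subsequential weak-$*$ limit equals $\mu_M$, the full sequence $\mu_{Z_n}$ converges to $\mu_M$.

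Finally, a diagonal argument upgrades this to the averages. If $\mu_n\not\to\mu_M$, one could extract a subsequence along which $\int f\,d\mu_{n_k}-\int f\,d\mu_M\geq\epsilon$ for some continuous test function $f$ and $\epsilon>0$; averaging forces at least one $Z_{n_k}\in\Oscr_{n_k}$ to satisfy $\int f\,d\mu_{Z_{n_k}}\geq\int f\,d\mu_M+\epsilon$, and completing this to a global choice sequence $(Z_n)_n$ contradicts $\mu_{Z_n}\to\mu_M$ established above. The main obstacle here is not the limit extraction, which is handed to us by Theorem~\ref{rigid-compactness}, but the bookkeeping for the first step: tracking that both $\Cbf$-specialness and rigidity persist under Galois conjugation requires an explicit identification of $(\Pbf^\tau,Y^\tau)$ with $(\Pbf,Y)$ on the level of subdata, resting on the functoriality in Definition-Proposition~\ref{conjugate} together with the pure-case result of Corollary~\ref{finite-conjugate}.
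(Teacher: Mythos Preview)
Your proposal is correct and follows essentially the same strategy as the paper's proof: both hinge on Theorem~\ref{rigid-compactness}, the observation that Galois conjugates of $\Cbf$-special subvarieties remain $\Cbf$-special (which the paper invokes via the functoriality corollary), and the fact that any derived sequence of conjugates is again $\Cbf$-strict (your pigeonhole argument makes explicit what the paper asserts in one line). The only organizational difference is in the final passage to averages: the paper concatenates all the Galois orbits into a single sequence $(N_m)_m$, applies the convergence argument to it, and then reads off the convergence of the block averages $\nu_n$, whereas you work with arbitrary choice sequences $Z_n\in\Oscr_n$ and use a diagonal/contradiction argument---these are interchangeable packagings of the same step.
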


\begin{proof} 
 
Write $\mu_n$ for the canonical measure associated to $M_n$. We first show that $\mu_n$ converges to $\mu$ the canonical measure associated to $M$ for the weak-* topology. Since $(\mu_n)_n$ does admits convergent subsequences by the weak-* compactness of $\Pscr'(M)$, it suffices to show that all convergent subsequences $(\mu_n')$ of $(\mu_n)$ have the same limit: if there are two subsequences $(\mu_n')_n$ and $(\mu''_n)_n$ with distinct limits $\mu'$ and $\mu''$, then we may assume that $\mu'\neq\mu$, hence $\Supp\mu'\subset\Supp\mu=M$ is a proper $\Cbf$-special subvariety. The convergence implies that $\Supp\mu'_n\subset\Supp\mu'$ for $n$ large, which contradicts the assumption of $\Cbf$-strictness. Hence $(\mu_n)_n$ converges to $\mu$.

We then pass to the convergence of average over the Galois orbits. Since $F$ contains the reflex field of $(\Gbf^\ab,x^\ab)$, with $\Gbf^\ab$ isogeneous to $\Cbf$, the corollary \ref{functoriality-corollary}   implies that each $\Gal(\Qac/F)$-conjugates of $M_n$ is again $\Cbf$-special. Write $\Oscr_n$ for the $\Gal(\Qac/F)$-conjugates of $M_n$, of cardinality $d_n$: $\Oscr_n=\{M_n=M_{n,1},\cdots,M_{n,d_n}\}$. Then put a second sequence of $\Cbf$-special subvarieties $(N_m)_m$ with $N_m=M_{n,m_n}$ for $$d_1+\cdots+d_{n-1}< m\leq d_1+\cdots+d_n, \ m_n=m-\sum_{i<n}d_i.$$ $(N_m)_m$ is again a sequence of rigid $\Cbf$-special subvarieties, $\Cbf$-strict as it is already true with $(M_n)_n$. Then the associated sequence of $\Cbf$-special measures $(\mu_{N_m})_m$ converges to $\mu$, hence the averaged sequence $$\nu_n:=\dfrac{1}{d_n}\sum_{\sum_{i<n}d_i<m\leq\sum_{i\leq n}}\mu_{N_m}$$ also converges to $\mu$, from which follows the equidistribution of $\Oscr_n$. \end{proof}

\begin{remark}
If $M$ is pure in the corollary above, then the  number of conjugates of $\Cbf$-special subvarieties is uniformly bounded by a constant independent of the choice of the subdata, as a consequence of \ref{rockmore-tan-bound}  below.

However when $M$ is a rigid Kuga variety, the number of conjugates of $\Cbf$-special subvarieties is not necessarily bounded, see the constructions in \ref{strongly-rigid-bound}. 

\end{remark}

Our main theorem is restricted to the case where $M$ admits a triple product decomposition $M=M^\cm\times M^\rig\times S'$ such that $M^\rig$ is strongly rigid in the following sense:

\begin{definition}\label{strong-rigidity} A rigid Kuga datum $(\Pbf,Y)=\Vbf\rtimes_\rho(\Gbf,X)$ (with $\Vbf\neq0$) is said to be strongly rigid, if the connected center of $\Gbf$ acts on $\Vbf$ through a split $\Qbb$-torus.

The notion of strongly rigid subdata, strongly rigid special subvarieties, etc. are defined in the evident way.
\end{definition}

\begin{lemma}{transitivity}
Let $(\Pbf,Y)=\Vbf\rtimes_\rho(\Gbf,X)$ be a strongly rigid Kuga datum, with $\Cbf$ the connected center of $\Gbf$. Then any $\Cbf$-special subdatum of $(\Pbf,Y)$ is strongly rigid.
\end{lemma}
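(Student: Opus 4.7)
The plan is to verify that the two defining conditions of strong rigidity — rigidity ($\Vbf^{\Gbf'^\der}=0$) and the split $\Qbb$-torus action of the center on $\Vbf$ — both descend to any $\Cbf$-special subdatum $(\Pbf',Y')=\Vbf'\rtimes(v\Gbf'v^\inv,v\rtimes X')$ of $(\Pbf,Y)$. The split-torus condition is essentially automatic: by the $\Cbf$-special hypothesis, $\Cbf$ is the connected center of $\Gbf'$ itself, and the ambient $\Cbf$-action on $\Vbf\supset\Vbf'$ is already through a split $\Qbb$-torus; restriction preserves this property. So the whole difficulty is to establish rigidity.

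To do this, I would first decompose $\Vbf=\bigoplus_j\Vbf_j$ into $\Gbf$-irreducible $\Qbb$-subrepresentations. Since $\Cbf$ is central in $\Gbf$, it acts on each $\Vbf_j$ through a single character $\chi_j$, and the split-torus hypothesis on $\Cbf$ implies that every such $\chi_j$ is a $\Qbb$-rational character of $\Cbf$. Set $W_j:=\Vbf_j^{\Gbf'^\der}$: a $\Qbb$-subspace of $\Vbf_j$, stable under $\Gbf'=\Cbf\cdot\Gbf'^\der$ because $\Gbf'^\der$ is normal in $\Gbf'$, and on which the $\Gbf'$-action factors through $\chi_j$. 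The aim is to show $W_j=0$ for every $j$, which sums up to $\Vbf^{\Gbf'^\der}=\bigoplus_jW_j=0$.

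The key step, and the main obstacle, is a Hodge-theoretic contradiction. Pick any $x'\in X'\subset X$; then $\mu_{x'}$ lies in $\Gbf'_\Cbb=\Cbf_\Cbb\cdot(\Gbf'^\der)_\Cbb$, and writing $\mu_{x'}=\mu_C\mu_D$ up to finite kernel with $\mu_C\in\Cbf_\Cbb$ and $\mu_D\in(\Gbf'^\der)_\Cbb$, its action on $W_{j,\Cbb}$ reduces to that of $\mu_C$, hence to the single integer weight $\chi_j(\mu_C)$. On the other hand, $W_j$ is $\Qbb$-rational and $\mu_{x'}$-stable, so it is a sub-Hodge structure of $(\Vbf,x')$ whose type lies in $\{(-1,0),(0,-1)\}$; since these two types are complex conjugate, any nonzero such $\Qbb$-rational sub-Hodge structure must carry both of them, and $\mu_{x'}$ would then acquire two distinct weights on $W_{j,\Cbb}$. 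This contradicts the single-weight computation and forces $W_j=0$. Combined with the inherited split-center action, this yields strong rigidity of $(\Pbf',Y')$.
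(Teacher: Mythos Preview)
Your argument is correct, and since the paper's own proof simply reduces to the faithful case and then cites \cite{chen-rigid} 2.2 as a black box, your write-up is in fact a self-contained unpacking of what that citation presumably provides.

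One small point of phrasing deserves tightening. The decomposition $\mu_{x'}=\mu_C\mu_D$ into cocharacters landing in $\Cbf_\Cbb$ and $(\Gbf'^\der)_\Cbb$ need not exist, because the isogeny $\Cbf\times\Gbf'^\der\to\Gbf'$ does not in general split on cocharacter lattices. This does not affect your argument: since $\Gbf'^\der$ acts trivially on $W_j$ and $\Cbf$ acts by the single $\Qbb$-rational character $\chi_j$ (this is exactly where the split-center hypothesis enters, to exclude a nontrivial Galois orbit of characters on the $\Gbf$-irreducible $\Vbf_j$), the entire group $\Gbf'=\Cbf\cdot\Gbf'^\der$ acts on $W_j$ through a single $\Qbb$-rational character $\tilde\chi_j:\Gbf'\to\Gbb_\mrm$. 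Hence $\mu_{x'}$ acts on $W_{j,\Cbb}$ via the single integer weight $\tilde\chi_j\circ\mu_{x'}$, with no need to decompose $\mu_{x'}$. The contradiction with the forced presence of both Hodge types $(-1,0)$ and $(0,-1)$ on a nonzero $\Qbb$-rational sub-Hodge structure then goes through as you wrote. The split-center condition for the subdatum is, as you say, immediate: the connected center of $\Gbf'$ is $\Cbf$ by the $\Cbf$-special hypothesis, and $\Cbf$ already acts on $\Vbf$ (hence on $\Vbf'$) through a split $\Qbb$-torus.
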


\begin{proof}
We may assume that $\rho:\Gbf\ra\GLbf_\Qbb(\Vbf)$ is faithful. It then suffices to apply \cite{chen-rigid} 2.2.
\end{proof}

\begin{theorem}\label{main-theorem}
Let $(\Pbf,Y)=\Vbf\rtimes(\Gbf,X)$ be a Kuga datum decomposed as a direct product $(\Pbf^\cm,Y^\cm)\times(\Pbf^\rig,Y^\rig)\times(\Gbf',X')$ with $(\Pbf^\rig,Y^\rig)$ strongly rigid, and $M=\Gamma\bsh Y^+$ a connected Kuga variety associated to $(\Pbf,Y)$ which also decomposes into $M^\cm\times M^\rig\times S'$, as in \ref{triple-product-decomposition}. Let $F$ be a number field over which $M$ admits a canonical model, and $\Cbf$ the connected center of $\Gbf$. Assume that $(M_n)_n$ is a sequence of $\Cbf$-special subvarieties in $M$, which is $\Cbf$-strict in the sense that for any $\Cbf$-sepcial subvariety $M'\subsetneq M$ we have $M_n\nsubseteq M'$ for $n$ large enough. Write $\Oscr_n$ for the $\Gal(\Qac/F)$-orbit of $M_n$ in $M$, and $\mu_n$ the average of canonical measures associated to members in $\Oscr_n$. Then $\mu_n$ converges to $\mu$ the canonical measure on $M(\Cbb)$ for the weak-* topology.
\end{theorem}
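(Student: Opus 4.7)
The plan is to reduce the equidistribution on $M$ to equidistribution on the CM factor $M^\cm$ and on the rigid--pure factor $M^\rig \times S'$ separately, then apply Theorem~\ref{ullmo-icm} of Szpiro--Ullmo--Zhang to the first and Corollary~\ref{rigid-equidistribution} to the second. Invoking the latter is legitimate because $M^\rig \times S'$ is rigid as a Kuga variety: its unipotent radical $\Vbf^\rig$ satisfies $\Vbf^{\rig,(\Gbf^\rig \times \Gbf')^\der} = \Vbf^{\rig,\Gbf^{\rig,\der}} = 0$, since $\Gbf'$ acts trivially on $\Vbf^\rig$; and the connected center of $\Gbf^\rig \times \Gbf'$ is $\Cbf^\rig \times \Cbf'$, so Corollary~\ref{rigid-equidistribution} gives the desired conclusion for sequences that are $(\Cbf^\rig \times \Cbf')$-strict.

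First I would use Lemma~\ref{subdatum-compatibility} to write each $M_n$ as a product $M_n^\cm \times M_n^\rig \times M_n'$ of special subvarieties of the respective factors, with $M_n^\cm$ a torsion subvariety in $M^\cm$, $M_n^\rig$ a $\Cbf^\rig$-special subvariety in $M^\rig$, and $M_n'$ a $\Cbf'$-special subvariety in $S'$. Passing to subsequences if necessary, I would then deduce from the $\Cbf$-strictness of $(M_n)$ in $M$ that $(M_n^\cm)$ is strict in $M^\cm$ and $(M_n^\rig \times M_n')$ is $(\Cbf^\rig \times \Cbf')$-strict in $M^\rig \times S'$: any failure of strictness on one factor, combined with saturation in the other factor, would produce a proper $\Cbf$-special subvariety of $M$ containing infinitely many $M_n$, contradicting the hypothesis.

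The central technical step, and the one I expect to present the main obstacle, is to establish near-multiplicativity of Galois orbit sizes: there is a constant $C$, independent of $n$, such that
\[
 C^{-1}\cdot \#\Oscr_n \;\le\; \#\Oscr_n^\cm \cdot \#\Oscr_n^{\rig,'} \;\le\; C\cdot \#\Oscr_n,
\]
where $\Oscr_n^\cm$ and $\Oscr_n^{\rig,'}$ denote the $\Gal(\Qac/F)$-orbits of $M_n^\cm$ and of $M_n^\rig \times M_n'$ in the respective factor varieties. The reciprocity formula of Definition-Proposition~\ref{canonical-model}, combined with the hypothesis that $\Cbf^\rig$ splits over $\Qbb$ (strong rigidity), shows that the Galois action on $\Cbf$-special subvarieties of either factor factors through controllable abelian quotients, and for the CM factor the fields of definition are cyclotomic over $F$ once $F$ contains the CM reflex. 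Ribet's theorem that $M^\cm(F\Qbb^\ab)_\tor$ is finite is then the key input used to rule out hidden abelian correlations between the cyclotomic characters occurring on the CM side and those occurring through $\Cbf^\rig$ on the rigid side; it forces the intersection of the two fields of definition to be a uniformly bounded extension of $F$, giving the multiplicativity. Granting this, $\mu_n$ is approximated in the weak-$*$ sense by the product of the two factor-averaged measures, and by continuity of the product of probability measures on compact spaces the convergences furnished by Theorem~\ref{ullmo-icm} and Corollary~\ref{rigid-equidistribution} combine to give convergence of $\mu_n$ to the canonical measure on $M(\Cbb)$.
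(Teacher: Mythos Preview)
Your overall strategy coincides with the paper's: split $M_n = M_n^\cm \times M_n^\ncm$ with $M^\ncm = M^\rig \times S'$, apply Szpiro--Ullmo--Zhang on the CM factor and Corollary~\ref{rigid-equidistribution} on the non-CM factor, and invoke Ribet to decouple the two. The gap lies in your formulation of the decoupling step.

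Your claim that the fields of definition $L_n^\cm$ and $L_n^\ncm$ have uniformly bounded intersection over $F$, and hence that the orbit sizes are nearly multiplicative, is not correct in general. Both fields can contain arbitrarily large cyclotomic subfields. On the CM side, $F(a_n)$ for a torsion point $a_n$ of order $N$ typically contains $\Qbb(\mu_N)$ (via the Weil pairing, or directly from CM theory). On the rigid side, Lemma~\ref{strongly-rigid-bound} shows only $L_n^\ncm \subset F_n\Qbb^\ab$ with $[F_n:F]$ bounded; but $L_n^\ncm$ itself can be an arbitrarily large extension inside $F_n\Qbb^\ab$ when the translation parameter $v_n$ has large denominator---this is precisely the content of the remark following Corollary~\ref{rigid-equidistribution}. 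Ribet's theorem gives you that $a_n \notin M^\cm(F_n\Qbb^\ab)$ once $N$ is large, i.e.\ $L_n^\cm \not\subset F_n\Qbb^\ab$; it does \emph{not} bound $L_n^\cm \cap L_n^\ncm$. For correlated choices of $a_n$ and $v_n$ the intersection contains $F\cdot\Qbb(\mu_N)$ and near-multiplicativity fails by a factor comparable to $\varphi(N)$. Consequently $\mu_n$ is not, in general, close to the product $\mu_n^\cm \otimes \mu_n^{\ncm}$.

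The paper circumvents this by \emph{filtering} the Galois action rather than splitting it as a product. First one lets $\Gal(\Qac/F_n\Qbb^\ab)$ act: this fixes $M_n^\ncm$ entirely (Corollary~\ref{bound-galois}), and on the CM factor its orbit already equidistributes to the Haar measure of $M^\cm(\Cbb)$, by a strengthening of Szpiro--Ullmo--Zhang over the varying base fields $F_n\Qbb^\ab$ (Corollary~\ref{uniform-finiteness}(2)$'$---this is where Ribet is actually used). Then one lets the quotient $\Gal(F_n\Qbb^\ab/F)$ act on the result: on $M_n^\ncm$ this produces the full orbit and one invokes Corollary~\ref{rigid-equidistribution}; on the CM factor it can only translate by the uniformly finite set $M^\cm(L)_\tor$ supplied by Corollary~\ref{uniform-finiteness}(1). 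The total orbit measure is thus a convolution of a measure converging to Haar on $M^\cm$ with a measure supported on boundedly many CM-translates of the equidistributing rigid orbit, and this converges to the canonical measure on $M(\Cbb)$.
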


\begin{corollary}\label{main-corollary}
Let $M$ be a connected Kuga variety defined by the connected Kuga datum $(\Pbf,Y;Y^+)=\Vbf\rtimes(\Gbf,X;X^+)$, and let $\Cbf$ be a $\Qbb$-torus in $\Gbf$. Then for any sequence $(M_n)_n$ of $\Cbf$-special subvarieties in $M$, the Zariski closure of $\bigcup_nM_n$ is a finite union of $\Cbf$-special subvarieties.
\end{corollary}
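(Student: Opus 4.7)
The plan is to argue by contradiction, using the main theorem as a black box to derive a conflict between the weak-$*$ equidistribution of Galois orbits and the containment of the $M_n$'s in a proper Zariski closed subvariety. I would first let $Z$ denote the Zariski closure of $\bigcup_n M_n$ in $M$ and decompose $Z$ into its geometrically irreducible components; since each $M_n$ is geometrically irreducible it lies in a single component, so by pigeonhole it suffices to show that each irreducible component is $\Cbf$-special. After replacing $(M_n)$ by a subsequence all of whose members lie in a fixed component, I may assume $Z$ is irreducible and $Z = \overline{\bigcup_n M_n}^{\mathrm{Zar}}$. Invoking the reduction of \cite{chen-rigid} 2.8, I may further assume that $\Cbf$ equals the connected center of $\Gbf$, so that $M$ itself is $\Cbf$-special.

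Suppose for contradiction that $Z$ is not $\Cbf$-special, and let $M_0$ be a minimal $\Cbf$-special subvariety of $M$ containing $Z$; then $Z \subsetneq M_0$. The next step is to extract from $(M_n)$ a subsequence that is $\Cbf$-strict in $M_0$. The set of $\Cbf$-special subvarieties of $M_0$ is countable, so I enumerate the proper ones as $M_1', M_2', \dots$; for each $k$, the subfamily $\{M_n : M_n \subset M_k'\}$ has Zariski closure inside $Z \cap M_k' \subsetneq Z$ by minimality of $M_0$. A standard diagonal argument then produces a subsequence $(M_{n_k})$ with $M_{n_k} \nsubseteq M_i'$ whenever $i \leq k$, and this subsequence is $\Cbf$-strict in $M_0$.

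By \ref{subdatum-compatibility} together with the transitivity of strong rigidity stated just above the main theorem, the $\Cbf$-special subvariety $M_0$ inherits a triple-product decomposition whose rigid factor is strongly rigid, so the main theorem is applicable to $M_0$. I now enlarge $F$ to a number field $F'$ over which $M_0$, $Z$, and $M_0$'s canonical model are all defined; by Corollary \ref{finite-conjugate} the $\Gal(\Qac/F')$-orbit of each $M_{n_k}$ inside $M_0$ consists of $\Cbf$-special subvarieties of $M_0$, and the main theorem applied to $M_0$ and the sequence $(M_{n_k})$ yields that the Galois-orbit averages $\mu_k'$ converge weak-$*$ to the canonical measure $\mu_{M_0}$ on $M_0(\Cbb)$.

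On the other hand, since $Z$ is $F'$-defined, every Galois conjugate $\sigma(M_{n_k})$ lies in $\sigma(Z) = Z$ for $\sigma \in \Gal(\Qac/F')$, whence $\Supp \mu_k' \subset Z$ for all $k$ and therefore $\Supp \mu_{M_0} \subset Z \subsetneq M_0$, contradicting the fact that the canonical measure on $M_0$ has full support. This forces $Z$ to be $\Cbf$-special, completing the proof. The main obstacle in the strategy is producing the diagonal subsequence that is simultaneously $\Cbf$-strict in $M_0$ and whose Galois conjugates remain inside $M_0$; this is handled by enlarging $F$ to a sufficiently large number field $F'$ and by transferring strong rigidity from the global decomposition of $M$ to $M_0$ via \ref{subdatum-compatibility} and the transitivity lemma.
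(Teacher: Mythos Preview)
Your argument is correct and follows the same overall strategy as the paper: reduce to an irreducible $Z$, replace the ambient variety by a minimal $\Cbf$-special $M_0$ containing $Z$, extract a $\Cbf$-strict subsequence, and invoke the main theorem for equidistribution on $M_0$. You are in fact more careful than the paper on one point: you explicitly check, via \ref{subdatum-compatibility} and the transitivity lemma, that $M_0$ again satisfies the strongly-rigid hypothesis of \ref{main-theorem}, whereas the paper applies the main theorem to $M_0$ without comment.

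The only genuine difference is the endgame. You first enlarge the base field so that $Z$ is $F'$-rational, and then conclude directly that every Galois conjugate of $M_{n_k}$ lies in $Z$, forcing $\Supp\mu_{M_0}\subset Z$, a contradiction. The paper instead argues that $Z$, being the Zariski closure of a family of $\Qac$-defined subvarieties, is itself defined over some number field and hence has only finitely many $\Gal(\Qac/F)$-conjugates; since equidistribution makes the union of conjugates of $Z$ dense in the irreducible $M_0$, one of them (hence all, hence $Z$) equals $M_0$. The two conclusions are equivalent; yours is a little more direct once one knows that $Z$ descends to a number field, a step you asserted but did not justify (the paper supplies exactly this justification). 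Either way the logic is sound.
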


\begin{proof}
We prove the following equivalent statement: if $Z\subset M$ is a closed subvariety, geometrically irreducible, which equals the Zarski closure of a sequence of $\Cbf$-sepcial subvariety, then $Z$ is a $\Cbf$-special subvariety.

By the same reductions as in \cite{chen-rigid}, one may assume that $M$ is $\Cbf$-sepcial itself. And up to passing to a minimal $\Cbf$-special subvariety containing all the $M_n$'s, one may also assume that $(M_n)_n$ is $\Cbf$-special in $M$; up to passing to a subsequence, one may further assume that $(M_n)_n$ is $\Cbf$-strict.

Take $F$ a field of definition for $M$, over which $M$ admits a canonical model, and splits $\Cbf$. Then by \ref{finite-conjugate}, each member in the $\Gal(\Qac/F)$-conjugates of $M_n$ is $\Cbf$-special, and \ref{main-theorem} implies that the measure associated to $\Oscr_n$ converges to the canonical measure of $M(\Cbf)$ for the weak-* topology, which implies the density of $\bigcup_nM_n$ in $M$. Therefore the union of $\Gal(\Qac/F)$-conjugates of $Z$ is dense in $M$. $Z$ contains a Zariski dense subset of subvarieties defined over $\Qac$, $Z$ is defined over $\Qac$ itself, hence descends to some number field, with finitely many $\Gal(\Qac/F)$-conjugates. $M$ is geometrically irreducible, one thus concludes that all the $\Gal(\Qac/F)$-conjguates of $Z$ coincide and that $Z=M$.
\end{proof}

\section{$\Cbf$-special subvarieties and proof of the equidistribution}

The general setting of this section goes as follows:
\begin{assumption}\label{assumption-equidistribution}

We keep the notations as in \ref{main-theorem}: $(\Pbf,Y)$ is a Kuga datum decomposed into a triple product $(\Pbf^\cm,Y^\cm)\times(\Pbf^\rig,Y^\rig)\times(\Gbf'X')$, where $(\Pbf^\cm,Y^\cm)=\Ubf\rtimes(\Tbf,x)$ with $(\Tbf,x)$ toric, $(\Pbf^\rig,Y^\rig)=\Wbf\rtimes_\rho(\Gbf^\rig,X^\rig)$ a strongly rigid Kuga datum with $\Gbf^\rig$ acts on $\Wbf$ faithfully, and $(\Gbf',X')$ a pure Shimura datum. $M$ is a connected Kuga variety associated to $(\Pbf,Y;Y^+)$, which is decomposed into a triple product $M^\cm\times M^\rig\times S'$, with $M^\cm=\Gamma^\cm\bsh Y^\cm$, $M^\rig=\Gamma^\rig\bsh Y^{\rig,+}$, $S'=\Gamma'\bsh X'^+$ connected Kuga varieties associated to each factor in the triple product.

Write $\Cbf=\Tbf\times\Cbf^\rig\times\Cbf'$ for the connected center of $\Gbf$, with $\Cbf^\rig$ the connected center of $\Gbf^\rig$ and $\Cbf'$ that of $\Gbf'$. $(M_n)_n$ is a sequence of $\Cbf$-special subvarieties of $M$, $\Cbf$-strict in the sense of \ref{main-theorem}. By \ref{} 2.5, $M_n$ is given by a $\Cbf$-special subdatum of the form $(\Pbf_n^\cm,Y^\cm_n)\times(\Pbf^\rig_n,Y^\rig_n)\times(\Gbf'_n,X'_n)$, with $(\Pbf^\cm_n,Y^\cm_n)\subset(\Pbf^\cm,Y^\cm)$ $\Tbf$-special, $(\Pbf^\rig_n,Y^\rig_n)\subset(\Pbf^\rig,Y^\rig)$ $\Cbf^\rig$-special, and $(\Gbf'_n,X'_n)\subset(\Gbf',X')$ $\Cbf'$-special. Hence $M_n=M^\cm_n\times M^\rig_n\times S'_n$ with $M^\cm_n$ resp. $M^\rig_n$ resp. $S'_n$ special subvarieties in $M^\cm$ resp. in $M^\rig$ resp. in $S'$ given by the respective factors of $(\Pbf_n,Y_n)$.

The $\Cbf$-strictness of $(M_n)_n$ in $M$ also implies that

(i) $(M^\cm_n)_n$ is $\Tbf$-strict in $M^\cm$;

(ii) $(M^\rig_n)_n$ is $\Cbf^\rig$-special in $M^\rig$;

(iii) $(S'_n)_n$ is $\Cbf'$-special in $S'$.

\textrm{In fact if one of the factor sequence, say, $(S'_n)_n$ fails to be $\Cbf'$-strict, then one finds a $\Cbf'$-special subvariety $S''\subsetneq S'$ that contains infinitely many $S'_n$'s. Then $M'=M^\cm\times M^\rig\times S''\subsetneq M$ is $\Cbf$-special and contains infinitely many $M_n$'s, which contradicts the $\Cbf$-strictness.}

In particular, if we take $F^\cm$ resp. $F^\rig$ resp. $F'$ a field of definition (in the sense of canonical models) for $M^\cm$ resp. for $M^\rig$ resp. for $S'$, and $\Oscr^\cm_n$ resp. $\Oscr^\rig_n$ resp. $\Oscr'_n$ the orbit of $M^\cm_n$ under $\Gal(\Qac/F^\cm)$ resp. of $M^\rig_n$ under $\Gal(\Qac/F^\rig)$ resp. of $S'_n$ under $\Gal(\Qac/F')$, and $\mu^\cm_n$ resp. $\mu^\rig_n$ resp. $\mu'_n$ the average of canonical measures over $\Oscr^\cm_n$ resp. over $\Oscr^\rig_n$ resp. over $\Oscr'_n$, then we have the weak-* limits $\lim_n\mu^\cm_n=\mu^\cm$, $\lim_n\mu^\rig_n=\mu^\rig$, and $\lim_n\mu'_n=\mu'$, with $\mu^\cm$ resp. $\mu^\rig$ resp. $\mu'$ the canonical measure for $M^\cm(\Cbb)$ resp. for $M^\rig(\Cbb)$ resp. for $S'(\Cbb)$.

\end{assumption}

Our approach is to show that for $F$ a suitable number field of definition of $M$, the $\Gal(\Qac/F)$-orbit $\Oscr_n$ of $M_n$ is essentially of the same size as $\Oscr^\cm_n\times\Oscr^\rig_n\times\Oscr'_n$, hence the sequence of averaged canonical measures $(\mu_n)_n$ associated to $M_n$ converges to the same limit as $(\mu^\cm_n\otimes\mu^\rig_n\otimes\mu'_n)_n$, which is exactly the canonical measure on $M$, i.e. $\mu^\cm\otimes\mu^\rig\otimes\mu'$. Here for two measure spaces $(A,\mu_A)$ and $(B,\mu_B)$, we put $\mu_A\otimes\mu_B$ as the product measure on $A\times B$, which sends $A'\times B'$ to $\mu_A(A')\mu_B(B')$, $A'\subset A$ measurable of finite $\mu_A$-volume resp. $B'\subset B$ measurable of finite $\mu_B$-volume.
\bigskip

We start with the pure factor $S'$, for which we first introduce a uniform bound of extensions between reflex fields:

\begin{lemma}\label{rockmore-tan-bound}   Let $(\Gbf,X)$ be a pure Shimura datum with reflex field $E$. Write $r$ for the rank of $\Gbf$, namely the common dimension of maximal $\Qbb$-tori in $\Gbf$.

(1) Then $[E:\Qbb]$ is majorated by  some function $d(r)$ in $r$, independent of the choices of $\Gbf$ and $X$.

(2) When $(\Gbf',X')$ runs through subdata of $(\Gbf,X)$, the degree $[E(\Gbf',X'):E]$ is majorated by some constant $c$, independent of the choice of $(\Gbf',X')$.

\end{lemma}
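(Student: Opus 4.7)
The plan is to identify the reflex field $E$ with a subfield of the splitting field of a suitably chosen maximal $\Qbb$-torus of $\Gbf$, and then invoke the classical Minkowski bound on finite subgroups of $\GL_r(\Zbb)$. Both statements then follow quickly.

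For (1), I pick a special point $x \in X$ with $x(\Sbb) \subset \Hbf_\Rbb$ for some maximal $\Qbb$-torus $\Hbf$ of $\Gbf$, as granted by \ref{reflex-field}(c); the toric case is already contained in \ref{reflex-field}(b). Then $\mu_x$ factors through $\Hbf_\Cbb$, and $E$ is the fixed field of the stabilizer in $\Aut(\Cbb/\Qbb)$ of the $W(\Gbf_\Cbb, \Hbf_\Cbb)$-orbit of $\mu_x$. The action of $\Aut(\Cbb/\Qbb)$ on $\Xbf_\Hbf^\vee(\Cbb)$ factors through the finite quotient $\Gal(L/\Qbb)$, where $L$ is the splitting field of $\Hbf$ over $\Qbb$; since $\Nbf_\Gbf(\Hbf)/\Hbf$ is defined over $\Qbb$, the Galois action is compatible with the $W$-quotient and so $E \subset L$.

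By construction of $L$, the group $\Gal(L/\Qbb)$ embeds faithfully into $\Aut(\Xbf_\Hbf^\vee) \cong \GL_r(\Zbb)$. By Minkowski's theorem, namely that the mod-$3$ reduction $\GL_r(\Zbb) \to \GL_r(\Zbb/3\Zbb)$ is injective on finite subgroups, every finite subgroup of $\GL_r(\Zbb)$ has order at most $d(r) := |\GL_r(\Zbb/3\Zbb)|$. This yields $[E:\Qbb] \leq [L:\Qbb] \leq d(r)$, proving (1).

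For (2), any subdatum $(\Gbf', X')$ has $\Gbf'$ of rank $r' \leq r$, since every maximal $\Qbb$-torus of $\Gbf'$ is contained in some maximal $\Qbb$-torus of $\Gbf$. Applying (1) to $(\Gbf', X')$ gives $[E(\Gbf', X'):\Qbb] \leq d(r)$. From \ref{reflex-field}(a) applied to the inclusion $(\Gbf', X') \hookrightarrow (\Gbf, X)$ we have $E \subset E(\Gbf', X')$, and therefore $[E(\Gbf', X'):E] \leq d(r) =: c$, independent of the subdatum. No substantial obstacle is expected; the one point meriting care is the compatibility of the $\Aut(\Cbb/\Qbb)$-action with the $W$-quotient, which follows from the $\Qbb$-rationality of $\Nbf_\Gbf(\Hbf)/\Hbf$.
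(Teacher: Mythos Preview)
Your proof is correct and follows essentially the same route as the paper: identify $E$ as a subfield of the splitting field of a maximal $\Qbb$-torus and bound the latter via the order of a finite subgroup of $\GL_r(\Zbb)$; then for (2) use that subdata have rank at most $r$. The only cosmetic differences are that the paper cites the Rockmore--Tan bound rather than Minkowski's mod-$3$ argument, and that you work directly with the splitting field of the maximal torus $\Hbf$ (which is slightly cleaner than the paper's passage through the smaller torus $\Tbf$).
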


\begin{proof}
(1) Take a toric subdatum $(\Tbf,x)$ of $(\Gbf,X)$, and extend $\Tbf$ to a maximal $\Qbb$-torus $\Hbf$ of $\Gbf$. Then by \ref{reflex-field} we know that the splitting field $F$ of $\Tbf$ contains $E$.

$F$ is a Galois extension of $\Qbb$, with $\Gal(F/\Qbb)$ isomorphic to the image of $$\Gal(\Qac/\Qbb)\ra\GL_\Zbb(\Xbf_\Hbf)\isom\GL_r(\Zbb),$$ where $\Xbf_\Hbf$ is the $\Gal(\Qac/\Qbb)$-module $\Hom_{\Gr/\Qac}(\Hbf_\Qac,\Gbb_{\mrm\Qac})$. Hence $\Gal(F/\Qbb)$ is isomorphic to a finite subgroup of $\GL_r(\Zbb)$. Apply the uniform upper bound $d(r)$ for the orders of finite subgroups of $\GL_r(\Zbb)$ in \cite{rockmore-tan}, we get $[E:\Qbb]\leq[F:\Qbb]= \#\Gal(F/\Qbb)\leq d(r)$.

(2) As we can embed $\GL_n(\Zbb)$ into $\GL_{n+1}(\Zbb)$ as a subgroup, the bounds $d(*)$ in (1) satisfies $d(n)\leq d(n+1)$ for all $n$. It is clear that $[E(\Gbf',X'):E]\leq[E(\Gbf',X'):\Qbb]$, and it suffices to take $c=d(r)$, as the rank of $\Gbf'$ does not exceeds that of $\Gbf$.\end{proof}

\begin{lemma}\label{pure-bound}
Let $S=\Gamma\bsh X^+$ be a connected Shimura variety defined by a pure Shimura datum $(\Gbf,X)$, with $\Cbf$ the connected center of $\Gbf$, and $S'\subset S$ a $\Cbf$-special subvariety. Let $F$ be a number field of definition for $S$, over which $S$ admits a canonical model. Then the cardinality of the set of $\Gal(\Qac/F)$-conjugates of $S'$ in $S$ is majorated by some constant independent of the choice of $\Cbf$-special subvariety $S'$.
\end{lemma}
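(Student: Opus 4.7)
The plan is to combine the uniform bound of Lemma \ref{rockmore-tan-bound}(2) with the abelianization description of connected components provided by Definition-Proposition \ref{canonical-model}. Realize $S$ as a geometrically connected component of $M_K(\Gbf,X)$ for some compact open subgroup $K\subset\Gbf(\adele)$, and enlarge $F$ if needed so that $F\supset E(\Gbf,X)$ and $F$ is a field of definition of this component; such an enlargement only inflates Galois orbits by a bounded factor and so does not affect the claimed uniformity. Any $\Cbf$-special subvariety $S'\subset S$ is then specified by the $\Gbf(\Qbb)$-conjugacy class of a $\Cbf$-special subdatum $(\Gbf',X')\subset(\Gbf,X)$ together with a point of the fiber above $[S]\in\pi_0(M_K(\Gbf,X))$ of the natural morphism $\pi_0(M_{K'}(\Gbf',X'))\to\pi_0(M_K(\Gbf,X))$, where $K'=K\cap\Gbf'(\adele)$.

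The Galois orbit of $S'$ in $S$ will be estimated by bounding separately its effect on each piece of this parametrization. First, the $\Gbf(\Qbb)$-conjugacy class of the subdatum is defined over its reflex field $E(\Gbf',X')$, hence admits at most $[E(\Gbf',X')\cdot F:F]\leq[E(\Gbf',X'):E(\Gbf,X)]$ conjugates under $\Gal(\Qac/F)$; by Lemma \ref{rockmore-tan-bound}(2) this quantity is majorated by a constant $c$ depending only on the rank of $\Gbf$. Second, for a fixed subdatum $(\Gbf',X')$, Definition-Proposition \ref{canonical-model}(2) identifies the map $\pi_0(M_{K'}(\Gbf',X'))\to\pi_0(M_K(\Gbf,X))$ with the homomorphism of finite abelian groups $\pitilde(\Gbf')/K'\to\pitilde(\Gbf)/K$, which via $\nu:\pitilde(\Gbf)\isom\pitilde(\Gbf^\ab)$ and its analogue for $\Gbf'$ is induced by the morphism of abelianizations $(\Gbf')^\ab\to\Gbf^\ab$. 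Since $\Cbf$ is the common connected center of $\Gbf'$ and $\Gbf$, we obtain factorizing isogenies $\Cbf\to(\Gbf')^\ab\to\Gbf^\ab$ whose composition is the canonical isogeny $\Cbf\to\Gbf^\ab$ with kernel $\Cbf\cap\Gbf^\der$, a fixed finite group depending only on $\Gbf$; since $(\Gbf')^\der\subset\Gbf^\der$, the kernel of $\Cbf\to(\Gbf')^\ab$ is a subgroup of $\Cbf\cap\Gbf^\der$, and a diagram chase shows that the kernel of $(\Gbf')^\ab\to\Gbf^\ab$ has order dividing $|\Cbf\cap\Gbf^\der|$. Sandwiching the induced map $\pitilde((\Gbf')^\ab)/\nu(K')\to\pitilde(\Gbf^\ab)/\nu(K)$ between the surjections from $\pitilde(\Cbf)/K_\Cbf$ (for $K_\Cbf=K\cap\Cbf(\adele)$) and invoking the remark preceding Definition \ref{c-special-rigid} twice then yields a uniform bound on the kernel, hence on the fiber above $[S]$, depending only on $\Gbf$ and $K$.

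Taking the product of these two bounds gives a uniform majoration of $|\Gal(\Qac/F)\cdot S'|$, independent of the choice of $\Cbf$-special subvariety $S'$. The main technical point, and the expected source of difficulty, is to verify via the reciprocity description of \ref{canonical-model}(2) and the functorial conjugation formalism of \ref{conjugate} that the Galois action genuinely respects this two-step decomposition (subdatum) $\times$ (component above $[S]$), so that multiplying the orbit bound on $\Gbf(\Qbb)$-conjugacy classes of subdata by the orbit bound on components indeed majorates the full Galois orbit inside $S$.
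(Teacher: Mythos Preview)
Your proposal is essentially correct and uses the same two ingredients as the paper: the uniform bound $[E(\Gbf',X'):E(\Gbf,X)]\leq c$ from Lemma~\ref{rockmore-tan-bound}(2), and the bounded kernel of the map on abelianizations $\pitilde((\Gbf')^\ab)/\nu'(K')\to\pitilde(\Gbf^\ab)/\nu(K)$ coming from the isogeny $\Cbf\to(\Gbf')^\ab\to\Gbf^\ab$ (the paper invokes \cite{ullmo-yafaev}~2.8 for this, which is the same remark you cite before Definition~\ref{c-special-rigid}).

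The only real difference is packaging. You parametrize $S'$ as (conjugacy class of subdatum)~$\times$~(component above $[S]$) and then worry, correctly, about whether the Galois action respects this decomposition. The paper sidesteps this entirely: it observes that $S'$, being the image of a geometric component of $M_{K'}(\Gbf',X')$, is defined over the field $E'_K$ characterized by $\Gal(E'^\ab/E'_K)=\Ker\bigl(\rec_{X'}:\Gal(E'^\ab/E')\to\pitilde(\Tbf')/\nu'(K')\bigr)$, and then bounds $[E'_K:E']\leq n\,[E_K:E]$ directly from the commutative square
\[
\xymatrix{\Gal(E'_K/E')\ar[r]^-{\subset}\ar[d]^{\Nm_{E'/E}} & \pitilde(\Tbf')/\nu'(K')\ar[d]\\ \Gal(E_K/E)\ar[r]^-{\subset} & \pitilde(\Tbf)/\nu(K).}
\]
Since the number of $\Gal(\Qac/E_K)$-conjugates of $S'$ is at most $[E'_K:E_K]\leq[E'_K:E']\cdot[E':E]\leq cn[E_K:E]$, one takes $F=E_K$ and is done. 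This route makes your ``main technical point'' automatic: there is no decomposition to verify, only a bound on a field of definition.
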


\begin{proof}
Up to shrinking $\Gamma$, we may assume that $\Gamma=K\cap\Gbf(\Qbb)_+$ for some \cosg $K\subset\Gbf(\adele)$. Then $S$ is a geometrically connected component of $M_K(\Gbf,X)$, and it is admits a canonical model over a finite abelian extension $E_K$ of $E$ the reflex field of $(\Gbf,X)$, which is given by the formula in \ref{reciprocity} $$\Gal(E^\ab/E_K)=\Ker(\rec_X:\Gal(E^\ab/E)\ra\pitilde(\Tbf)/\nu(K)).$$ Here $\nu:\Gbf\ra\Tbf=\Gbf/\Gbf^\der$ is the abelianization of $\Gbf$, and $\pitilde(\Tbf)/\nu(K)$ is a finite abelian group. Note that $\nu$ also induces an isogeny $\Cbf\ra\Tbf$, whose kernel is the finite $\Qbb$-group $\Hbf:=\Cbf\cap\Gbf^\der$.

Assume that $S'=\wp_\Gamma(X'^+)$ for some $\Cbf$-special subdatum $(\Gbf',X')\subset(\Gbf,X)$, with $X'^+\subset X^+$, and put $K'=K\cap\Gbf'(\adele)$. Then under the morphism $M_{K'}(\Gbf',X')\ra M_K(\Gbf,X)$ defined over the reflex field $E':=E(\Gbf',X')$ with $K'=K\cap\Gbf'(\adele)$, $S'\subset S$ is the image of a geometrically connected component of $M_{K'}(\Gbf',X')$. By the same arguments above, $S'$ is defined over $E'_K$ a finite abelian extension of $E'$, characterized by $$\Gal(E'^\ab/E'_K)=\Ker(\rec_{X'}:\Gal(E'^\ab/E')\ra\pitilde(\Tbf')/\nu'(K'))$$ where $\nu':\Gbf'\ra\Tbf'=\Gbf'/\Gbf'^\der$ is the abelianization, which also defines an isogeny $\Cbf\ra\Tbf'$ of kernel $\Hbf'=\Cbf\cap\Gbf^\der\subset\Hbf$. In particular we have an isogeny $\Tbf'\ra\Tbf$.

Consider the commutative diagram of finite abelian groups $$\xymatrix{\Gal(E'_K/E') \ar[d]^{\Nm_{E'/E}} \ar[r]^\subset & \pitilde{(\Tbf')}/\nu'(K') \ar[d]^{\Tbf'\ra\Tbf} \\
\Gal(E_K/E) \ar[r]^\subset & \pitilde(\Tbf)/\nu(K)}.$$ Here the left vertical arrow $\Gal(E'_K/E')\ra\Gal(E_K/E)$ is induced by $$\Nm_{E'/E}:\Gal(E'^\ab/E)\isom\pitilde(\Gbb_\mrm^{E'})\ra\pitilde({\Gbb_\mrm^E})\isom\Gal(E^\ab/E).$$ Applying \cite{ullmo-yafaev} 2.8, the kernel of the right vertical arrow $\pitilde(\Tbf')/\nu'(K')\ra\pitilde({\Tbf})/\nu(K)$ is majorated by  a constant integer $n(>0)$, which is independent of the choice of $(\Gbf',X')$. Hence $[E'_K:E']\leq n[E_K:E]$. Combined with the uniform upper bound of $[E':E]$, the number of $\Gal(\Qac/E_K)$-conjugates of $S'$ is bounded by $cn[E_K:E]$. It thus suffices to take $F=E_K$.\end{proof}

Then we consider the strongly rigid factor $M^\rig$.

\begin{lemma}\label{strongly-rigid-bound}

Let $M=\Gamma\bsh Y^+$ be the connected Kuga variety associated to $(\Pbf,Y)=\Vbf\rtimes_\rho(\Gbf,X)$, with $\rho:\Gbf\ra\GLbf_\Qbb(\Vbf)$ faithful and rigid, and that $\Cbf$ the connected center of $\Gbf$ splits over $\Qbb$. Assume that $M$ admits a canonical model over some number field $F$. Write $\pi:M\ra S=\pi(\Gamma)\bsh X^+$ for the canonical projection given by $\pi:(\Pbf,Y)\ra(\Gbf,X)$.

(1) If $M'$ is a $\Cbf$-special subvariety given by $(\Pbf',Y')=\Vbf'\rtimes(v\Gbf v^\inv, v\rtimes X)$, where $\Vbf'$ is some subrepresentation of $\Gbf$ in $\Vbf$, and $v\in\Vbf(\Qbb)$, then $\pi(M')=S$, and $M'$ is defined over $F\Qbb^\ab$.

(2) If $M'$ is a $\Cbf$-special subvariety given by a subdatum $(\Pbf',Y')=\Vbf'\rtimes(v\Gbf'v^\inv,v\rtimes X')$ whose image under $\pi$ is $S'\subset S$, with its canonical model over some number field $F'$ ($F\subset F'$), then $M'$ is defined over $F'\Qbb^\ab$.
\end{lemma}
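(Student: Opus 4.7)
My approach to both parts is to combine the functoriality of conjugates of Kuga varieties (\ref{conjugate}, \ref{finite-conjugate}) with the reciprocity description of the Galois action on geometric components (\ref{canonical-model}), exploiting crucially that $\Cbf$ splits over $\Qbb$ so that the relevant reciprocity map factors through the cyclotomic character.

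For (1), since $v \in \Vbf(\Qbb)$ is unipotent, conjugation by $v$ gives an isomorphism $(\Gbf,X) \isom (v\Gbf v^\inv, v\rtimes X)$ of pure data, so the pure base of $(\Pbf',Y')$ is mapped isomorphically onto $(\Gbf,X)$ by $\pi$; hence $\pi_*(Y'^+) = X^+$ and $\pi(M') = \wp_{\pi(\Gamma)}(X^+) = S$. For the field of definition I observe that $E(\Pbf',Y') = E(\Gbf,X) = E$ by \ref{reflex-field}(a), so by \ref{conjugate} any $\tau \in \Gal(\Qac/E)$ transfers $(\Pbf',Y')$ to an isomorphic subdatum of $(\Pbf,Y)$, and the conjugate $\tau(M')$ is a $\Cbf$-special subvariety of $M$ obtained from a $\Pbf(\Qbb)$-conjugate of $(\Pbf',Y')$ together with a shifted choice of connected component. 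The variation of $\tau(M')$ as $\tau$ varies is thus parametrized by the reciprocity on $\pi_0$ of the relevant subdatum's Kuga variety, which by \ref{canonical-model}(2) factors through $\pitilde(\Gbf^\ab)/\nu(K_\Gbf)$ with $\Gbf^\ab = \Gbf/\Gbf^\der$.

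The key claim is that this reciprocity factors through $\Gal(\Qbb^\ab/\Qbb)$. Since $\Cbf \ra \Gbf^\ab$ is an isogeny and $\Cbf$ is $\Qbb$-split by assumption, $\Gbf^\ab$ is $\Qbb$-split as well: character groups of isogeneous $\Qbb$-tori share the same Galois action up to finite index, so a quotient of a split torus by a finite $\Qbb$-subgroup is again split. Writing $\Gbf^\ab \isom \Gbb_\mrm^r$, we get $\pitilde(\Gbf^\ab) \isom \pitilde(\Gbb_\mrm)^r \isom \Gal(\Qbb^\ab/\Qbb)^r$, and the reciprocity composition
$$\Gal(\Qac/E) \ra \Gal(E^\ab/E) \ot{r_X}\lra \pitilde(\Gbf^\ab)$$
coincides (up to the cocharacter power $\mu_X$) with the norm $\Gal(E^\ab/E) \ra \Gal(\Qbb^\ab/\Qbb)^r$, applied componentwise, which is just the restriction of Galois elements to $\Qbb^\ab$. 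Any $\tau \in \Gal(\Qac/F\Qbb^\ab) \subset \Gal(\Qac/E\Qbb^\ab)$ restricts trivially to $\Qbb^\ab$, hence acts trivially on the finite set of relevant components, forcing $\tau(M') = M'$; therefore $M'$ is defined over $F\Qbb^\ab$.

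Part (2) follows by the same argument with $(\Gbf',X')$ in place of $(\Gbf,X)$ and $F'$ in place of $F$. The $\Cbf$-special condition forces the connected center of $\Gbf'$ to equal $\Cbf$ and thus to be $\Qbb$-split, so $\Gbf'^\ab$ is $\Qbb$-split and $\pitilde(\Gbf'^\ab)$ identifies up to finite kernel with a product of copies of $\Gal(\Qbb^\ab/\Qbb)$. Since $S'$ is defined over $F'$ by hypothesis, the only residual freedom for $M'$ above $S'$ is the choice of torsion translate and of connected component in the fibers of $\pi^\inv(S') \ra S'$, both governed by the reciprocity on $\pitilde(\Gbf'^\ab)/\nu(K_{\Gbf'})$; the same computation shows this reciprocity factors through the cyclotomic character, giving the field of definition $F'\Qbb^\ab$. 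I anticipate that the main obstacle will be verifying that for $\tau \in \Gal(\Qac/F')$ the $\Pbf(\Qbb)$-conjugacy class of $(\Pbf',Y')$ is $\tau$-invariant so that $\tau(M')$ differs from $M'$ only by data covered by this reciprocity, which should follow from \ref{conjugate} together with $E(\Pbf',Y') = E(\Gbf',X') \subset F'$.
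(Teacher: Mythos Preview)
Your approach is correct and rests on the same key observation as the paper: since $\Cbf$ is $\Qbb$-split, the abelianization $\Tbf=\Gbf/\Gbf^\der$ (isogeneous to $\Cbf$) is also split, so the reciprocity map $\rec_X:\Gal(E^\ab/E)\ra\pitilde(\Tbf)$ factors through the restriction $\Gal(E^\ab/E)\ra\Gal(\Qbb^\ab/\Qbb)$, and hence $\Gal(\Qac/E\Qbb^\ab)$ acts trivially on the relevant set of components.

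The paper's execution is, however, more direct than yours and bypasses the conjugate machinery of \ref{conjugate} and \ref{finite-conjugate} entirely. Rather than tracking how $\tau$ moves the subdatum $(\Pbf',Y')$ inside $(\Pbf,Y)$ and then arguing that only a component shift remains, the paper simply observes that $M'$ is the image of a geometrically connected component of $M_{K'}(\Pbf',Y')$ under the morphism $M_{K'}(\Pbf',Y')\ra M_K(\Pbf,Y)$, which is already defined over $E'=E(\Pbf',Y')$. Thus for $\tau\in\Gal(\Qac/E')$ the conjugate $\tau(M')$ is automatically another such component, and the whole question reduces to the $\Gal(\Qac/E')$-action on $\pi_0(M_{K'}(\Pbf',Y')_{\Qac})$. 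The paper then computes explicitly, using $K=K_\Vbf\rtimes K_\Gbf$, that $K'=K_{\Vbf'}\rtimes vK_\Gbf(v)v^\inv$ with $K_\Gbf(v)=\{g\in K_\Gbf:v-g(v)\in K_\Vbf\}$, so by \ref{canonical-model}(3) one has $\pi_0(M_{K'}(\Pbf',Y')_{\Qac})\isom\pi_0(M_{K_\Gbf(v)}(\Gbf,X)_{\Qac})$, and the reciprocity argument on this pure $\pi_0$ finishes the proof. Your anticipated ``main obstacle'' about the $\Pbf(\Qbb)$-conjugacy class of $(\Pbf',Y')$ being $\tau$-invariant simply does not arise in this framing.
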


\begin{proof}
Up to shrinking $\Gamma$, we may assume that $\Gamma=K\cap\Pbf(\Qbb)_+$, with $K$ a \cosg of $\Pbf(\adele)$ decomposed as $K=K_\Vbf\rtimes K_\Gbf$ along the rational Levi decomposition $\Pbf=\Vbf\rtimes\Gbf$. It then suffices to prove the lemma for $F=E=E(\Pbf,Y)$ and $F'=E'=E(\Pbf',Y')$.

(1) The abelianization $\Tbf=\Gbf/Gbf^\der$ is a split $\Qbb$-torus as it is isogeneous to $\Cbf$. Then the map $r_X:\Gbb_\mrm^E\ra\Tbf$ thus factors through $\Nm_{E/\Qbb}:\Gbb^E_\mrm\ra\Gbb_\mrm$, because any trivial quotient of $\Xbf_{\Gbb_\mrm^E}$ as a $\Gal(\Qac/\Qbb)$-module factors through $\Xbf_{\Gbb_\mrm^E}\ra\Xbf_{\Gbb_\mrm}$ given by the norm map.

Evaluated over $\pitilde(*)$, we see that $\rec_X:\Gal(E^\ab/E)\isom\pitilde(\Gbb_\mrm^E)\ra\pitilde(\Tbf)$ factors through $\Nm:\pitilde(\Gbb_\mrm^E)\ra\pitilde(\Gbb_\mrm)$, which is the same as the canonical homomorphism $\Gal(E^\ab/E)\ra\Gal(\Qbb^\ab/\Qbb)$ by global class field theory. In particular, the kernel of $\rec_X$ contains $\Gal(E^\ab/E\Qbb^\ab)$.

We have $M'$ a special subvariety defined by $(\Pbf',Y')=\Vbf'\rtimes(v\Gbf v^\inv, v\rtimes X)$, the latter admitting $(v\Gbf v^\inv, v\rtimes X)$ as a pure section. $M'$ is a geometrically connected component of the image of $M_{K'}(\Pbf',Y')\ra M_K(\Pbf,Y)$, where $K'=K\cap\Pbf'(\adele)$. Since $K=K_\Vbf\rtimes K_\Gbf$ and $\Pbf'=\Vbf'\rtimes v\Gbf v^\inv$, the group law formula in \cite{chen-rigid} gives us $K'=K_{\Vbf'}\rtimes vK_\Gbf(v) v^\inv$, where $K_{\Vbf'}=K_\Vbf\cap\Vbf'(\adele)$ and $$K_\Gbf(v)=\{g\in K_\Gbf:v-g(v)\in K_\Vbf\}.$$ The pure section $M_{vK_\Gbf(v)v^\inv}(v\Gbf v^\inv, v\rtimes X)$ of $M_{K'}(\Pbf',Y')$ is isomorphic to $M_{K_\Gbf(v)}(\Gbf,X)$. By \ref{canonical-model}  (3) we have $\pi_0(M_{K'}(\Pbf',Y')_\Qac)\isom\pi_0(M_{vK_\Gbf(v)v^\inv}(\Gbf,X))$, and thus each geometrically connected component of $M_{K'}(\Pbf',Y')$ is defined over $E\Qbb^\ab$.

(2) Since $M'$ is the image of a geometrically connected component of $M_{K'}(\Pbf',Y')$, $M'$ is defined over $E'\Qbb^\ab$ by the same arguments in (1), with $E'=E(\Pbf',Y')$. \end{proof}

Combining the two lemmas above we get:

\begin{corollary}\label{bound-galois}
Let $M=M^\rig\times S'$ be a Kuga variety defined by some Kuga datum $(\Pbf,Y)=\Vbf\rtimes(\Gbf,X)$, whose CM factor is trivial in the triple product decomposition. Let $F$ be a number field of definition over which $M$ admits canonical model, and write $\Cbf$ for the connected center of $\Gbf$. Then there exists a constant $c$ such that each $\Cbf$-special subvariety of $M$ is defined over $F'\Qbb^\ab$ with $F'$ a finite extension of $F$ of degree at most $c$. In fact, by the arguments in \ref{pure-equidistribution}, we can even require $F'$ to be finite Galois extension of $F$ of degree at most $c$.

\end{corollary}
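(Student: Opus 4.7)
My plan is to reduce the statement to the two preceding lemmas by decomposing any $\Cbf$-special subvariety as a product. Since the CM factor is trivial, we have $\Cbf=\Cbf^\rig\times\Cbf'$ with $\Cbf^\rig$ and $\Cbf'$ the connected centers of $\Gbf^\rig$ and $\Gbf'$ respectively; the argument behind Lemma~\ref{subdatum-compatibility}, applied to the two-factor decomposition $M=M^\rig\times S'$, shows that every $\Cbf$-special subvariety $M'\subset M$ splits as $M'=M'^\rig\times S''$, where $M'^\rig\subset M^\rig$ is $\Cbf^\rig$-special and $S''\subset S'$ is $\Cbf'$-special. I would first establish this compatibility carefully, as it is the geometric input that lets the two lemmas be applied independently.

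Next I would bound the field of definition of each factor uniformly. For $S''$, Lemma~\ref{pure-bound} supplies a constant $c_1$ (depending only on $(\Gbf',X')$) such that $S''$ has at most $c_1$ $\Gal(\Qac/F)$-conjugates in $S'$; equivalently, $S''$ is defined over some extension $F_1/F$ with $[F_1:F]\leq c_1$. For $M'^\rig$, Lemma~\ref{strongly-rigid-bound}(2) gives that $M'^\rig$ is defined over $E'\Qbb^\ab$, where $E'$ is the reflex field of the $\Cbf^\rig$-special subdatum cutting it out. Lemma~\ref{rockmore-tan-bound}(2) bounds $[E':E(\Pbf^\rig,Y^\rig)]$ by a constant $c_2$ depending only on the rank of $\Gbf^\rig$; since $F$ contains $E(\Pbf^\rig,Y^\rig)$, the compositum $F_2:=F\cdot E'$ satisfies $[F_2:F]\leq c_2$, and $M'^\rig$ is defined over $F_2\Qbb^\ab$.

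Putting these together, $M'=M'^\rig\times S''$ is defined over the compositum $F_1F_2\Qbb^\ab$, and the field $F':=F_1F_2$ has $[F':F]\leq c_1c_2=:c$, proving the first assertion. For the Galois refinement at the end of the statement, the crudest option is to pass to the Galois closure of $F'$ over $F$, whose degree is at most $c!$ and is therefore still uniformly bounded; more cleanly, and as presumably systematized in Section~\ref{pure-equidistribution}, both $F_1$ and $F_2$ can already be arranged Galois over $F$ -- the latter because reflex fields are contained in splitting fields of $\Qbb$-tori (hence in abelian extensions of $\Qbb$), and the former by replacing $F_1$ with the fixed field of the stabilizer of the $\Gal(\Qac/F)$-orbit of $S''$ and invoking Lemma~\ref{pure-bound} on that orbit. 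I do not anticipate any serious obstacle beyond verifying the product decomposition of $\Cbf$-special subvarieties in the two-factor case; all the uniform bounds have already been assembled in the earlier lemmas of the section.
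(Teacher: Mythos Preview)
Your proposal is correct and matches the paper's own treatment: the paper states this corollary without proof, prefacing it only with ``Combining the two lemmas above we get,'' and your argument is precisely the intended combination of Lemma~\ref{subdatum-compatibility} (to split $M'=M'^\rig\times S''$), Lemma~\ref{pure-bound} (to bound the field of definition of $S''$), and Lemma~\ref{strongly-rigid-bound}(2) together with Lemma~\ref{rockmore-tan-bound}(2) (to bound that of $M'^\rig$ inside $\Qbb^\ab$). One small remark: in your treatment of $M'^\rig$ you invoke the reflex field $E'$ of the subdatum directly via Lemma~\ref{rockmore-tan-bound}(2), whereas the statement of Lemma~\ref{strongly-rigid-bound}(2) is phrased in terms of a field of definition $F'$ for the pure image $\pi(M'^\rig)$; either route works, since the proof of Lemma~\ref{strongly-rigid-bound}(2) itself reduces to $E'\Qbb^\ab$, and alternatively one could simply apply Lemma~\ref{pure-bound} a second time to the pure base $\pi(M^\rig)$.
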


Next we recall some results   on the torsion points of an abelian variety with value in some infinite abelian extensions:

\begin{theorem}\label{ribet-finiteness}[cf.\cite{ribet-finiteness}] Let $A$ be an abelian variety defined over a number field $F$. For $k$ a field extension of $F$, write $A(k)_\tor$ for the torsion subgroup of $A(k)$. Then

(1) $A(F\Qbb^\ab)_\tor$ is finite, where $F\Qbb^\ab$ is the composition of $F$ with $\Qbb^\ab=\bigcup_N\Qbb(\mu_N)$;

(2) if $A$ is a simple CM abelian variety, with complex multiplication by a CM field $E$, then $A(\Qac)_\tor$ is contained in $A(FE^\ab)$.
\end{theorem}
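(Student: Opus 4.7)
My plan is to prove (1) and (2) separately using $\ell$-adic techniques; (2) will be considerably simpler. For (1), the strategy is to reduce, for each prime $\ell$, to finiteness (or vanishing) of $A[\ell^\infty]^H$, where $H := \Gal(\Qac/F\Qbb^\ab)$; the direct sum $A(F\Qbb^\ab)_\tor = \bigoplus_\ell A[\ell^\infty]^H$ then yields the result. Writing $\chi_\ell : \Gal(\Qac/F) \to \Zbb_\ell^\times$ for the $\ell$-adic cyclotomic character and noting that $\Gal(F\Qbb^\ab/F)$ embeds into $\prod_\ell \Zbb_\ell^\times$, one has $H = \bigcap_\ell \ker \chi_\ell$.

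The main input I would invoke is Bogomolov's theorem on the $\ell$-adic Galois representation $\rho_\ell : \Gal(\Qac/F) \to \Aut(T_\ell A)$: its image contains an open subgroup of the homothety subgroup $\Zbb_\ell^\times \cdot \id$. Combined with the Weil-pairing identity $\det \rho_\ell = \chi_\ell^g$ (where $g = \dim A$), an element $\sigma$ with $\rho_\ell(\sigma) = c \cdot \id$ satisfies $\chi_\ell(\sigma)^g = c^{2g}$, so $\chi_\ell(\sigma) = c^2$ up to a $g$-th root of unity. For $\ell$ sufficiently large (coprime to $2g$ and such that Bogomolov's open subgroup is all of $\Zbb_\ell^\times \cdot \id$), I would produce $\sigma \in H$ with $\rho_\ell(\sigma) = -\id$, whose fixed points on odd $\ell$-primary torsion are zero; this kills $A[\ell^\infty]^H$ for almost all $\ell$. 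The main obstacle will be the finitely many excluded primes $\ell$, for which I would pass to Lie algebras: $\Lie \rho_\ell(\Gal(\Qac/F))$ contains $\Qbb_\ell \cdot \id$ by Bogomolov, and $\rho_\ell(H)$ is normal in $\rho_\ell(\Gal(\Qac/F))$ with abelian quotient (since $\Gal(\Qac/F)/H$ is abelian), from which one should extract a semisimple element of $\Lie \rho_\ell(H)$ acting with zero fixed subspace on $V_\ell A$, giving finiteness of $A[\ell^\infty]^H$.

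For (2), assume $A$ is simple CM by the CM field $E$; then $V_\ell A$ is a free $E \otimes_\Qbb \Qbb_\ell$-module of rank one, and $\rho_\ell$ factors through the commutant $(E \otimes \Qbb_\ell)^\times$, an abelian group. In particular $F(A[\ell^\infty])/F$ is an abelian extension. I would then apply the main theorem of complex multiplication (Shimura-Taniyama) to identify this representation with the $\ell$-adic realization of a Hecke character attached to $E$ and the CM type, which cuts out an extension contained in $FE^\ab$; taking the union over all $\ell$ gives $A(\Qac)_\tor \subset A(FE^\ab)$.
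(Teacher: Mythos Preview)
The paper does not give its own proof of this theorem; it is quoted from Ribet's appendix to Katz--Lang (the reference \cite{ribet-finiteness}) and then used as a black box in Corollary~\ref{uniform-finiteness}. So there is no ``paper's proof'' to compare against, and your sketch should be judged on its own merits. Your invocation of Bogomolov's theorem on homotheties is exactly the right ingredient, and is indeed the core of Ribet's argument; your sketch for (2) via the main theorem of complex multiplication is also correct in outline.

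That said, part (1) of your sketch has a genuine gap. In your ``large $\ell$'' step you want $\sigma\in H$ with $\rho_\ell(\sigma)=-\id$, but membership in $H=\bigcap_p\ker\chi_p$ constrains $\sigma$ at \emph{every} prime $p$, whereas your Weil-pairing computation $\chi_\ell(\sigma)=c^2$ only controls $\chi_\ell$. You give no mechanism for forcing $\chi_p(\sigma)=1$ when $p\neq\ell$, and Bogomolov alone does not supply one. (Nor does Bogomolov say that the open subgroup of homotheties is all of $\Zbb_\ell^\times\cdot\id$ for large $\ell$; that is a separate uniformity statement.) One way to repair this is to restrict to the inertia subgroup $I_\lambda$ at a prime $\lambda\mid\ell$: then $\chi_p|_{I_\lambda}=1$ for $p\neq\ell$ automatically, so $I_\lambda\cap H=I_\lambda\cap\ker\chi_\ell$, and one analyses $\rho_\ell|_{I_\lambda}$ instead.

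Your Lie-algebra step for the remaining primes is also too vague as stated. Knowing that $\rho_\ell(H)$ is normal with abelian quotient gives $[\gfrak_\ell,\gfrak_\ell]\subset\hfrak_\ell$, but scalars are \emph{central} and hence do not lie in $[\gfrak_\ell,\gfrak_\ell]$; so this alone does not produce the invertible semisimple element you want. The missing observation (which is the heart of Ribet's argument) is that $\gfrak_\ell/\hfrak_\ell$ has $\Qbb_\ell$-dimension at most $1$: indeed $\rho_\ell(G)/\rho_\ell(H)$ is a compact abelian $\ell$-adic Lie group which is a continuous quotient of $G/H\subset\hat\Zbb^\times$, and any continuous homomorphism from $\prod_{p\neq\ell}\Zbb_p^\times$ to an $\ell$-adic Lie group has finite image. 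Since the similitude character $\nu$ (with $\nu\circ\rho_\ell=\chi_\ell$) already gives a surjection $\gfrak_\ell\to\Qbb_\ell$ killing $\hfrak_\ell$, one gets $\hfrak_\ell=\ker(d\nu)$. A nonzero $v\in V_\ell^{\hfrak_\ell}$ would then span a $\gfrak_\ell$-line on which Galois acts by a character whose square is $\chi_\ell$; a Frobenius-weight (purity) argument rules this out.
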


\begin{corollary}\label{uniform-finiteness}
Let $A$ be an abelian variety over a number field $F$.

(1) Fix a constant $c>0$, and let $\Sscr=\{L\}$ be the set of Galois extensions of $F$ of degree at most $c$ contained in a fixed algebraic closure of $F$ (or simply contained in $\Cbb$). Then the torsion part of $\bigcup_\Sscr A(L\Qbb^\ab)$ is finite. In particular there exists a finite extension $F'$ over $F$ such that the torsion part of $\bigcup_\Sscr A(L\Qbb^\ab)$ is contained in the torsion subgroup of $A(F')$.

(2) Let $(a_n)_n$ be a strict sequence of torsion points in $A(\Qac)$. Write $\Oscr'_n$ for the $\Gal(\Qac/F\Qbb^\ab)$-orbits of $a_n$ and $\mu'_n$ the associated averaged Dirac measure. Then $\mu'_n$ converges to the Haar measure on $A(\Cbb)$. 

(2)' The same is true when we replace $\Oscr_n'$ by the $\Gal(\Qac/F_n^\ab)$-orbit of $a_n$, where $(F_n)_n$ is a family of finite Galois extensions of $F$ of degree at most $c$, $c$ being some constant ($>0$).

\end{corollary}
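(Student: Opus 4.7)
The plan is to prove the three parts (1), (2), (2)' in order, with Part (1) providing the uniform torsion finiteness that underlies the equidistribution claims (2) and (2)'.

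For Part (1), the starting point is Ribet's Theorem \ref{ribet-finiteness}(1) applied to $A/F$, giving that $T_0 := A(F\Qbb^\ab)_\tor$ is a finite group. For each $L \in \Sscr$, the extension $L\Qbb^\ab/F\Qbb^\ab$ is Galois of degree dividing $[L:F] \leq c$, so any torsion point $t \in A(L\Qbb^\ab)_\tor$ has a $\Gal(L\Qbb^\ab/F\Qbb^\ab)$-orbit of size $\leq c$ and is therefore defined over an extension of $F\Qbb^\ab$ of degree $\leq c$. The key step is a uniform torsion bound: the set of torsion points of $A$ defined over extensions of $F\Qbb^\ab$ of degree $\leq c$ is finite. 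I plan to establish this prime-by-prime, using a cohomological estimate for the $\ell$-primary torsion gained under a degree-$\leq c$ extension (controlled by the action of a Galois group of order $\leq c$ on $A[\ell^n]$), combined with Serre's open image theorem, which ensures that for all but finitely many $\ell$ no new $\ell$-torsion appears beyond $T_0[\ell^\infty]$. The required number field $F'$ is then obtained by adjoining the coordinates of these finitely many torsion points to $F$.

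For Part (2), by Szpiro-Ullmo-Zhang (Theorem \ref{ullmo-icm}) the averages $\mu_n$ over the $\Gal(\Qac/F)$-orbits $\Oscr_n$ converge weak-$*$ to the Haar measure $\mu$ on $A(\Cbb)$. Since $\Gal(\Qac/F\Qbb^\ab)$ is normal in $\Gal(\Qac/F)$, the orbit decomposes as $\Oscr_n = \bigsqcup_{j=1}^{k_n} \sigma_j \Oscr'_n$ into $\Gal(\Qac/F\Qbb^\ab)$-orbits of equal size, so that $\mu_n = \frac{1}{k_n}\sum_{j=1}^{k_n} (\sigma_j)_* \mu'_n$ with $k_n = |\Oscr_n|/|\Oscr'_n| = [F(a_n)\cap F\Qbb^\ab : F]$. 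I then form the new sequence $(b_m)_m$ by enumerating the elements of the $\Oscr'_n$'s, with the $n$-th block containing the $|\Oscr'_n|$ elements of $\Oscr'_n$. Strictness of $(a_n)_n$ transfers to $(b_m)_m$: if infinitely many $b_m$'s lay in a proper torsion subvariety $A' \subsetneq A$, then because $A'$ is defined over a number field it has only finitely many $\Gal(\Qac/F)$-conjugates; writing each such $b_m = \sigma a_n$ with $\sigma \in \Gal(\Qac/F\Qbb^\ab)$ gives $a_n \in \sigma^{-1} A'$, forcing infinitely many $a_n$'s into a finite union of proper torsion subvarieties, contradicting strictness. Using Part (1) (together with its extension controlling the cyclotomic subfield $F(a_n) \cap F\Qbb^\ab$) to bound $k_n$ uniformly by some constant $C = [F' : F]$, the decomposition combined with weak-$*$ compactness and a convexity argument forces every subsequential limit of $\mu'_n$ to equal $\mu$, whence $\mu'_n \to \mu$.

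Part (2)' follows by the same argument with $F\Qbb^\ab$ replaced by $F_n\Qbb^\ab$ (interpreting the notation $F_n^\ab$ here as the cyclotomic extension of $F_n$); since the family $\{F_n\}$ of degree-$\leq c$ Galois extensions of $F$ is a subset of $\Sscr$, the uniform torsion bound of Part (1) applies and yields boundedness of the relevant indices. The main technical obstacle is Part (1), which is a genuine uniform strengthening of Ribet's Theorem to bounded-degree extensions of the cyclotomic tower; once this is in hand, Parts (2) and (2)' reduce to the enumeration-plus-convexity arguments together with Theorem \ref{ullmo-icm} and the Galois orbit structure described above.
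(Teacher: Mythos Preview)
Your plan for Part~(1) invokes Serre's open image theorem, but that theorem is only available for non-CM elliptic curves and a handful of special families; for a general abelian variety it is part of the Mumford--Tate conjecture and is open. So the step ``for all but finitely many $\ell$ no new $\ell$-torsion appears'' is not justified. The paper's argument avoids this entirely: it argues by contradiction, picking $a_n\in A(L_n\Qbb^{\ab})_\tor$ of orders $d_n\to\infty$, observing that $L_n\Qbb^{\ab}/F$ is Galois so that $\Gal(\Qac/L_n\Qbb^{\ab})$ is normal in $\Gal(\Qac/F)$, hence the entire $\Gal(\Qac/F)$-orbit of $a_n$ already lies in $A(L_n\Qbb^{\ab})$; combined with Ribet's finiteness $A(F\Qbb^{\ab})_\tor=A[d_0]$ this forces $[L_n\Qbb^{\ab}:F\Qbb^{\ab}]\to\infty$, contradicting $[L_n:F]\le c$.

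The more serious gap is in Part~(2). Your argument rests on the claim that $k_n=[F(a_n)\cap F\Qbb^{\ab}:F]$ is uniformly bounded, and you try to extract this from Part~(1). But the claim is false. Take $E$ an elliptic curve with CM by $\mathcal{O}_K$, set $F=K$, and let $\ell$ be inert in $K$: then for any nonzero $a\in E[\ell]$ one has $K(a)=K(E[\ell])\supset K(\zeta_\ell)$, so $k_n\ge[K(\zeta_\ell):K]$, which is unbounded as $\ell\to\infty$. Part~(1) says nothing about the cyclotomic content of $F(a_n)$, and the unspecified ``extension'' you invoke does not exist. Even granting a bound on $k_n$, your convexity step is incomplete: in $\mu_n=\frac{1}{k_n}\sum_j(\sigma_j)_*\mu'_n$ the coset representatives $\sigma_j$ depend on $n$, so $\mu_n\to\mu$ together with $\mu'_{n_i}\to\nu$ along a subsequence does not pin down $\nu$. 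The paper's route is entirely different: it uses that the Mumford--Tate group $\Gbf\subset\GSp_{2g}$ contains the central $\Gbb_\mrm$, and that the composite $\Gal(\Qac/F)\to\Gbf(\Zbhat)\to\Gbb_\mrm(\Zbhat)$ is the cyclotomic character. After a finite base change $F\to F'$ this makes $\Gal(F'\Qbb^{\ab}/F')$ act on $A_\tor$ through the center of $\Gbf(\Zbhat)$, hence it stabilizes each $\Gal(\Qac/F'\Qbb^{\ab})$-orbit; thus over $F'$ the sub-orbit $\Oscr'_n$ coincides with the full Galois orbit, and Szpiro--Ullmo--Zhang applies directly. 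Part~(2)$'$ is then deduced from (1) and (2) by a bounded-index averaging, not from an independent bound on $k_n$.
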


\begin{proof}

(1)  It suffices to prove the existence of such an $F'$ for each simple factor of $A$, and thus we assume for simplicity that $A$ is a simple. We may also enlarge the base field such that $F\supset E$ and that $A(F\Qbb^\ab)_\tor=A[d_0]$ for some $d_0\in\Nbb_{>0}$.
  
  Assume that $\bigcup_{L\in\Sscr}A(L\Qbb^\ab)_\tor$ is infinite.  Then $\forall n\in\Nbb$, there exists $L_n\in\Sscr$ such that $A(L_n\Qbb^\ab)$ contains a torsion point $a_n$ of order $d_n$ such that $d_n< d_{n+1}$ for all $n\in\Nbb$. We take for simplicity that $L_0=F$, and that $d_0|d_n$ for all $n$. We then have $\Gal(\Qac/L_n\Qbb^\ab)$ acting trivially on $<a_n>\subset A(L_n\Qbb^\ab)$ (the torsion subgroup of $A(L_n\Qbb^\ab)$ generated by $a_n$). Since $L_n\Qbb^\ab$ is a Galois extension of the base field $F$, $\Gal(L_n\Qbb^\ab)$ acts trivially on $<\sigma a_n>$ for any $\sigma\in\Gal(\Qac/F)$. Since $A(F\Qbb^\ab)_\tor=A[d_0]$, and $\dfrac{d_n}{d_0}$ tends to $\infty$ as $n\ra\infty$, we thus have $\#\Gal(L_n\Qbb^\ab/F\Qbb^\ab) \ra\infty$ as $n\ra\infty$, which contradicts the estimation $[L_n\Qbb^\ab:F\Qbb^\ab]\leq[L_n:F]\leq c$ $\forall n$.

(2) Take a Drinfeld basis for the total Tate module $\Tbb(A):=\limproj_nA[n]$, so that $A[n]\isom(\dfrac{1}{n}\Zbb/\Zbb)^{2g}$, $g$ being the dimension of $A$ over $F$. Write $\Gbf$ for the Mumford-Tate group of $A$, which is a $\Qbb$-subgroup of $\GSp_{2g}$ for some polarization of $H^1(A(\Cbb),\Qbb)$. Note that $\Gbf$ contains the center $\Gbb_\mrm$ of $\GSp_{2g}$.

Since $A(F\Qbb^\ab)_\tor$ is finite, the action of $\Gal(\Qac/F\Qbb^\ab)$ on $A_\tor=\limind_nA[n]$ factors through $\Gbf(\Zbhat)=\limproj_n\Gbf(\Zbb/n\Zbb)$, and further through $\Gal(\Qac/L)\ra\Gbf(\Zbhat)$ with $\Gal(\Qac/L):=\Ker(\Gal(\Qac/F)\ra\Aut(A(\Qac)_\tor))$. On the other hand, the action of $\Gal(F\Qbb^\ab/F)$ fits into a commutative diagram $$\xymatrix{\Gal(\Qac/F) \ar[d] \ar[r] & \Gbf(\Zbhat) \ar[d] \\ \Gal(F\Qbb^\ab/F) \ar[r] & \Gbb_\mrm(\Zbhat)}$$ where the right vertical arrow is given by $\Gbf\mono\GSp_{2g}\ra\Gbb_\mrm$. As $\Gbb_\mrm$ lifts  to the center of $\GSp_2g$ up to some isogeny, we see that replacing $F$ by a finite extension $F'$ we have $\Gal(F'\Qbb^\ab/F')$ acts on $A_\tor$ through   $\Gbb_\mrm(\Zbhat)$ in the center of $\Gbf(\Zbhat)$, and thus it stabilizes each $A[n]$ and the $\Gal(\Qac/F'\Qbb^\ab)$-orbit of $a_n$ for each $n$. It thus follows that the $\Gal(\Qac/F'\Qbb^\ab)$-orbits of $(a_n)_n$ are equidistributed on $A(\Cbb)$ \wrt the Haar measure. By the same averaging process in \ref{rigid-equidistribution}, the equidistribution also holds for the $\Gal(\Qac/F\Qbb^\ab)$-orbits of the $a_n$'s.

(2)' We take $L$ a finite extension of $F$ such that $\bigcup_nA(F_n\Qbb^\ab)_\tor\subset A(L)_\tor$. Then $F_n\Qbb^\ab\subset L\Qbb^\ab$, and $[F_n\Qbb^\ab:L\Qbb^\ab]$ is uniformly bounded. Since the $\Gal(\Qac/L\Qbb^\ab)$-orbits of the $a_n$'s are equidistributed, the same arguments as in \ref{rigid-equidistribution}   shows that the $\Gal(F_n\Qbb^\ab)$-orbits are equidistributed.  \end{proof}

We now consider the Galois conjugates of $\Cbf$-special subvarieties $(M_n)_n$in a triple product $M=M^\cm\times M^\rig\times S'$ as in \ref{assumption}.

\begin{proof}[Proof of the main theorem]

We are given a $\Cbf$-strict sequence of $\Cbf$-special subvarieties $(M_n)_n$ in $M$, with $M_n=M_n^\cm\times M_n^\rig\times S'_n$, compatible with the triple product decomposition $M=M^\cm\times M^\rig\times S'$.

Write $M^\ncm=M^\rig\times S'$, which is given by the Kuga datum $(\Pbf^\ncm,Y^\ncm)=\Wbf\rtimes(\Gbf^\ncm,X^\ncm)=(\Wbf\rtimes(\Gbf^\rig,X^\rig))\times(\Gbf',X')$. Write $\Cbf^\ncm$ for the connected center of $\Gbf^\ncm$. Then $(M_n^\ncm)_n$ is a $\Cbf^\ncm$-strict sequence of $\Cbf^\ncm$-special subvarieties.

Let $F^\ncm$ be a number field of definition over which $M^\ncm$ admits a canonical model. By \ref{} and \ref{}, each $M_n^\ncm$ is defined over $F_n^\ncm\Qbb^\ab$, where $F_n^\ncm$ is some finite Galois extension over $F^\ncm$ of degree at most $c$, $c$ being some constant only depends on the rank of $\Gbf^\ncm$.

$M^\cm$ is a CM abelian variety, defined by the Kuga datum $(\Pbf^\cm,Y^\cm)=\Ubf\rtimes(\Tbf,x)$, and admits a canonical model over $F^\cm$ a finite abelian extension of the reflex field $E=E(\Tbf,x)$, and each special subvariety of $M^\cm$ is $\Tbf$-special, defined over $E^\ab$. Up to enlarging $F^\cm$, we may also assume that each simple abelian subvariety of $M^\cm_\Qac$ is defined over $F^\cm$.

We take $F=F^\cm F^\ncm$ as the number field of definition of a canonical model of $M=M^\cm\times M^\ncm$. $F_n:=FF_n^\ncm$ is a finite Galois extension of $F$ with degree $\leq c$, and thus $\bigcup_n M^\cm(F_n\Qbb^\ab)_\tor$ is contained in some $M^\cm(L)_\tor$ for a finite extension $L$ of $F$. 

The $\Gal(\Qac/F)$-action on the special subvariety $M_n=M^\cm_n\times M^\ncm_n$ is diveded into two steps:

(i) The action of $\Gal(\Qac/F_n\Qbb^\ab)$ on $M_n$, which fixes the second factor $M_n^\ncm$ as the latter is already fixed by $\Gal(\Qac/F_n\Qbb^\ab)$. In this case write $\Oscr'^\cm_n$ for the $\Gal(\Qac/F_n\Qbb^\ab)$-orbit of $M^\cm_n$, and $\mu_n'^\cm$ for the averaged canonical measure over $\Oscr'^\cm_n$. Then $\mu'^\cm_n$ already converges to the Haar measure of $M^\cm(\Cbb)$, by \ref{uniform-finiteness}(2)'.

(ii) $\Gal(F_n\Qbb^\ab/F)$ on the $\Gal(\Qac/F_n\Qbb^\ab$-orbit of $M_n$, whose effect on $M_n^\ncm$ is the original one mentioned in \ref{bound-galois}. In this case it acts on the factor $M_n^\cm$ through a finite quotient of bounded degree $\leq c$, whose images are torsion subvarieties with translations given by a subset of $M^\cm(L)_\tor$. Since each simple abelian subvariety of $M^\cm$ is already defined over $F$, the canonical measures associated to these $\Gal(F_n\Qbb^\ab/F)$-orbits are of the form $$(\mu'^\cm_n\otimes\mu_{M^\ncm})*(\mu'^\ncm_n)$$ where:

(ii-1) $\mu'^\ncm_n$ is the average of the canonical measures associated to the $\Gal(F_n\Qbb^\ab/F)$-orbit of ${a}\times M_n^\ncm$, with $a$ runs through the torsion points in $M^\cm(L)_\tor$ which translate an abelian subvariety $A_n$ into a geometrically irreducible component of the $\Gal(\Qac/F)$-orbit of $M^\cm_n$;

(ii-2) $*$ is the convolution of measures on $M^\cm\times M^\ncm$, compatible with the translation by $M^\cm(\Cbb)$ on the first factor. 

Applying the same arguments as in \ref{rigid-equidsitribution}, and using the equidistribution on each factor, we get the equidistribution of the $\Gal(\Qac/F)$-orbits of the $M_n$'s with respect the canonical measure on $M(\Cbb)$.\end{proof}

\section{Appendix}

In the Appendix we sketch the proof of some functorial properties of the conjugate of mixed Shimura data and mixed Shimura varieties under an automorphism $\tau\in\Aut(\Cbb/\Qbb)$, which are essentially reductions to the works of Milne and Shih, cf.\cite{milne-shih-taniyama}, \cite{milne-shih-conjugate}, \cite{milne-conjugate}.

We begin by recalling the basic notions of mixed Shimura data and mixed Shimura varieties, following \cite{pink-thesis} 2.1 (see also \cite{chen-rigid} 1.1).

\begin{definition}\label{mixed-shimura-datum-variety}

(1) A mixed Shimura datum is of the form $(\Pbf,\Ubf,Y)$, constructed out of the following data:

(MS-1) $(\Gbf,X)$ a pure Shimura datum in the sense of \cite{deligne-pspm} 2.1.1 ;

(MS-2) A finite-dimensional algebraic representation $\rho_\Vbf:\Gbf\ra\GLbf_\Qbb(\Vbf)$, such that for any $x\in X$, $(\Vbf,\rho_\Vbf\circ x)$ is a rational Hodge structure of type $\{(-1,0),(0,-1)\}$;

(MS-3) A second finite-dimensional algebraic representation $\rho_\Ubf:\Gbf\ra\GLbf_\Qbb(\Ubf)$, such that for any $x\in X$, $(\Ubf,\rho_\Ubf\circ x)$ is a rational Hodge structure of type $(-1,-1)$;

(MS-4) An alternating bilinear map $\psi:\Vbf\times\Vbf\ra\Ubf$, equivariant \wrt the action of $\Gbf$ through $\rho_\Vbf$ and $\rho_\Ubf$; $\psi$ defines a central extension $\Wbf$ of $\Vbf$ by $\Ubf$, and the connected center of $\Gbf$ acts on $\Wbf$ through a $\Qbb$-torus isogeneous to some $\Hbf\times\Gbb_\mrm^d$, with $\Hbf$ a compact $\Qbb$-torus;

(MS-5) $\Pbf=\Wbf\rtimes\Gbf$ for the action of $\Gbf$ on $\Wbf$ as above, and $Y$ is the $\Pbf(\Rbb)\Ubf(\Cbb)$-orbit of $X$ in $\Yfrak(\Pbf)$; here $$\Yfrak(\Pbf):=\Hom_{\Group/\Cbb}(\Sbb,\Pbf_\Cbb)\supset\Yfrak(\Gbf)\supset\Xfrak(\Gbf):=\Hom_{\Group/\Rbb}(\Gbf)\supset X;$$ note that $\Ubf$ is also a normal unipotent $\Qbb$-subgroup of $\Pbf$;

(MS-6) $\Pbf$ is minimal \wrt $Y$ in the sense that if $\Pbf'\subset\Pbf$ then there exists some $y\in Y$ such that $y(\Sbb_\Cbb)\nsubseteq\Pbf'_\Cbb$; note that this implies
the minimality of $\Gbf$ \wrt $X$.

We thus write $(\Pbf,\Ubf,Y)=(\Vbf *_\psi\Ubf)\rtimes_\rho(\Gbf,X)$ to indicate the construction above.

Morphisms between mixed Shimura data are defined in the evident way: they are of the form $(f,f_*):(\Pbf,\Ubf,Y)\ra(\Pbf',\Ubf',Y')$ with $f:\Pbf\ra\Pbf'$ a homomorphism of $\Qbb$-groups sending $\Ubf$ into $\Ubf'$, and the induced map $f_*:\Yfrak(\Pbf)\ra\Yfrak(\Pbf')$ sends $Y$ into $Y'$, equivariant \wrt $f:\Pbf(\Rbb)\Ubf(\Cbb)\ra\Pbf'(\Rbb)\Ubf'(\Cbb)$. Subdata are given by morphisms $(f,f_*)$ such that both $f:\Pbf\ra\Pbf'$ and $f_*:Y\ra Y'$ are injective.

Notions such as products and quotients by a normal $\Qbb$-subgroups are also defined, cf. \cite{chen-rigid} 1.1. In particular, for $(\Pbf,\Ubf,Y)$ constructed as above we have the canonical projection $\pi:(\Pbf,\Ubf,Y)\ra(\Gbf,X)$, which admits pure section, namely $(\Gbf,X)\ra(\Pbf,\Ubf,Y)$ in the way they are defined. All the maximal pure subdata of $(\Pbf,\Ubf,Y)$ are of the form $(w\Gbf w^\inv,wX)$.

The reflex field of $(\Pbf,\Ubf,Y)$ is defined in the same way as \ref{reflex-field}, and equals $E(\Gbf,X)$, argued through the same functorial properties as in \ref{reflex-field}.

(2) For $(\Pbf,\Ubf,Y)$ a mixed Shimura datum and $K\subset\Pbf(\adele)$ a \cosg, we have  the mixed Shimura variety at level $K$, denoted as $M_K(\Pbf,Y)$, which is a quasi-projective reduced variety over $E(\Pbf,Y)$, whose complex poitns are given as $$M_K(\Pbf,Y)(\Cbb)=\Pbf(\Qbb)\bsh[Y\times\Pbf(\adele)/K]$$ Here by the $E(\Pbf,Y)$-rational structure is meant the canonical model of $M_K(\Pbf,Y)$ over $E(\Pbf,Y)$, which is defined in the same manner as in \ref{canonical-model} using toric subdata and the reciprocity law.

With $K$ shrinking, we get the toral mixed Shimura scheme $M(\Pbf,Y)=\limproj_KM_K(\Pbf,Y)$, where the transition maps are defined over $E(\Pbf,Y)$, whose formula over $\Cbb$ are the evident ones. On $M(\Pbf,Y)$ also acts the locally profinite group $\Pbf(\adele)$ via Hecke traslations.

Morphisms between mixed Shimura varieties are also defined in the evident way via morphisms between mixed Shimura data.

And if $K=K_\Wbf\rtimes K_\Gbf$ for \cosgs $K_\Wbf\subset\Wbf(\adele)$ and $K_\Gbf\subset\Gbf(\adele)$, then the canonical projection $M_K(\Pbf,Y)\ra M_{K_\Gbf}(\Gbf,X)$ is geometrically connected, whose fibers are torus bundles over abelian varieties. Thus we often reduce the study of $\pi_0(M_K(\Pbf,Y)_\Qac)$ to the pure case.
\end{definition}

\begin{definition}[cf.\cite{milne-shih-taniyama}, \cite{milne-shih-conjugate}] \label{serre-group}

(1) Let $\Mbf^\circ$ be the category of rational Hodge structures of CM type, i.e. rational Hodge structures $(\Vbf,h)$ with $h(\Sbb)\subset\Tbf_\Rbb$ for some $\Qbb$-torus $\Tbf\subset\GLbf_\Qbb(\Vbf)$. Then $\Mbf$ is a neutral $\Qbb$-linear Tannakian category, whose Tannakian group is $\Sfrak$ a $\Qbb$-protorus, called the (connected) Serre group. It is also equipped with canonical homomorphisms $h:\Sbb\ra\Sfrak_\Rbb$ (the universal Hodge structure of CM type), $w:\Gbb_{\mrm,\Qbb}\ra\Sfrak$ (the universal rational weight), and $\mu:\Gbb_{\mrm\Cbb}\ra\Sfrak_\Cbb$ (the universal cocharacter of Hodge filtration). And when we consider the Tannakian subcategory $\Mbf^L$ of Hodge structure of CM type which splits over a number field $L$, we get a $\Qbb$-torus $\Sfrak^L$, and $\Sfrak=\limproj_L\Sfrak^L$. The composition $\mu:\Gbb_{\mrm\Cbb}\ra\Sfrak_\Cbb\ra\Sfrak^L_\Cbb$ descends to $L$.

(2) Note that $\Mbf^\circ$ is the same as the Tannakian category of motives of absolute Hodge cycles associated to CM abelian varieties over $\Qac$. Let $\Mbf$ be the category of motives for absolute Hodge classes generated by Artin motives over $\Qbb$ and the abelian varieties over $\Qbb$ that are of CM type over $\Qac$. This is a neutral Tannakian category with Betti cohomology as the fiber functor, whose corresponding Tannakian $\Qbb$-group is called the Taniyama group $\Tfrak$, whose neutral connected component is $\Sfrak$, and fits into the following exact sequence  of $\Qbb$-groups  $$1\ra\Sfrak\lra\Tfrak\ot{\pi}\lra\Gal(\Qac/\Qbb)\ra 1$$ where $\Gal(\Qac/\Qbb)$ is viewed as a constant pro-finite $\Qbb$-group, identified canonically with $\pi_0(\Tfrak)$.

Moreover the exact sequence above admits a ''splitting'' over $\adele$ $\spl:\Gal(\Qac/\Qbb)\mono\Tfrak(\adele)$ (i.e. $\pi\circ\spl=\id_{\Gal(\Qac/\Qbb)}$). For each $\tau\in\Gal(\Qac/\Qbb)$, $\Sfrak^\tau:=\pi^\inv(\tau)$ is a right $\Sfrak$-torsor, trivialized by the base point $\spl(\tau)\in\Sfrak^\tau(\adele)$.


\end{definition}

From the Serre group we can define conjugates of a mixed Shimura datum by an automorphism of $\Cbb$:

\begin{definition}[cf. \cite{milne-shih-conjugate} Section 4, \cite{milne} Section 6]\label{conjugate-shimura}

(1) Let $(\Gbf,X)$ be a pure Shimura datum, with $(\Tbf,x)$   a toric Shimura datum, and $\tau\in\Aut(\Cbb/\Qbb)$. $(\Tbf,x)$ can be induced from $(\Sfrak,h,\mu)$ through a unique homomorphism $\Sfrak\ra\Tbf$, which induces an action of $\Sfrak$ on $\Gbf$ via $\Sfrak\ra\Gbf$ and the adjoint action. Twist by the right $\Sfrak$-torsor $\spl^\inv(\tau)$ defines a linear $\Qbb$-group $\Gbf^\tau$, which is isomorphic to $\Gbf$ over $\adele$.

Note that the twist by $\spl^\inv(\tau)$ transfers $\Tbf$ into a $\Qbb$-torus $\Tbf^\tau$ in $\Gbf$ which is canonically isomorphic to $\Tbf$ (as the adjoint action is trivial on $\Tbf$).
We put $x^\tau=x\circ \tau:\Sbb\ra\Tbf^\tau$, whose Hodge filtration is given by $\mu_x\circ\tau$, and also put $X^\tau$ to be the $\Gbf^\tau(\Rbb)$-conjugacy class of $x^\tau$. Then $(\Gbf^\tau,X^\tau)$ is a pure Shimura datum, with $(\Tbf^\tau,x^\tau)$ as a toris subdatum. Moreover the reflex field of $(\Gbf^\tau,X^\tau)$ is $\tau(E(\Gbf,X))$.

If we have started with a second toric subdatum $(\Tbf',x')$, then the resulting Shimura datum is isomorphic to $(\Gbf^\tau,X^\tau)$ by a canonical isomorphism. Hence we simply write $(\Gbf^\tau,X^\tau)$ without indicating the choice of toric subdatum $(\Tbf,x)$.

Since the $\Sfrak$-torsors $\spl^\inv(\tau)$ are trivial over $\adele$, we have a canonical isomorphism $\Gbf(\adele)\isom\Gbf^\tau(\adele)$. And we get an isomorphism $\tau(M(\Gbf,X))\ra M(\Gbf^\tau,X^\tau)$ of schemes over $\tau(E(\Gbf,X))$. Take $K\subset\Gbf(\adele)$ \cosg and $K^\tau$ its image in $\Gbf^\tau(\adele)$, we get an isomorphism at finite levels $\tau(M_K(\Gbf,X))\ra M_{K^\tau}(\Gbf^\tau,X^\tau)$.

(2) Let $(\Pbf,\Ubf,Y)$ be a mixed Shimura datum, with $(\Tbf,x)$ a toric subdatum. Then $\Tbf$ extends to a maximal reductive $\Qbb$-subgroup $\Gbf$, hence $(\Tbf,x)$ extends to a maximal pure subdatum $(\Gbf,X)$ of $(\Pbf,\Ubf,Y)$. Then $(\Pbf,\Ubf,Y)$ can be reconstructed from the action of $\Gbf$ on $\Wbf$ the unipotent radical of $\Pbf$, which in turn is determined by two algebraic representations $(\Vbf,\rho_\Vbf)$ and $(\Ubf,\rho_\Ubf)$ plus a $\Gbf$-equivariant bilinear map $\psi:\Vbf\times\Vbf\ra\Ubf$, subject to the conditions in \ref{mixed-shimura-datum}.

Take $\tau\in\Aut(\Cbb/\Qbb)$, we have constructed $(\Gbf^\tau,X^\tau)$ out of $\Sfrak\ra\Tbf\ra\Gbf$ and the $\Sfrak$-torsor $\spl^\inv(\tau)$. Pass further to $\Sfrak\ra\Tbf\ra\Pbf$ through the action of $\Sfrak$ on $\Vbf$ and $\Ubf$ by $\rho_\Vbf$ and $\rho_\Ubf$, we get $\Qbb$-groups $\Pbf^\tau\supset\Wbf^\tau\supset\Ubf^\tau$, and a mixed Shimura datum $(\Pbf^\tau,\Ubf^\tau,Y^\tau)$, where $Y^\tau$ equals the $\Pbf^\tau(\Rbb)\Ubf^\tau(\Cbf)$-orbit of $X^\tau$ in $\Yfrak(\Pbf^\tau)$. $(\Pbf^\tau,\Ubf^\tau,Y^\tau)$ is independent of the choices of toric subdata $(\Tbf,x)$ and of pure sections $(\Gbf,X)$ up to canonical isomorphisms.

We remark that $\Ubf^\tau$ is actually isomorphic to $\Ubf$ as $\Gbf$ acts on $\Ubf$ through a split $\Qbb$-torus, cf.\cite{pink-thesis} 2.14.

Similarly, we have isomorphism of $\tau(E(\Pbf,Y))$-schemes $\tau(M(\Pbf,Y))\isom M(\Pbf^\tau,Y^\tau)$ and isomorphisms at finite levels $\tau(M_K(\Pbf,Y))\isom M_{K^\tau}(\Pbf^\tau,Y^\tau)$, with $K\subset\Pbf(\adele)$ \cosg and $K^\tau$ its image under $\Pbf(\adele)\isom\Pbf^\tau(\adele)$.
\end{definition}

\begin{proposition}\label{functoriality}
(1) Let $(\Gbf,X)$ be a pure Shimura datum, with reflex field $E=E(\Gbf,X)$. Then for $\tau\in\Aut(\Cbb/\Qbb)$, $(\Gbf^\tau,X^\tau)$ is canonically isomorphic to $(\Gbf,X)$ as pure Shimura data.

(2) Let $f:(\Gbf,X)\ra(\Gbf',X')$ be a morphism of pure Shimura data, and take $\tau\in\Aut(\Cbb/\Qbb)$. Then the conjugate by $\tau$ transfers $f$ into $f^\tau:(\Gbf^\tau,X^\tau)\ra(\Gbf'^\tau,X'^\tau)$.

(3) Similar statements hold for mixed Shimura data.
\end{proposition}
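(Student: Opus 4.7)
The strategy is to unwind the Tannakian construction of \ref{conjugate-shimura}, in which $(\Gbf^\tau, X^\tau)$ arises by twisting $(\Gbf, X)$ along the right $\Sfrak$-torsor $\spl^\inv(\tau)$, and to check that this construction (i) trivialises canonically when $\tau$ fixes the reflex field $E$, (ii) is a functor in morphisms of pure Shimura data, and (iii) extends to the mixed case via the rigid Levi-type assembly of $(\Pbf, \Ubf, Y)$ out of its pure base plus representation data.

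For (1), the intended hypothesis is $\tau \in \Aut(\Cbb/E)$, in order to match \ref{conjugate}(1). The defining property of $E$ is that $\Aut(\Cbb/E)$ is exactly the stabiliser in $\Aut(\Cbb/\Qbb)$ of the $\Gbf(\Cbb)$-conjugacy class $[\mu_x]$. Unwinding definitions, once the $\Sfrak$-torsor $\spl^\inv(\tau)$ is pushed forward along $\Sfrak \to \Tbf \to \Gbf$ (the latter factoring through some chosen toric subdatum $(\Tbf, x) \subset (\Gbf, X)$), it becomes an inner-twist $\Gbf$-torsor recording the class $[\mu_x \circ \tau] = [\mu_x] \in \Xbf_\Gbf^\vee(\Cbb)/\Gbf(\Cbb)$; hence it is canonically trivial. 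A trivialisation gives an isomorphism $\Gbf^\tau \cong \Gbf$ of $\Qbb$-groups under which $x^\tau = x \circ \tau$ lies in the same $\Gbf(\Rbb)$-conjugacy class $X$ as $x$, so $X^\tau = X$. Independence of the auxiliary toric subdatum is handled by the identification $\Xbf_\Gbf^\vee(\Cbb)/\Gbf(\Cbb) = \Xbf_\Hbf^\vee(\Cbb)/W(\Gbf_\Cbb, \Hbf_\Cbb)$ of \ref{reflex-field}(c), applied to two toric subdata simultaneously.

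Part (2) is functoriality of the twist: a morphism $f: (\Gbf, X) \to (\Gbf', X')$ carries a toric subdatum $(\Tbf, x)$ to $(f(\Tbf), f \circ x)$, and intertwines $\Sfrak \to \Tbf \to \Gbf$ with $\Sfrak \to f(\Tbf) \to \Gbf'$, so twisting by $\spl^\inv(\tau)$ produces $f^\tau: \Gbf^\tau \to \Gbf'^\tau$ with $f^\tau(x^\tau) = (f(x))^\tau$. Part (3) reduces to the pure case by assembling a mixed Shimura datum $(\Pbf, \Ubf, Y) = (\Vbf *_\psi \Ubf) \rtimes_\rho (\Gbf, X)$ from its pure base together with the $\Gbf$-representations $(\Vbf, \rho_\Vbf)$, $(\Ubf, \rho_\Ubf)$ and the equivariant bilinear map $\psi: \Vbf \times \Vbf \to \Ubf$; twisting by $\spl^\inv(\tau)$ transports each of these data via the $\Sfrak$-action through $\Sfrak \to \Tbf \to \Gbf$, and the resulting $\Pbf^\tau = (\Vbf^\tau *_{\psi^\tau} \Ubf^\tau) \rtimes \Gbf^\tau$ is a mixed Shimura datum. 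The remark $\Ubf^\tau \cong \Ubf$ canonically (cited in \ref{conjugate-shimura}(2)) is used to simplify the identifications.

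The main obstacle will be the canonicity claim in (1). It is not hard to exhibit some isomorphism, but producing one that is canonical and independent of the chosen toric subdatum requires the full Taniyama-group machinery of \cite{milne-shih-taniyama}: one first verifies the statement in the toric case, where $(\Tbf^\tau, x^\tau) \cong (\Tbf, x_\tau)$ with $\mu_{x_\tau} = \mu_x \circ \tau$ (part (4) of \ref{conjugate}), then transports via an auxiliary toric subdatum of the non-toric datum, and finally checks that two such choices yield the same isomorphism through the Weyl-group compatibility of the cocharacter space. Once (1) is settled in this form, (2) and (3) follow as formal consequences of Tannakian functoriality and of the unipotent-assembly recipe in \ref{mixed-shimura-datum-variety}(1).
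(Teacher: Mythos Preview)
Your proposal is correct and follows the same approach as the paper, which for (1) simply cites \cite{milne-shih-conjugate} Section~4 and for (2)--(3) gives exactly the toric-subdatum/Serre-group-twist/Levi-assembly outline you describe. You are also right that the hypothesis in (1) should read $\tau\in\Aut(\Cbb/E)$; as stated with $\tau\in\Aut(\Cbb/\Qbb)$ the claim is false, and the paper's own \ref{conjugate}(1) confirms the intended restriction.
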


\begin{proof}[Sketch of the proof]
(1) See cf.\cite{milne-shih-conjugate} Section 4.

(2) Take $(\Tbf,x)$ a toric subdatum of $(\Gbf,X)$, then its image under $f$ is a toric subdatum $(\Tbf',x'=f\circ x)$ of $(\Gbf',X')$. Conjugate by $\tau$ gives rise to $f^\tau:(\Tbf^\tau,x^\tau)\ra(\Tbf'^\tau,x'^\tau)$. It then suffices to take orbit under the twisted homomorphism $f^\tau:\Gbf^\tau\ra\Gbf'^\tau$ and obtain $f^\tau:(\Gbf^\tau,X^\tau)\ra(\Gbf'^\tau,X'^\tau)$.

Note that the construction of the twisted homomorphism $f^\tau:\Gbf^\tau\ra\Gbf'^\tau$ also uses $f:\Tbf\ra\Tbf'$ and the action of $\Sfrak$ on $\Gbf$ through $\Sfrak\ra\Tbf\ra\Gbf$ resp. on $\Gbf'$ through $\Sfrak\ra\Tbf'\ra\Gbf'$. The resulting morphism $(\Gbf^\tau,X^\tau)\ra(\Gbf'^\tau,X'^\tau)$ only depends on the choice of $(\Tbf,x)\ra(\Tbf',x')$ up to canonical isomorphism.

(3) If $f:(\Pbf,\Ubf,Y)\ra(\Pbf',\Ubf',Y')$ is a morphism of mixed Shimura data, then by choosing $(\Gbf,X)$ a pure section of $(\Pbf,\Ubf,Y)$, we can extend the image $(f(\Gbf),f_*(X))$ into a maximal pure subdatum $(\Gbf',X')$ of $(\Pbf',\Ubf',Y')$, which is a pure section. The morphism $f$ is thus determined by the following data:

(a) a morphism of pure Shimura data $f:(\Gbf,X)\ra(\Gbf',X')$;

(b) algebraic representations $(\Vbf,\rho_\Vbf)$ and $(\Ubf,\rho_\Ubf)$ of $\Gbf$, with an equivariant alternating bilinear map $\psi:\Vbf\times\Vbf\ra\Ubf$, giving rise to the unipotent radical $\Wbf$ with weight filtration characterized by $\Ubf\subset\Wbf$;

(b') similar constructions $(\Vbf',\rho_{\Vbf'})$, $(\Ubf',\rho_{\Ubf'})$, $\psi'$, $\Wbf'$ for $\Gbf'$;

(c) a homomorphism of unipotent $\Qbb$-groups  $\Wbf\ra\Wbf'$, equivariant \wrt $\Gbf\ra\Gbf'$, which is in turn characterized by equivariant maps $\Vbf\ra\Vbf'$, $\Ubf\ra\Ubf'$ compatible with $\psi$ and $\psi'$.

And to construct the conjugate by $\tau$, it suffices to take toric subdata $(\Tbf,x)\subset(\Gbf,X)$, $(\Tbf',x')=(f(\Tbf),f_*(x))\subset(\Gbf',X')$, and repeat the constructions in (2) and \ref{conjugate-shimura}.\end{proof}

\end{document}